\let\oldsection=
\renewcommand{\subsection}[1]{\par\vspace{.18in}\noindent\addtocounter{subsection}{1}\setcounter{equation}{0}{\bf\thesubsection\hspace{9pt}#1}}
\newtheorem{thm}{Theorem}[subsection]
\let\c@fact\c@theorem\makeatother\newtheorem{lem}[thm]{Lemma}
\newtheorem{cor}[thm]{Corollary}
\newtheorem{prop}[thm]{Proposition}
\theoremstyle{definition}
\newtheorem{ques}[thm]{Question}
\newtheorem{rem}[thm]{Remark}
\numberwithin{equation}{subsection}
\numberwithin{thm}{section}
\newcommand{\grExt}{\text{\rm ext}}
\newcommand{\wfa}{\widetilde{\mathfrak a}}
\newcommand{\wgr}{\widetilde{\text{\rm gr}}\,}
\newcommand{\ssC}{{\mathcal C}}
\newcommand{\wrad}{\widetilde{\text{\rm rad}}\,}
\newcommand{\gr}{\text{\rm gr}}
\newcommand{\sB}{{\mathcal {B}}}
\newcommand{\sC}{{\mathscr {C}}}
\newcommand{\Ext}{{\text{\rm Ext}}}
\newcommand{\htt}{\operatorname{ht}}
\newcommand{\op}{{\text{\rm op}}}
\newcommand{\Hom}{\text{\rm Hom}}
\newcommand{\End}{\operatorname{End}}
\newcommand{\ind}{\operatorname{ind}}
\newcommand{\sE}{\operatorname{{\mathscr E}}}
\newcommand{\sO}{{\mathscr{O}}}
\newcommand{\rad}{\operatorname{rad}}
\newcommand{\Dist}{\operatorname{Dist}}
\newcommand{\rDelta}{\Delta^{\text{\rm red}}}
\newcommand{\rnabla}{\nabla_{\text{\rm red}}}
\newcommand{\wA}{{\widetilde{A}}}
\newcommand{\wU}{{\widetilde U}}
\newcommand{\Jan}{\Gamma_{\text{\rm Jan}}}
\newcommand{\wL}{{\widetilde{L}}}
\newcommand{\blist}{\begin{list}{\rom{(\roman{enumi})}}{\setlength
{\leftmarg in}{0em} \setlength{\itemindent}{7ex}
\setlength{\labelsep}{2ex}\setlength{\listparindent}{\parindent}
\usecounter{enumi}}}
\newcommand{\elist}{\end{list}}
\begin{document} 
\begin{abstract} Let $G$ be a simple, simply connected algebraic group over an algebraically closed field
of positive characteristic $p$. In recent work, \cite{PS10}, \cite{PS11} and \cite{PS13}   the authors have studied a graded analogue of the category of rational $G$-modules. These gradings are not natural but are
``forced" on
 related algebras though filtrations, often obtained from appropriate quantum structures. This paper presents new
results on Koszul modules for the graded algebras obtained through this forced grading process. Most of
these results require that the Lusztig character formula holds for all restricted $p$-regular weights, but the paper begins to investigate
how these and previous results might be established when the Lusztig character formula is only assumed to hold on a
proper poset ideal in the
Jantzen region. This opens up the possibility of inductive arguments.
    \end{abstract}

 \title[Some Koszul properties of standard and irreducible modules]{Some Koszul properties of standard and
 irreducible modules}\author{Brian J. Parshall}
\address{Department of Mathematics \\
University of Virginia\\
Charlottesville, VA 22903} \email{bjp8w@virginia.edu {\text{\rm
(Parshall)}}}
\author{Leonard L. Scott}
\address{Department of Mathematics \\
University of Virginia\\
Charlottesville, VA 22903} \email{lls2l@virginia.edu {\text{\rm
(Scott)}}}

\thanks{Research supported in part by the National Science
Foundation}
\maketitle

\section{Introduction} Let $G$ be a simple, simply connected algebraic group over an algebraically closed
field $k$ of characteristic $p>0$.
In a series of recent papers \cite{PS10}, \cite{PS11},  and \cite{PS13}, the authors have obtained results on the modular representation theory of $G$ using new, ``forced grading," methods. This work generally assumed that $p\geq 2h-2$, though \cite{PS12}, which also employed forced
grading techniques, suggests a method for removing that condition. In addition, it was required that the Lusztig character
formula (LCF) hold for all restricted weights. Among the new results obtained by these methods was a verification of a conjecture of Jantzen \cite{Jan} on $p$-Weyl filtrations of Weyl modules, assuming that
the LCF holds (see \cite{PS11}), and, in addition. many new results
on the $\nabla$-filtrations (i.~e., good filtrations) of the $\Ext$-groups (after untwisting) for the first Frobenius kernel $G_1$ of $G$ (see \cite{PS13}). 

  The representation theory of $G$ can be studied by means of quasi-hereditary algebras $A_\Gamma$ attached to a finite
set $\Gamma$ of dominant weights which is a poset ideal in the dominance ordering or the Bruhat-Chevalley ordering. (See \S2.1 below.)  In turn, the algebra $A_\Gamma$ has a (forced) graded version, which we
denote $B:=\wgr A_\Gamma$, which is, remarkably, also quasi-hereditary. In addition, when the LCF holds
for all restricted weights, the algebra $B$ has been shown to be a Q-Koszul algebra in the sense
of \cite[Defn. 3.6]{PS13}. Koszul algebras are themselves examples of Q-Koszul algebras, and, when
$\Gamma$ is contained inside the Jantzen region  Let $G$ be a simple, simply connected algebraic group over an algebraically closed field
of positive characteristic $p$. In recent work,    the authors have studied a graded analogue of the category of rational $G$-modules. These gradings are not natural but are
``forced" on
 related algebras though filtrations, often obtained from appropriate quantum structures. This paper presents new
results on Koszul modules for the graded algebras obtained through this forced grading process. Most of
these results require that the Lusztig character formula holds for all restricted $p$-regular weights, but the paper begins to investigate
how these and previous results might be established when the Lusztig character formula is only assumed to hold on a
proper poset ideal in the
Jantzen region $\Jan:=\{\lambda\in X(T)_+\,|\,(\lambda+\rho,\alpha^\vee_0)\leq p(p-h+2)\}$. This opens up the possibility of inductive arguments., $B$ turns out to be Koszul itself. But when $\Gamma$ moves outside the Jantzen region, Koszulity generally fails. However, $B$ remains Q-Koszul, and the notion of Q-Koszulity nicely captures some of
the homological algebra of $G$, even for modules parameterized by dominant weights far from 0.  

In the representation
theory of Koszul algebras, there is an important notion of a linear (or ``Koszul") graded module, one which
has a particularly nice minimal projective resolution. This concept readily extends to the case of Q-Koszul
algebras, and the results of \S4 provide new examples of these ``Q-linear" modules, including, Q-analogues of maximal submodules of standard modules, or of any Q-linear module. In the Koszul case, we prove maximal
submodules of linear modules often have stronger resolution properties, depending on the strong linearity
of the original module.

One general aim of the project, of which this paper is a part, has been to keep conclusions, for a given algebra $A_\Gamma$
or $\wgr A_\Gamma$ or its representation theory, expressed solely in terms of $\Gamma$. This is
desirable not only for aesthetic reasons, but for applications of the theory in {\it inductive} arguments. The papers
listed above all fit this framework with the exception of \cite{PS13}, where hypotheses ``external to
$\Gamma$" were required.
For instance, it was necessary throughout most of paper \cite{PS13} to deal with primes
$p$ sufficiently large so that LCF held for all irreducible modules $L(\gamma)$ with $\gamma$ a $p$-regular restricted weight. 

Accordingly, we consider in this paper the possibility of eliminating such external hypotheses,
and we make some progress in this direction. For example, for the poset ideals $\Gamma$
considered\footnote{ We often consider only ``stable" posets
$\Gamma$, those such that whenever $\gamma=\gamma_0 + p\gamma_1\in\Gamma$, with $\gamma_0$
restricted and $\gamma_1$ is dominant, it also true that $\gamma_0\in\Gamma$. This is not a strong
requirement.} in the key Theorem \ref{maintheorem}, when the LCF is assumed, it is only for characters of irreducible modules $L(\gamma)$, when
$\gamma\in\Gamma$ is a $p$-restricted dominant weight.  Nevertheless, this theorem shows that the irreducible
modules $L(\gamma)$, $\gamma\in\Gamma$, behave homologically as if the (regular part of the) restricted enveloping algebra
were Koszul (a known consequence of the LCF \cite{AJS}). We apply Theorem \ref{maintheorem} in \S6. Here we achieve
similar ``relative to $\Gamma$" versions of the other results of \S\S3,4, though at the cost of assuming that
$p$ is fairly large (but not huge). See the discussion at the end of \S6. 

In addition, though we continue here to use $p$-regular weights, the approach of this paper, together with the others above, 
could be applied in the singular case, once some basic questions have been answered. One of these
simply asks for a small $p$ quantum version (in characteristic 0, at a primitive $p$th root of 1) of Riche's Koszulity theorem \cite{Riche}, which applies for
all weights. 
This is explicitly stated in \S5. 

Finally, in the course of proving Theorem \ref{maintheorem},  it was necessary to sharpen the deformation result
in \cite[Thm. 8.1]{PS11} which provided an integral form for the Koszul grading on the $p$-regular part of the
small quantum enveloping algebra. See Theorem \ref{thm2.2} which shows that any grade in the form is a
direct sum of its intersections with weight spaces. This result is interesting in its own right. The proof uses a corresponding result for the small quantum group itself in \cite[\S18.21]{AJS}, and we elaborate
on part of its proof in \S8 (Appendix).

\section{Preliminaries}  Unless otherwise noted, algebras over a field are assumed to be finite
dimensional.  Likewise, modules are generally taken to be finite dimensional. Let $k$ be an algebraically
closed field, generally of positive characteristic $p$. In dealing with quantum
enveloping algebras at a primitive $p$th root $\zeta$ of unity, we will need a fixed $p$-modular system $(K,\sO,k)$. Thus, $\sO$ will be a DVR with
maximal ideal ${\mathfrak m}= (\pi)$, fraction field $K$ and residue field $k=\sO/{\mathfrak m}$. It will
be assumed that $\pi=\zeta-1$. For more details, see \cite[\S2]{PS11}.

\subsection{Some standard notation.} Let $G$ be a fixed simple, simply connected algebraic group defined
over an algebraically closed field $k$ of positive characteristic. We generally (but not always) follow the notation listed in \cite[pp. 569--572]{JanB}.

Let $T$ be a fixed maximal split torus and let $R$ be the root system of $G$. Fix a set $R^+$ of positive roots corresponding to a Borel subgroup $B^+\supset T$, and let $B=B^-\supset T$ be the opposite Borel
subgroup. Regard the weight lattice $X(T)$ of $T$ as a poset by putting $\mu\leq\lambda$ if and only if
$\lambda-\mu$ is a sum of positive roots. By restricting $\leq$ to any subset $\Xi$ of weights, $\Xi$
is also a poset. A stronger partial order on $X_{\text{\rm reg}}(T)_+$ is sometimes useful (as in the
proof of Theorem \ref{maintheorem}). Namely, given a $p$-regular dominant weight, write $\gamma=w\cdot\gamma_0$
for a unique $w\in W_p$ (the affine Weyl group) and $\gamma_0\in C^+$ (the bottom dominant $p$-alcove). In this way, the intersection of any orbit with $X_{\text{\rm reg}}(T)_+$  identifies with a subset of $W_p$, and we partially order $X_{\text{\rm reg}}(T)_+$ by using
the Bruhat-Chevalley partial order on $W_p$.  

Let $\sC=G$-mod be the category of finite dimensional rational $G$-modules. If $\gamma\in X(T)_+$, then
$L(\gamma)$ (resp., $\Delta(\gamma)$, $\nabla(\gamma)$) denotes the irreducible (resp., Weyl module, dual
Weyl module) of highest weight $\gamma$. If $\Gamma$
is a set of dominant weights, let $\sC[\Gamma]$ be the full subcategory of $\sC$ generated by the irreducible
modules $L(\gamma)$ of highest weight $\gamma\in\Gamma$. If $\Gamma$ is a finite poset ideal in $X(T)_+$ or in $X_{\text{\rm reg}}(T)_+$, then $\sC[\Gamma]$ is a highest weight category with weight poset $\Gamma$. Here $\Gamma$ can
be taken to be a poset using the dominant partial order $\leq$, or any stronger order (e.~g., the Bruhat-Chevalley order, or the $\preceq$ order below.)

Let $\Gamma$ a finite ($\not=\emptyset$) poset ideal in the set $X_{\text{\rm reg}}(T)_+$ of $p$-regular dominant weights,
 satisfying the additional property that,
 if $\gamma=\gamma_0+p\gamma_1\in \Gamma$ with $\gamma_0\in X_1(T)$ (the $p$-restricted dominant weights) and $\gamma_1\in X(T)_+$, then $\gamma_0\in \Gamma$. In this case, $\Gamma$ is called {\it stable}.
For example, write $\lambda\preceq\mu$ if and only if $\mu-\lambda\in {\mathbb Q}^+
R^+$.  Then if $\Gamma$ is a $\preceq$-ideal, it is stable. 

Let $\Dist(G)$ be the distribution algebra of $G$. For an poset ideal $\Gamma$ in the poset of $p$-regular weights,  let $I_\Gamma$ be the annihilator in $\Dist(G)$
of the category $\sC[\Gamma]$. Then $A_\Gamma:=\Dist(G)/I_\Gamma$ is a quasi-hereditary algebra with
weight poset $\Gamma$ such that $A_\Gamma{\text{\rm --mod}}\cong\sC[\Gamma]$. If $u'$ denotes the sum of the regular
block subalgebras of the restricted enveloping algebra $u$ of $G$, there is a natural homomorphism $u'\to A_\Gamma$
arising because $u$ is a subalgebra of $\Dist(G)$.   If $M$ is a (finite dimensional) $A_\Gamma$-module, it can thus be regarded as
a module for $u$, and hence as a $u'$-module, usually
denoted $M|_{u'}$ or just $M$.

\subsection{The length function.} We will need a ``length function" $\ell:X_{\text{\rm reg}}(T)\to\mathbb Z$ defined, as described below, on the set
of $p$-regular weights.  For this, we follow  \cite[(3.12.3a)]{IKL},
using Lusztig's alcove distance function.   That is, write $\tau
=z\cdot\lambda$, for $\lambda\in C^{+}$, and set $\ell(\tau):=d(C^{+},z\cdot
C^{+})$, which counts, with signs, the number of alcove geometry hyperplanes separating
$C^{+}$ and $z\cdot C^{-}.$  (A ``$+1$" contribution occurs for a hyperplane separating
alcoves $A,B$, in computing $d(A,B)$, when $A$ is on the negative side of the
hyperplane, and a ``$-1$" contribution is used in the opposite case.)  In general,
$\ell(\tau)\not =\ell(z)$ (the Bruhat-Chevalley length for the Coxeter group
$W_{p}$) but the two lengths do agree if $z$ is dominant, i.~e., $z\cdot
C^{+}$ is a dominant alcove. If $z$ is any element of the affine Weyl group $W_p$
or extended affine Weyl group $\widetilde W_p$ associated to $G$, written as the composition $t_{p\theta}w$ of a
translation by $p\theta$ with $\theta\in X(T)$ and $w\in W$ (the Weyl
group), then \cite[Lemma 3.12.5]{IKL}

\[
\begin{aligned}
d(C^{+},z\cdot C^{+})  & =-\ell(w)+2\htt(\theta)\text{, where}\\
2\htt(\theta)  & =%
{\textstyle\sum\limits_{\alpha>0}}
(\theta,\alpha^\vee).
\end{aligned}
\]
When $\theta$ is in the root lattice ${\mathbb Z}R$, the (integer) expression
$2\htt(\theta)$ is an even integer. (This is an easy calculation with the dual
root system.) Consequently, for $\theta\in{\mathbb Z}R$ and $\tau\in
X_{\text{\rm reg}}(T)$,

\begin{equation}
\label{length}
\ell(\tau)\equiv\ell(\tau+p\theta) \,\,\mod 2
\end{equation}

\subsection{The category of $G_1T$-modules.} Following a notation used \cite{AJS},
let $\ssC_k$ be the category of finite dimensional rational $G_1T$-modules. It is a highest weight category
with weight poset $X(T)$. For $\gamma\in X(T)$, let $L_k(\gamma)$ be the irreducible $G_1T$-module of highest weight $\gamma$. In case $\gamma\in X(T)_+$, write $\gamma=\gamma_0+p\gamma_1$ with
$\gamma_0\in X_1(T)$ (the restricted weights) and $\gamma_1\in X(T)_+$. Then $L_k(\gamma)\cong L(\gamma_0)|_{G_1T}\otimes
p\gamma_1$. When $\gamma=\gamma_0$, we usually denote $L_k(\gamma)$ simply by $L(\gamma)$.

 Suppose that $\Omega$ is a union of $W_p$-orbits in $X(T)$. Let $\ssC_k(\Omega)$ be the full subcategory of $\ssC_k$ generated by the irreducible modules $L_k(\gamma)$, $\gamma\in\Omega$. Then $\ssC_k(\Omega)$ is a highest weight category with standard (resp., costandard) modules
denoted $Z_k(\gamma)$ (resp., $Z_k'(\gamma)$) defined by 
\begin{equation} \begin{cases} (1)\,\, Z_k'(\gamma):=\ind_{B_1T}^{G_1T}\gamma; \\ (2)\,\,Z_k(\gamma):=
\ind_{B^+_1T}^{G_1T}(\gamma -2(p-1)\rho).\end{cases}\end{equation}
The category $\ssC_k$ has a natural duality $D$ and $Z_k(\gamma)\cong DZ'_k(\gamma)$.  

 Now assume that $p>h$. Fix a
$p$-regular weight $\lambda\in C^+$.  

A weight $\lambda\in X_{\text{\rm reg}}(T)_+$ will be said to satisfy the Kazhdan-Lusztig property (with respect to the length function $\ell$, defined
in \S2.2),  provided that
\begin{equation}\label{KLproperties} \forall \mu\in X(T), n\in{\mathbb N}, \quad
  \begin{cases} \Ext^n_{G_1T}(L(\lambda),Z_k'(\mu))\not=0\implies n\equiv \ell(\lambda)-\ell(\mu)\,\,\mod \,2; \\ \Ext^n_{G_1T}(Z_k(\mu),L(\lambda))\not=0\implies n\equiv \ell(\lambda)-\ell(\mu)\,\,\mod \,2.\end{cases}
\end{equation}
In fact, it is enough to check this for $\mu\in W_p\cdot\lambda$.

\subsection{The modules $\rDelta(\lambda)$, $\rnabla(\lambda)$; character formulas.} Given $\gamma=
\gamma_0+p\gamma_1\in X(T)_+$, with $\gamma_0\in X_1(T)$  and $\gamma_1\in X(T)_+$, define 
\begin{equation}\label{pmodules}
\begin{cases}\Delta^p(\gamma):=L(\gamma_0)\otimes\Delta(\gamma_1)^{[1]};\\ 
\nabla_p(\gamma):=
L(\gamma_0)\otimes \nabla(\gamma_1)^{[1]}.\end{cases}\end{equation}
 (Given $V\in\sC$, $V^{[1]}\in\sC$ denotes the twist of
$V$ through the Frobenius morphism $F:G\to G$.) The module $\Delta^p(\gamma)$ (resp., $\nabla_p(\gamma)$) is a homomorphic image (resp., submodule) of $\Delta(\gamma)$ (resp., $\nabla(\gamma)$)
and so is indecomposable with head (resp., socle) $L(\gamma)$. 

There is another family of rational $G$-modules, denoted $\rDelta(\gamma)$ and $\rnabla(\gamma)$, $\gamma\in\Gamma$, which are
closely related to the modules above. These modules are obtained, by a standard ``reduction mod $p$,"
from the irreducible type 1 modules $L_\zeta(\gamma)$, $\gamma\in X(T)_+$, for the quantum enveloping
algebra $U_\zeta$ associated to the root system $R$ at a primitive $p$th root of unity $\zeta$. This is
described in detail in \cite[\S2]{PS11}, which contains other references to the literature. We do not
repeat this, except to let $\widetilde U_\zeta$ be the Lusztig $\sO$-form associated to
$R$ in which each $K_i^p=1$.  Then $\wU_\zeta\otimes_{\sO}K\cong U_\zeta$. If $\overline U_\zeta:=\wU_\zeta/\pi\wU_\zeta$ and if $I$ is the ideal of generated by the $K_i-1$, $i=1,\cdots, n$, where $n$ is the
rank of $R$, then $\overline U_\zeta/I \cong \Dist(G)$. In this way, $U_\zeta$-modules which are 
integrable and of type 1, can be ``reduced mod $p$" to obtain rational $G$-modules. Thus, given
$\gamma\in X(T)_+$, $\rDelta(\gamma)$ (resp., $\rnabla(\gamma)$) is the rational $G$-module obtained
by reduction mod $p$ of the irreducible module $L_\zeta(\gamma)$ for $U_\zeta$ using a minimal (resp.,
maximal) admissible lattice.
  (Sometimes, it will be convenient
to write $L_K(\lambda)$ for $L_\zeta(\lambda)$, $\Delta_K(\lambda)$ for $\Delta_\zeta(\lambda)$, etc.)

In this paper, we will say that  the Lusztig character formula (LCF) holds for a finite poset ideal $\Gamma$ of $p$-regular dominant
weights provided that 
\begin{equation}\label{LCF}\rDelta(\gamma)=\Delta^p(\gamma),\,\,\forall\gamma\in\Gamma.\end{equation}
 (Equivalently, this means that 
 $\rnabla(\gamma)=\nabla_p(\gamma)$, for all $\gamma\in \Gamma$.) By
\cite[Cor. 2.5]{PS11}, if $\Gamma$ is the ideal generated by the $p$-regular restricted weights, then
this condition implies that the characters of the irreducible modules $L(\gamma)$, for $\gamma\in X_1(T)$
are all given by Lusztig's character formula \cite{L}. If the poset $\Gamma$ is stable in the sense of
\S2.1, this just means that (\ref{LCF}) holds for $\gamma\in\Gamma$ a restricted dominant weight or,
equivalently , $L_\zeta(\lambda)$ and $L(\lambda)$ have the same dimension for $\lambda\in\Gamma$
restricted. Observe the existence of $p$-regular weighs means that $p\geq h$. Often, we will assume
that $p$ is larger, e.~g., $p\geq 2h-2$.

Finally, we say  the ``LCF holds" (not mentioning any poset) to mean that (\ref{LCF}) holds for
all $p$-regular dominant weights, or, equivalently, for all restricted dominant weights.\footnote{This formulation
makes sense for $p\geq h$, though generally we assume $p>h$, where \cite[p. 273]{T} guarantees that
each $L_\zeta(\lambda)$ satisfies Lusztig's character formula for all dominant weights $\lambda$.}

 \subsection{Grading the restricted enveloping algebras.}
The category of rational $G_{1}$-modules is equivalent to the category of modules for
the restricted enveloping algebra $u$ associated to $G.$ (For this reason, we freely identify
$G_1$-modules with $u$-modules in the discussion.) The maximal torus $T$
acts rationally (via the adjoint action) on $u$ as automorphisms $x\mapsto {^{t}x}$ ($t\in T, x\in u$).
This action induces a familiar (weight space) decomposition $u=\bigoplus u_{\nu }$
in terms of all $\nu \in {\mathbb Z}R \cup \{0\}\subseteq X$ where $X:=X(T)$. 
More generally, every rational $T$-action  on a finite dimensional vector
space $V$ over $k$ has a (direct sum) decomposition $V=\bigoplus _{\tau \in X}V_{\tau }$. There
is, of course, a converse, but that is not what we wish to emphasize. 
Following the work of Andersen-Jantzen-Soergel in \cite[Appendix E]{AJS}, any
decomposition $V=\bigoplus _{\tau \in X}V_{\tau }$ is called an $X$--grading on a vector space $V$. This
makes sense for any abelian group $X$, though we focus on the special case $
X=X(T).$ From this point of view, a $G_{1}T-$module is a $u$-module
equipped with an $X-$grading that satisfies certain compatibility
conditions. These are the multiplication  conditions $u_{\nu }V_{\tau
}\subseteq V_{\nu +\tau }$ $(\nu ,\,\,\tau \in X)$ and another
requirement.\footnote{Namely, it is required that $h_\alpha\cdot v=(\tau,\alpha^\vee)v$ for $\tau\in X$, $v\in 
V_\tau$.}  which takes into account the fact that part of $T$ is
inside $G_1$. See \cite[2.4]{AJS}, which also gives a discussion
for the analogous quantum situation (at a root of unity) using the $X$-grading terminology.\footnote{In the
quantum case, it is required that $K_\alpha\cdot v=\zeta^{(\tau,\alpha^\vee)} v$, $v\in (u_\zeta)_\tau$. The small
quantum group $u_\zeta$ is also $X$-graded.} This gives, among other things, a useful uniformity
of terminology.   We are particularly concerned with (positive) 
$\mathbb{Z}$-gradings on either $u_\zeta$ and $u$ which might be compatible with their respective 
$X$--gradings. (To say that a space or algebra $V$ over $k$ with an $X$-grading
has a compatible $Z$-grading $V=\bigoplus _{n\in \mathbb{Z}}V_{n}$ just means
that each $V_n$ is the sum of its intersections with the various spaces 
$V_\tau$. This is equivalent to the compatibility notion  in  
\cite[F.8]{AJS}.)  Also,  \cite[18.21]{AJS} observes that every block algebra
component $\sB$ of either algebra carries a natural $X$-grading. The same subsection shows in 
\cite[Prop. 18.21 \& Rem. 18.21(2)]{AJS}, under the validity of the LCF, that these block
algebras, when $p$-regular, carry a compatible Koszul grading.\footnote{In fact, in the quantum cases, $p$ is not required
to be a prime, but does need to satisfying some other conditions---see \cite[p. 231]{AJS}---all of which hold if
$p$ is a prime $>h$.} (The regularity
requirement just means that the irreducible modules in the block are
parameterized by $p$-regular weights.) We will need to quote this result and its proof 
 in the proofs of Theorem \ref{thm2.1} and Theorem \ref{thm2.2} below.\footnote{The brief proof given in 
\cite[18.21]{AJS} ignores the nontrivial relationship between the $X$-weights on 
$\sB$ and those arising when $\sB$ is considered as an
endomorphism algebra.  We supply the needed discussion in the Appendix below and
explain how it completes the proof.} The following result for the quantum
case is essentially a special case (for $p$ a prime) of this result of \cite[\S18.21]{AJS}.

\begin{thm}\label{thm2.1}
Suppose $p>h$ is a prime and $\zeta $ is a primitive $p$th root of unity. Then
any regular block $\sB$ of $u_{\zeta }$ has a Koszul
grading compatible with its $X$--grading.\end{thm}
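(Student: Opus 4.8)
The plan is to derive the statement from \cite[Prop.~18.21 and Rem.~18.21(2)]{AJS}, specialized to a prime $p>h$, and then to supply the point that is left implicit there: that the Koszul $\Z$-grading produced by that argument can be taken compatible with the $X$-grading on $\sB$ arising from the adjoint action of $T$.

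First I would record the input hypothesis. Since $p>h$, \cite[p.~273]{T} guarantees that every irreducible type-$1$ module $L_\zeta(\lambda)$ for $U_\zeta$ satisfies Lusztig's character formula for all dominant $\lambda$; equivalently, the quantum analogue of the Kazhdan--Lusztig property (\ref{KLproperties}) holds for all $p$-regular weights. This is exactly the situation treated in \cite[\S18.21]{AJS}, and that reference shows that each $p$-regular block algebra $\sB$ of $u_\zeta$ carries a $\Z$-grading making it a Koszul algebra; here we need only the case $p$ prime, so the numerical side conditions on $p$ of \cite[p.~231]{AJS} are automatic. This produces a Koszul grading on $\sB$.

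The remaining issue is compatibility with the $X$-grading, in the sense of \S2.5 that each $\Z$-graded piece of $\sB$ is the sum of its intersections with the weight spaces $\sB_\tau$. The grading of \cite[\S18.21]{AJS} is obtained by realizing $\sB$, up to Morita equivalence, as the endomorphism algebra of a suitable module in the quantum $G_1T$-category; as that module is itself $X$-graded and the whole construction in \cite{AJS} takes place in the $X$-graded setting, the resulting $\Z$-grading is compatible with the $X$-grading that $\sB$ carries through this endomorphism-algebra realization. To finish, one must know that this $X$-grading coincides, under the Morita equivalence, with the intrinsic $X$-grading on $\sB$. I would establish this by passing to a basic algebra: because $T$ is linearly reductive over $k$, the unit of $\sB$ decomposes $T$-equivariantly into primitive idempotents, yielding a basic algebra $A$ that is $X$-graded with $A\text{-mod}\cong\sB\text{-mod}$ compatibly with the $T$-actions, and one then checks, functor by functor, that the chain of identifications of \cite[\S18.21]{AJS} between $A$ and the basic version of the endomorphism algebra is $T$-equivariant. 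This verification is carried out in the Appendix (\S8), where we also explain how it completes the proof of \cite[\S18.21]{AJS}.

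The main obstacle is exactly this reconciliation of the two $X$-gradings: the proof in \cite[\S18.21]{AJS} does not track the adjoint $T$-action through the passage from $\sB$ to an endomorphism algebra, and doing so means following $T$-equivariance through each step of the machinery of \cite{AJS} --- the projective covers, the baby Verma objects, and the tilting-type modules involved all lie in the $G_1T$-category, so every structure map respects the $X$-grading, but this has to be stated explicitly and matched with the weight bookkeeping on the combinatorial side. The only other point, the $T$-equivariant choice of idempotents, is routine since $T$ is linearly reductive, so rational $T$-module decompositions are exact and idempotents lift $T$-equivariantly.
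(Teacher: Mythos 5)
Your proposal follows the paper's own route at the top level: invoke \cite[p.~273]{T} for the validity of the LCF in the quantum case for all $p>h$, deduce the Koszul grading on each regular block from \cite[Prop.~18.21 \& Rem.~18.21(2)]{AJS}, and then isolate exactly the point the paper also flags --- that the $\Z$-grading there is produced through an endomorphism-algebra realization of $\sB$, and its compatibility with the intrinsic $X$-grading (adjoint $T$-action) is not immediate --- deferring that verification to an appendix. Where you differ is in the proposed mechanism for that verification: you would pass to a $T$-equivariantly chosen basic algebra and check $T$-equivariance of the identifications functor by functor, whereas the paper's Appendix works inside the $Y$-category formalism of \cite[Appendix E]{AJS} with $Y=p\Z R$, builds a projective $Y$-generator $P'_\Omega=\bigoplus_{\lambda\in X_1'}\Phi_\Omega(\lambda)$ from coset representatives $X_1'$ chosen inside a single coset $\omega+\Z R$, and recovers each $X$-weight space of a module as $\Hom_{{\mathcal C}_k(\Omega)}(\Phi_\Omega(\lambda),M\langle -p\theta\rangle)$, then upgrades this to the $Y\times\Z$-graded lifts $\widetilde\Phi_\Omega(\lambda)$ to get $X$-compatibility of the $\Z$-grading. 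Be aware that your phrase ``the whole construction takes place in the $X$-graded setting, so the resulting $\Z$-grading is compatible'' glosses over the precise difficulty the paper emphasizes: in the graded-endomorphism formalism the grading group for a block is $Y$ (or $pX$), and the $pX$-grades of $\Hom^\sharp(P,M)$ lump together many $X$-weight spaces, so $X$-compatibility cannot simply be read off from \cite[Appendix E]{AJS}; your equivariant-idempotent plan could plausibly be made to work, but it would still have to confront this coarsening, which is what the paper's choice of $X_1'\subseteq\omega+\Z R$ and the shift-by-$-p\theta$ bookkeeping (or, alternatively, the $X\times\Z$-category built from the decomposition in \cite[18.20]{AJS}) is designed to handle.
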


\begin{proof}
Given that the LCF always holds at the $p$th roots of unity
quantum case  as long as $p>h$ (see \cite{T}), the theorem is an immediate
consequence of \cite[Prop.18.21 \& Rem. 18.21(2)]{AJS}.
\end{proof}

As an easy consequence, the same theorem holds if $
\sB$ is replaced by the sum $u'_\zeta$ of
all regular block components of the algebra $u_\zeta.$ Let $(K,\sO,k)$ be the $p$-modular
system of \S2.4. The usual $\sO$-form $\widetilde{u}_\zeta'$ of $u'_\zeta$  has a positive grading that
base changes (i.~e., by applying $-\otimes_\sO K$) to a Koszul grading on $u'_\zeta$ \cite[Thm. 8.1]{PS12}.
The latter result states that the Koszul grading on $u'_\zeta$ is that obtained in \cite[\S\S17--18 \& p. 231]{AJS}.
The reference to p. 231 implicitly refers to Conjecture 2 on that page which mentions the X-grading compatibility, established
for all regular blocks of $u'_\zeta$.  The result in \cite{AJS} which exhibits such an $X$-grading
is \cite[Prop. 8.21 \& Rem. 18.21(2)]{AJS}. The reader may confirm by comparing the proof of this latter
result with that of \cite[Thm. 8.1]{PS12} that they use the same Koszul grading on $u'_\zeta$. 
The discussion of $X$-gradings versus $Y$-gradings ($Y:=p{\mathbb Z}R$) above \cite[Prop. 18.21]{AJS}  can be replaced by the Appendix to this paper.

Since
the grading on $\widetilde u_\zeta'$ given in \cite[\S8]{PS12} base changes to
that on $u'_\zeta$, it can itself be obtained by taking intersections
with the grading on $u'_\zeta$. This is also true for its natural $X$-grading. This
proves the analog of Theorem \ref{thm2.1} stated below as Theorem \ref{thm2.2} for $\widetilde u_\zeta'$. The algebra $u'$ is the sum of all regular blocks of $u$, and it
it is the reduction mod $p$ of $\widetilde u_\zeta'.$

\begin{thm}\label{thm2.2}
Suppose $p>h$ is a prime and $\zeta $ is a primitive $p$th root of unity.
Then the algebra $\widetilde u_\zeta'$ has a positive integer grading, compatible with its $X$-grading, which bases changes to a Koszul grading on $u'_\zeta$ also compatible with its $X$-grading. Applying $-\otimes_\sO k$, this grading on $\widetilde u'_\zeta$ also base changes to a
positive grading on $u'$ compatible with its $X$-grading (as induced by the adjoint action of $T$).
\end{thm}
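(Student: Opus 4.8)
The plan is to deduce Theorem \ref{thm2.2} from Theorem \ref{thm2.1} (equivalently, from \cite[Prop. 18.21 \& Rem. 18.21(2)]{AJS}, completed via the Appendix) together with the integral deformation result \cite[Thm. 8.1]{PS12}, using throughout the principle that compatibility of a $\Z$-grading with an $X$-grading is an ``intersection with weight spaces'' condition, and that such conditions descend from a base-changed algebra to an $\sO$-form sitting inside it.

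First I would pass from a single regular block $\sB$ of $u_\zeta$ to the full sum $u'_\zeta$ of all regular blocks. Since $u'_\zeta=\bigoplus\sB$ as algebras, and the $X$-grading of $u_\zeta$ respects this block decomposition, the block-by-block Koszul gradings supplied by Theorem \ref{thm2.1}, each compatible with the $X$-grading on $\sB$, assemble to a Koszul grading on $u'_\zeta$ compatible with its $X$-grading. The essential point --- to which the Appendix is devoted --- is that this grading is precisely the one constructed in \cite[\S\S17--18]{AJS}, and hence precisely the grading to which the positive grading of the $\sO$-form $\widetilde u'_\zeta$ in \cite[Thm. 8.1]{PS12} base changes under $-\otimes_\sO K$; the $X$-grading compatibility is exactly \cite[Rem. 18.21(2)]{AJS}, read together with the Appendix's reconciliation of the $X$-weights on a block $\sB$ with those arising when $\sB$ is realized as an endomorphism algebra.

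Granting this identification, the rest is formal. Each graded piece $(\widetilde u'_\zeta)_n$ is a finitely generated, torsion-free, hence free $\sO$-submodule of $\widetilde u'_\zeta$ whose $K$-span is the Koszul graded piece $(u'_\zeta)_n$, so $(\widetilde u'_\zeta)_n=\widetilde u'_\zeta\cap(u'_\zeta)_n$; likewise each $X$-weight space of $\widetilde u'_\zeta$ is its intersection with the corresponding weight space of $u'_\zeta$. Compatibility on $u'_\zeta$ says that every $(u'_\zeta)_n$ is the sum of its intersections with the $X$-weight spaces; intersecting the whole configuration with $\widetilde u'_\zeta$ gives the same statement for $\widetilde u'_\zeta$, which is the first assertion of the theorem.

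For the last assertion, apply $-\otimes_\sO k$. As recalled above, $u'\cong\widetilde u'_\zeta/\pi\widetilde u'_\zeta$, and since each $(\widetilde u'_\zeta)_n$ is $\sO$-free the $\Z$-grading descends to a positive grading $u'=\bigoplus_n(u')_n$ with $(u')_n=(\widetilde u'_\zeta)_n\otimes_\sO k$. The $X$-grading on $u'$ induced by the adjoint $T$-action is the reduction mod $\pi$ of the $X$-grading on $\widetilde u'_\zeta$, and exactness of $-\otimes_\sO k$ on the finite direct sums in play carries the sum-of-intersections relation to the quotient; thus the grading on $u'$ is compatible with its $X$-grading. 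I expect the one genuine difficulty to be the step identifying the \cite{AJS} and \cite{PS12} Koszul gradings, and within that the careful bookkeeping of $X$-weights when a block is viewed as an endomorphism algebra --- the gap flagged in the footnote above and supplied in the Appendix; everything downstream of that is the routine base-change and intersection argument sketched here.
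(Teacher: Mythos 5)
Your proposal is correct and follows essentially the same route as the paper: assemble Theorem \ref{thm2.1} over all regular blocks, identify the Koszul grading of \cite[\S\S17--18]{AJS} (with its $X$-compatibility, via the Appendix) with the one in \cite[Thm. 8.1]{PS12} that lifts to $\widetilde u'_\zeta$, recover the grading and $X$-grading on $\widetilde u'_\zeta$ as intersections with those on $u'_\zeta$, and then reduce mod $\pi$ using $\sO$-freeness. Your write-up simply makes explicit the formal intersection and base-change steps that the paper leaves terse.
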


In the statement of the theorem, there is no claim about the Koszulity of the positive grading on $u'$, though
this will be true (as follows from \cite{AJS}) when the LCF holds for $p$-restricted weights.

\section{A review of some earlier results} In this section, we briefly review some results obtained in \cite{PS11} and \cite{PS13}. For convenience, let $\wfa=\widetilde u_\zeta'$, and for any integer $n\geq 0$, define
$\wrad^n\wfa:=\wfa\cap\rad^n\wfa_K$. Then we set, for any $\wU_\zeta$-module $M$,
\begin{equation}\label{wgr}\wgr M:=\bigoplus_{n\geq 0}(\wrad^n\wfa)M/(\wrad^{n+1}\wfa)M.\end{equation}
In particular, we can take $M:=\wfa$, to obtain an algebra $\wfa$ over $\sO$, and, for any $M$, $\wgr M$
is naturally a $\wgr \wfa$-module. More generally, 
let $\Gamma$ be a finite poset ideal of $p$-regular dominant weights. If $p\geq 2h-2$,  by  \cite[Thm. 6.1]{PS10}, the
graded algebra $\wgr A_\Gamma$ is quasi-hereditary with weight poset $\Gamma$.  Additionally, $\wgr A_\Gamma$ has standard (or Weyl) modules of the form $\wgr \Delta(\gamma)$, $\gamma\in\Gamma$.\footnote{The notation in \cite{PS10} is slightly different than that used here in that we write $\wgr A_\Gamma$ more
simply as $\gr A_\Gamma$ (which has the danger of being confused with the radical series of $A_\Gamma$,
from which it may differ. Also, \cite{PS10} proves a much stronger result which states that the quasi-hereditary algebra $\wgr A_\Gamma$ arises through base change from a quasi-hereditary algebra $\wgr \wA_\Gamma$
over $\sO$.  We will not need that here.} These were studied in \cite{PS13} under stronger hypotheses
which we will assume here:
 
\begin{quote}
{\it A
standing hypotheses in the remainder of this section is that the  LCF (as recast in (\ref{LCF}))  holds and that $p\geq 2h-2$ is odd. }Throughout,
$\Gamma$ will be a fixed ideal of $p$-regular dominant weights.\end{quote}

\begin{thm}\cite[\S5]{PS11}\label{Weyl} Given any dominant weight $\lambda$, the Weyl module $\Delta(\lambda)$ has a filtration 
by $G$-submodules with corresponding sections of the form $\Delta^p(\gamma)$, $\gamma\in X(T)_+$. In case $\lambda$
is $p$-regular, this filtration can be taken to be compatible with the $G_1$-radical series of $\Delta(\lambda)$, in the sense that
each section of the radical series has a $\Delta^p$-filtration.\end{thm}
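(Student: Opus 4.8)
The plan is to prove this by passing to the quantum enveloping algebra $U_\zeta$ at a primitive $p$th root of unity $\zeta$, where the Lusztig character formula holds automatically (for $p>h$, by \cite{T}), establishing the analogue of the filtration there, and then reducing modulo $\pi=\zeta-1$ back to $G$. First I would work over $K$: via Lusztig's quantum Frobenius $\mathrm{Fr}\colon U_\zeta\twoheadrightarrow\Dist(G)_K$ and the resulting ``quantum Frobenius kernel'' description of $U_\zeta$ as an extension of the small quantum group $u_\zeta$ by $\Dist(G)_K$, together with the validity of the LCF in that setting, I would produce a filtration of the quantum Weyl module $\Delta_\zeta(\lambda)$ by $U_\zeta$-submodules whose successive sections have the form $\Delta^p_\zeta(\gamma):=L_\zeta(\gamma_0)\otimes(\mathrm{Fr}^*\Delta_\zeta(\gamma_1))$, $\gamma=\gamma_0+p\gamma_1\in X(T)_+$ with $\gamma_0$ restricted. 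When $\lambda$ is $p$-regular, $\Delta_\zeta(\lambda)$ is supported on regular blocks of $u_\zeta$; since the regular part $u'_\zeta$ is Koszul (Theorem~\ref{thm2.1}), its radical series is as large as possible, and I would arrange the filtration above to refine the $u_\zeta$-radical filtration of $\Delta_\zeta(\lambda)$. This refinement is forced: as $\gamma_0$ is restricted, $L_\zeta(\gamma_0)$ is irreducible over $u_\zeta$ and $\mathrm{Fr}^*\Delta_\zeta(\gamma_1)$ is $u_\zeta$-trivial, so each section $\Delta^p_\zeta(\gamma)$ is $u_\zeta$-semisimple and hence must lie inside a single layer of the radical filtration.

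Next I would descend to the $\mathcal O$-form and reduce modulo $\pi$. Taking the minimal (Weyl) $\widetilde U_\zeta$-lattice $\widetilde\Delta_\zeta(\lambda)$, whose reduction is $\Delta(\lambda)$, I would show the $U_\zeta$-filtration can be chosen to be defined over $\mathcal O$ with every subquotient $\mathcal O$-free --- concretely, by intersecting the filtration terms with $\widetilde\Delta_\zeta(\lambda)$ and using that $\Delta^p_\zeta(\gamma)$ carries the admissible lattice (minimal lattice of $L_\zeta(\gamma_0)$)$\,\otimes_{\mathcal O}\,$(Weyl lattice of $\Delta_\zeta(\gamma_1))^{\mathrm{Fr}}$. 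Since the subquotients are $\mathcal O$-free, $-\otimes_{\mathcal O}k$ stays exact along the filtration, so $\Delta(\lambda)$ inherits a $G$-module filtration whose sections are the reductions of these lattices, namely $\rDelta(\gamma_0)\otimes\Delta(\gamma_1)^{[1]}$. By the LCF for restricted weights, $\rDelta(\gamma_0)=L(\gamma_0)$, so each section is $L(\gamma_0)\otimes\Delta(\gamma_1)^{[1]}=\Delta^p(\gamma)$; this gives the first assertion. For the second assertion, I would use that, under the LCF, the forced radical filtration behaves well under reduction (this is the content of Theorem~\ref{thm2.2} and the identification $(\wrad^n\wfa)\otimes_{\mathcal O}k=\rad^n u'$), so that $(\wrad^n\wfa)\widetilde\Delta_\zeta(\lambda)$ reduces to $\rad^n_{u'}\Delta(\lambda)$, which for $p$-regular $\lambda$ is precisely the $G_1$-radical filtration of $\Delta(\lambda)$. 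Since the $\Delta^p_\zeta$-filtration refines the $u_\zeta$-radical filtration, its reduction refines the $G_1$-radical filtration, so each $G_1$-radical section is an iterated extension of modules $\Delta^p(\gamma)$ --- in fact, being $G_1$-semisimple, a direct sum of such --- and therefore has a $\Delta^p$-filtration.

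The step I expect to be the main obstacle is the integral bookkeeping in the second paragraph: one must genuinely check that the quantum filtration of $\Delta_\zeta(\lambda)$ can be realized over $\widetilde U_\zeta$ with $\mathcal O$-\emph{free} (torsion-free) subquotients, for otherwise $-\otimes_{\mathcal O}k$ need not remain exact and extra composition factors could creep into the sections; and one must verify that reduction modulo $\pi$ sends the $u_\zeta$-radical filtration of $\widetilde\Delta_\zeta(\lambda)$ exactly onto the $u$-radical filtration of $\Delta(\lambda)$ --- this is precisely where Koszulity of $u'_\zeta$, hence the LCF, does the essential work. A secondary point requiring care is to pin down which consequence of the LCF (or of the Koszul grading of \cite{AJS}) actually delivers the $\Delta^p_\zeta$-filtration of $\Delta_\zeta(\lambda)$ in the first step, together with its compatibility with the $u_\zeta$-radical series, in a form robust enough to survive the descent to $\mathcal O$.
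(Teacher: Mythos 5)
Your overall route---prove a quantum analogue at the $p$th root of unity, descend to an $\sO$-lattice, and reduce mod $\pi$, using the LCF for restricted weights to convert $\rDelta(\gamma_0)\otimes\Delta(\gamma_1)^{[1]}$ into $\Delta^p(\gamma)$---is indeed the strategy of the cited source \cite[\S5]{PS11} (this paper itself gives no proof; the $\wgr$ construction of \S3, defined by intersecting with $\rad^n\wfa_K$, is exactly the device you are re-inventing). However, as written the proposal has two genuine gaps, both at the points where the real content of the theorem lies. First, the ``forced refinement'' argument is a non sequitur: if a filtration $F_\bullet$ of $\Delta_\zeta(\lambda)$ has $u_\zeta$-semisimple sections, you only get $\rad^i_{u_\zeta}\Delta_\zeta(\lambda)\subseteq F_i$; compatibility in the sense of the theorem requires that each layer $\rad^i/\rad^{i+1}$ of the $u_\zeta$-radical series itself be a direct sum of modules $L_\zeta(\gamma_0)\otimes\Delta_K(\gamma_1)^{[1]}$, i.e.\ essentially the reverse inclusion. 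That is a rigidity/linearity statement about the quantum Weyl module over the Koszul algebra $u'_\zeta$ and must be extracted from the graded (AJS) structure; it does not follow from semisimplicity of the sections of some filtration you have built. You flag ``which consequence of the LCF delivers this'' as a secondary point, but it is the first main theorem to be proved, not a citation to be located.

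Second, the integral step is asserted rather than argued, and the one justification you offer is misattributed. Theorem \ref{thm2.2} asserts compatibility of the $\mathbb Z$-grading on $\widetilde u'_\zeta$ with the $X$-grading; it does not give the identification $(\wrad^n\wfa)\otimes_\sO k=\rad^n u'$, nor that $(\wrad^n\wfa)\widetilde\Delta_\zeta(\lambda)$ reduces onto the $G_1$-radical series of $\Delta(\lambda)$ (the paper explicitly warns that $\wgr$ may differ from $\gr$; equality here uses the LCF-Koszulity of $u'$ plus a head/generation argument). Likewise, the lattice you obtain by intersecting $\widetilde\Delta_\zeta(\lambda)$ with the quantum filtration is not visibly of the tensor form (minimal lattice of $L_\zeta(\gamma_0))\otimes(\text{Weyl lattice})^{[1]}$, so the claim that its reduction has a $\rDelta^p$-filtration needs a separate argument --- in the actual proof this is where the integral quasi-hereditary structure of $\wgr\wA_\Gamma$ from \cite{PS10} is used. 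You candidly identify these as ``the main obstacle,'' but since no mechanism is proposed for overcoming them, the proposal as it stands is an outline of the known strategy with its two essential steps left open.
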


\begin{thm} \label{theorem5.3} \cite[Thm. 5.3(a)]{PS13} Let $\lambda,\mu\in X_{\text{\rm reg}}(T)_+$.  
For any integer $n\geq 0$,  the rational $G$-module $\Ext^n_{G_1}(\rDelta(\lambda),\rnabla(\mu))^{[-1]}$  has a $\nabla$-filtration.\end{thm}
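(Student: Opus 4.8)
The plan is to use the standing hypothesis that the LCF holds (in the form \eqref{LCF}) to replace $\rDelta(\lambda)$ and $\rnabla(\mu)$ by the explicit modules $\Delta^p(\lambda)=L(\lambda_0)\otimes\Delta(\lambda_1)^{[1]}$ and $\nabla_p(\mu)=L(\mu_0)\otimes\nabla(\mu_1)^{[1]}$ (writing $\lambda=\lambda_0+p\lambda_1$, $\mu=\mu_0+p\mu_1$ with $\lambda_0,\mu_0$ restricted), and then to reduce the theorem to a statement about restricted irreducible modules. First note that, $G_1$ being normal in $G$ and acting trivially (via inner automorphisms) on its own cohomology, $\Ext^n_{G_1}(\rDelta(\lambda),\rnabla(\mu))$ is inflated from $G/G_1\cong G^{(1)}$, so the untwist $M:=\Ext^n_{G_1}(\rDelta(\lambda),\rnabla(\mu))^{[-1]}$ is a genuine finite-dimensional rational $G$-module; the assertion is that $M$ has a $\nabla$-filtration.

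Next, since the Frobenius-twisted tensor factors $\Delta(\lambda_1)^{[1]}$ and $\nabla(\mu_1)^{[1]}$ are trivial over $G_1$, they pull out of the $G_1$-Ext, giving a $G$-module isomorphism
\[
\Ext^n_{G_1}(\rDelta(\lambda),\rnabla(\mu))\;\cong\;\Ext^n_{G_1}(L(\lambda_0),L(\mu_0))\otimes\bigl(\Delta(\lambda_1)^{*}\otimes\nabla(\mu_1)\bigr)^{[1]},
\]
hence $M\cong \Ext^n_{G_1}(L(\lambda_0),L(\mu_0))^{[-1]}\otimes\Delta(\lambda_1)^{*}\otimes\nabla(\mu_1)$. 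The modules $\Delta(\lambda_1)^{*}=\nabla(-w_0\lambda_1)$ and $\nabla(\mu_1)$ have good filtrations, so their tensor product does (Donkin--Mathieu), and a tensor product of two modules with good filtrations has a good filtration. Thus the theorem follows once one knows:
\[
(\star)\qquad \Ext^n_{G_1}(L(\nu),L(\nu'))^{[-1]}\ \text{has a good filtration, for restricted }p\text{-regular }\nu,\nu'.
\]

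To prove $(\star)$ I would compare with the small quantum group. Under the LCF, $L(\nu)=\rDelta(\nu)$ is the reduction mod $p$ of the quantum irreducible $L_\zeta(\nu)$ via an admissible $\sO$-lattice, and the integral Koszul structures of Theorems \ref{thm2.1}--\ref{thm2.2} (ultimately \cite[\S18.21]{AJS} together with \cite{PS11}) make the $\sO$-forms of the relevant $\Ext$-groups flat; consequently there is a $G$-equivariant isomorphism
\[
\Ext^n_{G_1}(L(\nu),L(\nu'))\;\cong\;\Ext^n_{u_\zeta}\bigl(L_\zeta(\nu),L_\zeta(\nu')\bigr)\otimes_\sO k,
\]
where the $G$-structure on the right is induced by the quantum Frobenius $U_\zeta\twoheadrightarrow\Dist(G)$, which identifies $U_\zeta/u_\zeta$ with the hyperalgebra of $G^{(1)}$. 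Since good filtrations survive this flat base change, $(\star)$ reduces to showing that $\Ext^n_{u_\zeta}(L_\zeta(\nu),L_\zeta(\nu'))^{[-1]}$ carries a good $G$-filtration. For this I would first resolve $L_\zeta(\nu)$ by a finite complex of direct sums of costandard modules $\nabla_\zeta(\delta)$ --- available because the regular block of $u_\zeta$ is Koszul --- thereby reducing to the good-filtration property of (the untwist of) $\Ext^\bullet_{u_\zeta}(\nabla_\zeta(\delta),\nabla_\zeta(\delta'))$, which is known from the geometric realization of small-quantum-group cohomology (Ginzburg--Kumar for trivial coefficients, and the Bendel--Nakano--Parshall--Pillen analysis via bundles with good filtrations on $G/B$ and the nilpotent cone).

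I expect the main obstacle to be precisely this last step, together with the integral bookkeeping it requires: one must verify that every $\Ext$-group appearing has a flat $\sO$-form (so that reduction mod $p$ commutes with $\Ext$) and that the $G^{(1)}$-module structure arising from $U_\zeta/u_\zeta$ is transported correctly through the base change and the costandard resolution. As an alternative to the quantum detour, one could use that $p\ge 2h-2$ to resolve $L(\nu')$ over $G_1$ by $G$-equivariant injectives that lift to Donkin's tilting modules $\widehat Q_1(\delta)\cong T\bigl(2(p-1)\rho+w_0\delta\bigr)$, reducing $(\star)$ to the identification of $\Hom_{G_1}(L(\nu),\widehat Q_1(\delta))$ as a one-dimensional $G^{(1)}$-module; but making such a resolution $G$-equivariant and pinning down the relevant weight twist is itself the delicate point.
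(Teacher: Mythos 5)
Your opening reduction is correct but it is only the easy half: under the standing LCF hypothesis $\rDelta(\lambda)=\Delta^p(\lambda)$ and $\rnabla(\mu)=\nabla_p(\mu)$, the Frobenius-twisted factors pull out of $\Ext^\bullet_{G_1}$, $\Delta(\lambda_1)^*\cong\nabla(-w_0\lambda_1)$, and the Donkin--Mathieu tensor product theorem reduces everything to your $(\star)$, i.e.\ to the case where $\lambda,\mu$ are restricted, in which case $\rDelta(\lambda)=L(\lambda)$ and $\rnabla(\mu)=L(\mu)$. So $(\star)$ is not an auxiliary lemma; it \emph{is} the theorem, and it is exactly there that your argument has a genuine gap. (Note also that the present paper contains no proof of this statement: it is quoted from \cite[Thm.\ 5.3(a)]{PS13}, whose argument runs in characteristic $p$, through the machinery reviewed in this section --- $\Delta^p$-filtrations of Weyl modules as in Theorem \ref{Weyl}, good-filtration results for $\Ext^\bullet_{G_1}(\Delta(\lambda),\nabla(\mu))^{[-1]}$ of Kumar--Lauritzen--Thomsen/Andersen type, and the even--odd vanishing coming from the LCF, which yields the surjectivity statements recorded in Theorem \ref{filtration6.2} --- rather than by transporting filtrations from the quantum side.)

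The pivotal sentence ``good filtrations survive this flat base change'' is not a valid principle, and it is precisely where the content of the theorem lies. On the quantum side the $G$-structure (through the quantum Frobenius) lives in characteristic $0$, where every finite-dimensional rational module is semisimple, so the ``good filtration'' you invoke there (via Ginzburg--Kumar, BNPP, etc.) is vacuous; it carries no information. And reduction mod $p$ of an $\sO$-lattice in a characteristic-$0$ module need \emph{not} have a $\nabla$-filtration --- the irreducible $L(\nu)=k\otimes_\sO\widetilde L_\zeta(\nu)$ itself is the standard counterexample. Thus the quantum comparison can at best control the character, or (after an even--odd vanishing argument as in the proof of Theorem \ref{maintheorem}, which is what actually makes the relevant $\sO$-forms of the $\Ext$-groups free --- Koszulity of $u'_\zeta$ alone does not, and your displayed isomorphism must in any case be taken over $\widetilde u'_\zeta$, not $u_\zeta$) the dimension of $\Ext^n_{G_1}(L(\nu),L(\nu'))$; it cannot deliver the filtration property, which must instead be established in characteristic $p$, e.g.\ by Donkin's $\Ext^1_G(\Delta(\sigma),-)$-vanishing criterion fed by genuinely characteristic-$p$ cohomological input. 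Two smaller points: Koszulity of the regular block of $u_\zeta$ gives linear \emph{projective} resolutions, not a finite coresolution of $L_\zeta(\nu)$ by direct sums of costandard modules (only $\nabla_\zeta$-filtered, e.g.\ injective, coresolutions exist in general); and your alternative route through the lifted injectives $\widehat Q_1(\delta)\cong T(2(p-1)\rho+w_0\delta)$ is, as you say yourself, not carried out --- making that resolution $G$-equivariant and controlling the resulting twists is again the hard part, not a detail.
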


Before stating the next result, we recall some standard terminology. In case $B$ is a graded algebra and $M,N$ are graded $B$-modules, $\grExt_B^\bullet(M,N)$ denotes
the Ext-groups computed in the category of graded $B$-modules. If $r\in\mathbb Z$, then $N\langle r\rangle$
is the graded $B$-module obtained from $N$ by shifting the grades $r$-steps to the right, i.~e.,
$N\langle r\rangle_i:=N_{i-r}$, for all $i\in\mathbb Z$. Therefore,
\begin{equation}\label{ext}\Ext^n_B(M,N)=\bigoplus_{r\in\mathbb Z}\grExt^n_B(M,N\langle r\rangle).
\end{equation}. 
We will use these conventions in the next result. 

\begin{thm}\label{firstmainconsequencethm5.6}\cite[Thm. 5.6]{PS13}  Let $\lambda,\mu\in\Gamma$. 
$$\forall n\in{\mathbb N}, r\in{\mathbb Z},\quad\grExt^n_{\wgr A}(\rDelta(\lambda),\rnabla(\mu)\langle r\rangle)\not=0\implies r=n.$$ \end{thm}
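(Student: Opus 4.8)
The plan is to relate the graded Ext-groups over $\wgr A$ (where $A=A_\Gamma$) to Ext-groups over the forced-graded restricted enveloping algebra $\wgr\wfa$, and then to invoke the Koszulity of $u'_\zeta$ (hence of the associated graded of its $\sO$-form) supplied by Theorem \ref{thm2.2}. First I would recall from \cite{PS13} that, under the standing hypotheses (LCF, $p\geq 2h-2$ odd), $\wgr A$ is quasi-hereditary with standard modules $\wgr\Delta(\gamma)$, and that $\rDelta(\lambda)=\Delta^p(\lambda)$ (this is just the LCF) so that $\wgr\rDelta(\lambda)$ has a known graded structure; similarly for $\rnabla(\mu)$ on the costandard side. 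The key input is that $\wgr\rDelta(\lambda)$ is a \emph{linear} (Koszul) graded module over $\wgr A$ generated in degree $0$, and dually $\wgr\rnabla(\mu)$ has its socle/top concentrated appropriately — this is where the LCF and the results of \cite{PS11}, \cite{PS13} on radical-series compatibility (Theorem \ref{Weyl}, Theorem \ref{theorem5.3}) get used.

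The main mechanism I would use is a spectral sequence or a Frobenius-kernel filtration argument reducing the computation of $\grExt^n_{\wgr A}(\rDelta(\lambda),\rnabla(\mu)\langle r\rangle)$ to graded Ext over $u'$ (equivalently, over $\wgr\wfa\otimes_\sO k$), together with a $\nabla$-filtration statement for the intermediate $\Ext_{G_1}$ groups. Concretely: the ungraded identity $\Ext^n_{G_1}(\rDelta(\lambda),\rnabla(\mu))^{[-1]}$ has a $\nabla$-filtration (Theorem \ref{theorem5.3}), and one promotes this to a \emph{graded} statement using the forced grading, so that the grading on $\Ext^n$ is ``pure of weight $n$'' exactly because the associated-graded restricted algebra is Koszul (Theorem \ref{thm2.2} plus the known consequence of the LCF that the $p$-regular part of $u$ is Koszul, noted in the remark after Theorem \ref{thm2.2} and in \cite{AJS}). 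The vanishing $r\neq n\implies\grExt^n=0$ is then the Koszulity/linearity statement: a linear module has its $n$-th Ext concentrated in internal degree $n$, and $\rDelta(\lambda)$, $\rnabla(\mu)$ are arranged (after the shift conventions of (\ref{ext})) so that the standard is generated in degree $0$ and the costandard cogenerated in degree $0$.

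The steps, in order: (1) rewrite both sides using $\rDelta(\lambda)=\Delta^p(\lambda)$, $\rnabla(\mu)=\nabla_p(\mu)$ and unwind $\Delta^p(\lambda)=L(\lambda_0)\otimes\Delta(\lambda_1)^{[1]}$, reducing (via the tensor identity and the fact that $\Delta(\lambda_1)^{[1]}$ is trivial over $G_1$) to a $G_1$-computation twisted by the Frobenius; (2) apply Theorem \ref{theorem5.3} to see the relevant $\Ext_{G_1}$-module has a $\nabla$-filtration, and assemble a Lyndon--Hochschild--Serre-type spectral sequence $\Ext^i_{G/G_1}(-,\Ext^j_{G_1}(-,-))\Rightarrow\Ext^{i+j}_G$ on the forced-graded level; (3) track internal degrees through this spectral sequence, using that $\wgr\wfa$ base-changes (Theorem \ref{thm2.2}) to the Koszul algebra $u'_\zeta$, so the $G_1$-Ext contributes internal degree $j$ in cohomological degree $j$; (4) observe the $G/G_1\cong G$ part contributes nothing to the internal grading since the Frobenius-twisted factors live in a single forced-grade; (5) combine to conclude $\grExt^n_{\wgr A}(\rDelta(\lambda),\rnabla(\mu)\langle r\rangle)\neq 0$ forces $r=n$. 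The hard part will be step (3)--(4): making the forced grading interact correctly with the $G_1$-vs-$G/G_1$ decomposition, i.e., showing the spectral sequence is a spectral sequence of \emph{graded} modules with the internal degree behaving additively and that no ``degree leakage'' occurs between the Frobenius-kernel and Frobenius-quotient contributions. This is precisely the place where one needs the sharpened deformation/compatibility result (Theorem \ref{thm2.2}) guaranteeing the grading on $\wfa$ is a direct sum of its intersections with weight spaces, so that the $X$-grading and the $\mathbb Z$-grading can be handled simultaneously.
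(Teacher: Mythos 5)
First, a point of orientation: the paper itself does not prove this statement; it quotes it from \cite[Thm.\ 5.6]{PS13}, and the closest argument actually written out in this paper is the proof of the relativized version, Theorem \ref{theorem6.2}(1). Measured against that argument (which mirrors the PS13 proof), your overall strategy is the right one: reduce the computation to the Frobenius-kernel level using the $\nabla$-filtration of $\Ext^\bullet_{G_1}(\rDelta(\lambda),\rnabla(\mu))^{[-1]}$ (Theorem \ref{theorem5.3}), and conclude from the Koszulity of the regular part of the small quantum group together with the grading compatibility of its $\sO$-form (Theorem \ref{thm2.2}, AJS), noting that $\rDelta(\lambda)|_{G_1}$ is a direct sum of copies of $L(\lambda_0)$ so that the final input is degree-purity of graded Ext between irreducibles over the forced-graded $u'$.

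However, your steps (2)--(4) hinge on an unconstructed object: a ``Lyndon--Hochschild--Serre-type spectral sequence on the forced-graded level'' with ``no degree leakage.'' That is precisely the hard point, and it is not how the argument goes: $\wgr A_\Gamma$ is not $\Dist(G)$ and carries no evident $G_1\trianglelefteq G$ structure, so such a graded spectral sequence over $\wgr A$ is not available without proof. The actual route is: first the \emph{ungraded} comparison $\Ext^n_{\wgr A}(\rDelta(\lambda),\rnabla(\mu))\cong\Ext^n_{A_\Gamma}(\rDelta(\lambda),\rnabla(\mu))\cong\Ext^n_G(\rDelta(\lambda),\rnabla(\mu))$ (Theorem \ref{thirdmainconsequencethm6.5}, itself a substantial forced-grading theorem that your sketch never secures and cannot be waved through); then the ordinary Hochschild--Serre spectral sequence for $G_1\subset G$, which collapses not because the $G/G_1$ part is ``internal-degree neutral'' but because its $E_1$-coefficients have $\nabla$-filtrations and $H^{>0}(G,-)$ vanishes on $\nabla$-filtered modules, giving $\Ext^n_G(\rDelta,\rnabla)\cong\Ext^n_{G_1}(\rDelta,\rnabla)^G$ and hence an injection of $\Ext^n_{\wgr A}(\rDelta,\rnabla)$ into $\Ext^n_{u'}(\rDelta,\rnabla)$; only then does one pass to graded components, where compatibility has to be checked for a single restriction-induced injection (using Theorem \ref{thm2.2}) rather than for a whole spectral sequence, and the conclusion $n=r$ follows from the graded statement over $u'$ (Theorem \ref{maintheorem}; under the full LCF, the AJS Koszul grading). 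So the missing idea is the ungraded comparison isomorphism plus the collapse mechanism; without it your step (2) has no foundation, and your step (4) mis-states why the ambient $G$-direction causes no trouble.
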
 

Recall from \cite[Defn. \ref{firstmainconsequencethm5.6}]{PS13} that a positively graded algebra $B$ is called a
{\it Q-Koszul algebra} provided that:

\begin{itemize} \item[(1)] its grade 0 component $B_0$ is quasi-hereditary with poset
$\Gamma$ (and with standard and costandard modules denoted $\Delta^0(\gamma)$ and
$\nabla_0(\gamma)$ (respectively); and 

\item[(2)] if $\Delta^0(\gamma)$ and $\nabla_0(\gamma)$ are given
pure grade 0, as graded $B$-modules, then 
\begin{equation}\label{QKoszul}\forall n\in{\mathbb N}, r\in{\mathbb Z},\lambda,\mu\in\Gamma, \quad\grExt^n_B(\Delta^0(\lambda),\nabla_0(\mu)\langle r\rangle)\not=0\implies n=r.\end{equation}
\end{itemize}

Suppose that $B$ is Q-Koszul {\it and} a graded quasi-hereditary algebra with weight poset $\Gamma$, having graded
standard (resp., costandard) modules $\Delta^B(\gamma)$ (resp., $\nabla_B(\gamma)$), $\gamma\in\Gamma$, with head (resp., socle) of grade 0. If
\begin{equation}\label{conditions1}\begin{cases}\grExt^n_B(\Delta^B(\lambda),\nabla_0(\mu)\langle r \rangle )\not=0\implies n=r;\\
\grExt^n_B(\Delta^0(\mu),\nabla_B(\lambda)\langle r \rangle )\not=0\implies n=r,\end{cases}\end{equation}
then we say that $B$ is a {\it standard} Q-Koszul algebra.

The notions of Q-Koszul and standard Q-Koszul algebras are generalization of the notions of
Koszul and standard Koszul algebra.  In the terminology above, a Koszul algebra $B$ is a Q-Koszul
algebra in which the modules $\Delta^0(\gamma)$ and $\nabla_0(\gamma)$ are all irreducible. 
Equivalently, $B_0$ is semisimple.
An algebra $B$ is a {\it standard Koszul algebra} if it is a Koszul algebra and a graded quasi-hereditary
algebra,\footnote{A graded quasi-hereditary algebra is just a quasi-hereditary algebra with a positive
grading. All irreducible, standard and costandard modules (and more) will have graded versions as above; see
\cite{CPS1a}. Here the positive grading is taken from the Koszul algebra.}
 and if the conditions (\ref{conditions1}) hold.
 (See comments above \cite[Defn. 3.6]{PS13} for some history of the ``standard" Koszul terminology. Our usage comes from Mazorchuk \cite{Maz}. For more history of Koszul gradings,  see \cite{PS9}.)

\begin{thm}\label{filtration6.2}\cite[Thm. 6.2]{PS13}   For $\mu,\nu\in\Gamma$, the rational $G$-module $\Ext^m_{G_1}(\Delta(\nu),\rnabla(\mu))^{[-1]}
$ has a $\nabla$-filtration and the restriction natural map
\begin{equation}\label{Jantzentheorem}
\Ext^m_{G_1}(\rDelta(\nu),\rnabla(\mu))\to\Ext^m_{G_1}(\Delta(\nu),\rnabla(\mu))\end{equation}
 is surjective.
 
 Dually, the rational $G$-module $\Ext^m_{G_1}(\rDelta(\mu),\nabla(\lambda))^{[-1]}$ has a $\nabla$-filtration
 and the natural map 
 \begin{equation}\label{Jantzentheorem2}
 \Ext^m_{G_1}(\rDelta(\nu),\rnabla(\mu))\to\Ext^m_{G_1}(\rDelta(\nu),\nabla(\nu))\,\,
 {\text{\rm is surjective}}.\end{equation}
\end{thm}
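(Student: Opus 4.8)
The plan is to exploit the surjectivity half of Theorem \ref{filtration6.2} together with the fact that the two halves of the statement are formally dual under the duality $D$ on $\ssC_k$ (equivalently, on rational $G$-modules), so that proving the first half carefully yields the second. First I would fix $\mu,\nu\in\Gamma$ and recall that, by Theorem \ref{theorem5.3}, the rational $G$-module $\Ext^m_{G_1}(\rDelta(\nu),\rnabla(\mu))^{[-1]}$ already has a $\nabla$-filtration; so the new content is the $\nabla$-filtration on $\Ext^m_{G_1}(\Delta(\nu),\rnabla(\mu))^{[-1]}$ and the surjectivity of the restriction map in (\ref{Jantzentheorem}). To get a handle on the map, recall from Theorem \ref{Weyl} that $\Delta(\nu)$ has a filtration by $G$-submodules whose sections are the modules $\Delta^p(\gamma)$, and, when $\nu$ is $p$-regular (which it is, being in $\Gamma$), this filtration is compatible with the $G_1$-radical series of $\Delta(\nu)$. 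The quotient $\Delta(\nu)/\rDelta(\nu)$ therefore has a filtration whose sections are among the $\Delta^p(\gamma)=L(\gamma_0)\otimes\Delta(\gamma_1)^{[1]}$ for weights $\gamma$ strictly below $\nu$ (since the LCF is in force, $\rDelta(\nu)=\Delta^p(\nu)$ is the top section).

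Next I would run the long exact sequence in $\Ext^\bullet_{G_1}(-,\rnabla(\mu))$ attached to the short exact sequence $0\to N\to\Delta(\nu)\to\rDelta(\nu)\to 0$, where $N:=\ker(\Delta(\nu)\twoheadrightarrow\rDelta(\nu))$, and untwist by $[-1]$. Because the LCF holds on all of $\Gamma$ and the poset is stable, the relevant Ext-groups between $\rDelta$'s and $\rnabla$'s are exactly the graded objects controlled by Theorem \ref{firstmainconsequencethm5.6}: they are concentrated in the diagonal grade $r=n$. The key input here is that $N$ has a filtration with sections $\Delta^p(\gamma)$, $\gamma<\nu$, and that for each such section $\Ext^\bullet_{G_1}(\Delta^p(\gamma),\rnabla(\mu))^{[-1]}=\Ext^\bullet_{G_1}(\rDelta(\gamma),\rnabla(\mu))^{[-1]}$ has a $\nabla$-filtration by Theorem \ref{theorem5.3} again; since $\nabla$-filtered modules are closed under extensions, $\Ext^m_{G_1}(N,\rnabla(\mu))^{[-1]}$ has a $\nabla$-filtration, and then so does $\Ext^m_{G_1}(\Delta(\nu),\rnabla(\mu))^{[-1]}$ once the connecting maps are controlled. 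The surjectivity in (\ref{Jantzentheorem}) then follows by a dimension/character count: the restriction map sits in the long exact sequence, and its cokernel embeds in $\Ext^{m+1}_{G_1}(N,\rnabla(\mu))$; I would show this cokernel vanishes by comparing $G$-characters (or by a grading argument using that $\rDelta(\nu)=\wgr$-standard and Theorem \ref{firstmainconsequencethm5.6} forces all the pieces into matching grades, leaving no room for a nonzero cokernel).

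The main obstacle I anticipate is precisely this last vanishing: a priori the connecting homomorphism $\Ext^m_{G_1}(\rDelta(\nu),\rnabla(\mu))\to\Ext^m_{G_1}(\Delta(\nu),\rnabla(\mu))$ could fail to be surjective if some class in the target does not lift, i.e.\ if $\Ext^{m+1}_{G_1}(N,\rnabla(\mu))\to\Ext^{m+1}_{G_1}(\Delta(\nu),\rnabla(\mu))$ is not injective. Handling this cleanly requires knowing that the $\Delta^p$-filtration of $\Delta(\nu)$ is \emph{compatible with the $G_1$-radical series} (the strengthened form in Theorem \ref{Weyl}), so that the induced grading on $\Ext^\bullet_{G_1}(\Delta(\nu),\rnabla(\mu))$ via the $\wgr$-construction of \S3 is the expected one, and then invoking Theorem \ref{firstmainconsequencethm5.6} to see that the relevant graded Ext-groups live in a single grade each; the potential obstruction then lies in a grade that is forced to be zero. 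Finally, for the dual statements (\ref{Jantzentheorem2}) about $\Ext^m_{G_1}(\rDelta(\mu),\nabla(\lambda))^{[-1]}$, I would apply $D$: since $D\rDelta(\gamma)\cong\rnabla(\gamma)$ and $D\nabla(\lambda)\cong\Delta(\lambda)$, and $\Ext^\bullet_{G_1}(X,Y)^{[-1]}$ is a rational $G$-module on which $D$ acts sending it to $\Ext^\bullet_{G_1}(DY,DX)^{[-1]}$, the dual assertion is literally the image under $D$ of the first assertion, and $D$ sends $\nabla$-filtered modules to $\Delta$-filtered modules and surjections to injections — wait, that is the wrong variance, so instead I would redo the argument verbatim with the roles of $\Delta^p$ and $\nabla_p$ interchanged, using the costandard form of Theorem \ref{Weyl} and the second displayed implication of Theorem \ref{firstmainconsequencethm5.6}.
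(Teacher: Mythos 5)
A preliminary remark: the paper itself gives no proof of this statement --- it is quoted from \cite[Thm. 6.2]{PS13} in the review section \S 3, and the only contact the present paper has with its proof is the remark, inside the proof of Theorem \ref{theorem6.2}(2), that the argument of \cite{PS13} ``remains valid'' under the relativized hypotheses there. Your sketch does assemble the right ingredients --- the $\Delta^p$-filtration of $\Delta(\nu)$ compatible with the $G_1$-radical series (Theorem \ref{Weyl}), Theorem \ref{theorem5.3}, and long exact sequences in $\Ext^\bullet_{G_1}(-,\rnabla(\mu))$ --- and this is the general shape of the argument in \cite{PS13}.

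However, there is a genuine gap: the step you yourself flag as the obstacle (killing the connecting maps, which is the entire content of the surjectivity of (\ref{Jantzentheorem})) is not carried out, and the devices you offer do not work as stated. In the long exact sequence for $0\to N\to\Delta(\nu)\to\rDelta(\nu)\to 0$, the cokernel of (\ref{Jantzentheorem}) embeds in $\Ext^m_{G_1}(N,\rnabla(\mu))$, not in $\Ext^{m+1}_{G_1}(N,\rnabla(\mu))$, and there is no map $\Ext^{m+1}_{G_1}(N,-)\to\Ext^{m+1}_{G_1}(\Delta(\nu),-)$ in that sequence; since $\Ext^m_{G_1}(N,\rnabla(\mu))$ is in general nonzero, no character or dimension count of that ambient group can force the cokernel to vanish. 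What is actually needed is an even--odd (parity) vanishing statement at the $G_1T$-level for $\Ext^\bullet_{G_1T}(\rDelta(\gamma),\rnabla(\mu))$, as in (\ref{KLproperties}) and Lemma \ref{evenodd} (a consequence of the LCF via \cite{AJS}), combined with the fact that compatibility with the $G_1$-radical series places the $\Delta^p$-sections of $N$ in layers whose parities are offset from that of the top section, so that the offending maps vanish for parity reasons; Theorem \ref{firstmainconsequencethm5.6}, which you invoke instead, lives over $\wgr A_\Gamma$ and concerns $\rDelta$ against $\rnabla$ only, so by itself it says nothing about the $G_1$-extension groups involving $\Delta(\nu)$. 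A second, related gap: the claim that $\Ext^m_{G_1}(N,\rnabla(\mu))^{[-1]}$, and then $\Ext^m_{G_1}(\Delta(\nu),\rnabla(\mu))^{[-1]}$, are $\nabla$-filtered ``because $\nabla$-filtered modules are closed under extensions'' is circular: the long exact sequences exhibit these groups as built from subquotients of $\nabla$-filtered modules only if they split into short exact sequences, which is again the surjectivity being proved; and even granting surjectivity of (\ref{Jantzentheorem}) in every degree, $\Ext^m_{G_1}(\Delta(\nu),\rnabla(\mu))$ is then a quotient of a $\nabla$-filtered module by a $\nabla$-filtered submodule, and such quotients need not have $\nabla$-filtrations without a further $\Ext^1_G$-vanishing argument. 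The filtration and surjectivity assertions have to be proved simultaneously, by induction on the length of the $\Delta^p$-filtration and on the cohomological degree, with the parity input above; that is the substance of the proof in \cite{PS13} and it is missing from the sketch. (Your observation that the naive duality argument for (\ref{Jantzentheorem2}) has the wrong variance, and that one should instead rerun the argument with $\nabla_p$-filtrations on the costandard side, is correct.)
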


The following
theorem implies that $\wgr A_\Gamma$ is a standard Q-Koszul algebra.

\begin{thm}\label{secondmainconsequencethm3.7} \cite[Thm. 3.7]{PS13} Let $\lambda,\mu\in\Gamma$. For any nonnegative integer $n$
and any integer $r$,
$$\grExt^n_{\wgr A}(\wgr\Delta(\lambda),\rnabla(\mu)\langle r \rangle )\not=0\implies r=n$$
and
$$\grExt^n_{\wgr A}(\rDelta(\lambda),\nabla_{\wgr A}(\mu)\langle r \rangle )\not=0\implies r=n,$$
where $\nabla_{\wgr A}(\mu)$ is the costandard object in the highest weight category
corresponding to $\mu$.\end{thm}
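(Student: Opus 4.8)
The plan is to deduce Theorem~\ref{secondmainconsequencethm3.7} from the results already recalled, principally Theorem~\ref{firstmainconsequencethm5.6} (the $Q$-Koszulity estimate on $\grExt$ between $\rDelta$ and $\rnabla$), Theorem~\ref{Weyl} (the $\Delta^p$-filtration of Weyl modules, compatible with the $G_1$-radical series), and Theorem~\ref{filtration6.2} together with its surjectivity statements. First I would recall that $\wgr\Delta(\lambda)$ is the associated graded of $\Delta(\lambda)$ for the filtration induced by $\wrad^n\wfa$, and that under the LCF its graded sections are, up to grade shift, the modules $\rDelta$ attached to the highest weights appearing in the $\Delta^p$-filtration of Theorem~\ref{Weyl}; the key extra point is that the grade shift of a section of highest weight $\gamma$ is dictated by the length function, i.e. the section $\rDelta(\gamma)$ sits in grade $\tfrac12(\ell(\lambda)-\ell(\gamma))$ or equivalently in the grade predicted by the $\nabla$-filtration levels of the relevant $\Ext$-groups. (This ``length = grade'' bookkeeping is exactly what the compatibility of the radical filtration with the $\Delta^p$-filtration in Theorem~\ref{Weyl} is designed to supply.)

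Granting that, the first displayed vanishing is proved by a short-exact-sequence induction on the length of a $\wgr\Delta^p$-filtration of $\wgr\Delta(\lambda)$. At each step one has an exact sequence $0\to\rDelta(\gamma)\langle c\rangle\to\wgr\Delta(\lambda)\to(\text{lower quotient})\to 0$ with $c$ the appropriate shift, and one applies $\grExt^\bullet_{\wgr A}(-,\rnabla(\mu)\langle r\rangle)$. The end terms are controlled by Theorem~\ref{firstmainconsequencethm5.6}: $\grExt^n_{\wgr A}(\rDelta(\gamma)\langle c\rangle,\rnabla(\mu)\langle r\rangle)\neq 0$ forces $r-c=n$, and one checks that the value of $c$ coming from Theorem~\ref{Weyl} is precisely such that this reads $r=n$ for a nonzero contribution to $\grExt^n_{\wgr A}(\wgr\Delta(\lambda),\rnabla(\mu)\langle r\rangle)$. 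Thus the long exact sequence squeezes the middle term between two groups supported only in the correct shift, giving $r=n$. The second displayed vanishing, involving $\nabla_{\wgr A}(\mu)$, is handled dually: one uses the $\rnabla$-analogue, namely that $\wgr\nabla(\mu)$ has a $\wgr\nabla_p$-filtration with sections $\rnabla(\gamma)$ in the length-predicted grades, runs the same filtration induction with $\grExt^\bullet_{\wgr A}(\rDelta(\lambda),-)$, and again invokes Theorem~\ref{firstmainconsequencethm5.6} on each section. Alternatively, one can pass through Theorem~\ref{filtration6.2}: its surjectivity assertions (\ref{Jantzentheorem})--(\ref{Jantzentheorem2}), after taking associated graded, identify $\grExt^n_{\wgr A}(\wgr\Delta(\lambda),\rnabla(\mu)\langle r\rangle)$ with a subquotient of $\grExt^n_{\wgr A}(\rDelta(\lambda),\rnabla(\mu)\langle r\rangle)$, whence the estimate is inherited directly from Theorem~\ref{firstmainconsequencethm5.6}.

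The main obstacle is the grade-shift bookkeeping: one must verify that the copy of $\rDelta(\gamma)$ occurring as a section of the $G_1$-radical-series-compatible $\Delta^p$-filtration of $\Delta(\lambda)$ really does land in grade $\tfrac12(\ell(\lambda)-\ell(\gamma))$ after applying $\wgr$, so that the ``$r=n+c$'' output of Theorem~\ref{firstmainconsequencethm5.6} collapses to ``$r=n$''. This is where the parity statement (\ref{length}) and the Kazhdan--Lusztig property (\ref{KLproperties}) enter: they guarantee that $\ell(\lambda)-\ell(\gamma)$ is even and equals twice the radical-layer depth, so the shift is a well-defined integer with the right value. Once this identification is in place, everything else is a formal dévissage along the quasi-hereditary structure of $\wgr A_\Gamma$, using only left-exactness of $\Hom$, the long exact $\grExt$-sequence, and the already-quoted theorems; no new homological input about $G$ or $u_\zeta$ is needed beyond what is assembled in \S2 and \S3.
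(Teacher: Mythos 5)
There is a genuine gap, and it sits exactly at the point you flag as ``bookkeeping.'' Your main argument filters $\wgr\Delta(\lambda)$ by sections of the form $\rDelta(\gamma)\langle c\rangle$ and applies Theorem \ref{firstmainconsequencethm5.6} to each section. But for a section in grade $c$ the conclusion of that theorem is $\grExt^n_{\wgr A}(\rDelta(\gamma)\langle c\rangle,\rnabla(\mu)\langle r\rangle)\neq 0\implies r-c=n$, i.e. $r=n+c$; since the sections of $\wgr\Delta(\lambda)$ occur in all grades $c=0,1,2,\dots$ (the grade is just the layer of the forced filtration, not something that can be normalized to $0$ by the length function), the long exact sequences only show that nonvanishing of $\grExt^n_{\wgr A}(\wgr\Delta(\lambda),\rnabla(\mu)\langle r\rangle)$ forces $r=n+c$ for \emph{some} $c\geq 0$, i.e. $r\geq n$. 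No step in your d\'evissage rules out the contributions with $c>0$, and the parity facts (\ref{length}), (\ref{KLproperties}) cannot do so: even if each section sits in the grade you predict, that grade is strictly positive for all but the top section. Eliminating those higher-shift contributions is precisely the nontrivial content of the theorem (it is the statement that $\wgr\Delta(\lambda)$ is Q-linear), and it does not follow formally from the $\Delta^0$-filtration plus Q-Koszulity of $B$: because $B_0$ is not semisimple, minimal graded syzygies need not increase in grade, so there is no cheap ``$r\leq n$'' estimate to pair with your ``$r\geq n$.''

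Note also that this paper does not prove the statement at all; it is quoted from \cite[Thm.\ 3.7]{PS13}. The closest in-paper argument is the relativized version, Theorem \ref{theorem6.2}(2), whose route is quite different from your primary one: it identifies $\Ext^n_{\wgr A}(\wgr\Delta(\lambda),\rnabla(\mu))$ with $\Ext^n_G(\Delta(\lambda),\rnabla(\mu))$ via Theorem \ref{thirdmainconsequencethm6.5}, collapses the Hochschild--Serre spectral sequence for $G_1\lhd G$ using $\nabla$-filtrations of $\Ext^t_{G_1}$ (the PSS smallness bound and the global dimension bound $2a(\Gamma)$), uses the surjectivity (\ref{Jantzentheorem}) to pass from $\Delta(\lambda)$ to $\rDelta(\lambda)$ at the $G_1$-level, and then concludes from the graded purity Theorem \ref{maintheorem} over $u'$ (ultimately the AJS/PS12 integral Koszul grading). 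Your fallback suggestion via Theorem \ref{filtration6.2} is in this spirit, but as stated it is off: the surjectivity (\ref{Jantzentheorem}) lives at the $G_1$-level and what it buys (after the identifications above and the grading transfer) is detection of $\grExt^n_{\wgr A}(\wgr\Delta(\lambda),\rnabla(\mu)\langle r\rangle)$ inside $\grExt^n_{u'}(\rDelta(\lambda),\rnabla(\mu)\langle r\rangle)$, not a subquotient relation among $\grExt$-groups over $\wgr A$; moreover, in \cite{PS13} Theorem 6.2 is proved downstream of Theorem 3.7, so invoking it to prove the present statement risks circularity unless you supply an independent proof along the lines of \S6.
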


 If $\Gamma$ is a poset of $p$-regular weights contained in the Jantzen region $\Jan$ and if the LCF holds, then $\wgr A_\Gamma
 \cong\gr A_\Gamma$, the graded algebra obtained from the radical filtration of $A_\Gamma$. Similarly, $\wgr\Delta(\gamma)\cong\gr\Delta(\gamma)$, the $\gr A_\Gamma$-module obtained from the radical series of $\Delta(\gamma)$.  

\begin{cor}\label{corollary3.8} \cite[Cor. 3.8]{PS13} Now assume that $\Gamma$ is contained in the Jantzen region $\Jan$. Then  $\gr \Delta(\lambda)$ is a linear module over $\gr A$. Also, the graded quasi-hereditary algebra $\gr A$-mod has a graded Kazhdan-Lusztig theory. In particular, $\gr A$ is Koszul. \end{cor}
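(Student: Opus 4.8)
The plan is to deduce all three assertions from Theorem~\ref{secondmainconsequencethm3.7}, once the hypothesis $\Gamma\subseteq\Jan$ has been used to collapse the forced grading onto the radical grading of $A_\Gamma$. So the first move is to record what the remark immediately preceding the statement gives: under the standing LCF hypothesis together with $\Gamma\subseteq\Jan$, one has $\wgr A_\Gamma\cong\gr A_\Gamma$ as graded algebras and $\wgr\Delta(\gamma)\cong\gr\Delta(\gamma)$ as graded modules, where from now on $\gr$ denotes the genuine radical filtration; dually the graded costandard $\nabla_{\wgr A}(\gamma)$ becomes $\gr\nabla(\gamma)$. In particular the degree-$0$ component is $(\gr A)_0=A_\Gamma/\rad A_\Gamma$, which is semisimple, so the quasi-hereditary structure that the degree-$0$ part carries in the $Q$-Koszul formalism is trivial: $\Delta^0(\gamma)=\nabla_0(\gamma)=L(\gamma)$, placed in grade $0$. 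Hence condition \eqref{QKoszul} for a $Q$-Koszul algebra degenerates to literal Koszulity, and the conditions \eqref{conditions1} for a standard $Q$-Koszul algebra degenerate to the assertions that $\gr\Delta(\lambda)$ and $\gr\nabla(\lambda)$ are linear $\gr A$-modules.

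Next, Theorem~\ref{secondmainconsequencethm3.7} says that $\wgr A_\Gamma$ is a standard $Q$-Koszul algebra, and in particular a $Q$-Koszul algebra. Feeding in the identifications above, the $Q$-Koszul property \eqref{QKoszul} becomes
\[
\grExt^n_{\gr A}(L(\lambda),L(\mu)\langle r\rangle)\neq 0\implies n=r,
\]
which is exactly the statement that $\gr A$ is Koszul, while the two lines of \eqref{conditions1} become
\[
\grExt^n_{\gr A}(\gr\Delta(\lambda),L(\mu)\langle r\rangle)\neq 0\implies n=r,\qquad \grExt^n_{\gr A}(L(\mu),\gr\nabla(\lambda)\langle r\rangle)\neq 0\implies n=r,
\]
which say precisely that $\gr\Delta(\lambda)$ and $\gr\nabla(\lambda)$ are linear; the first of these is part~(i). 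To finish, a graded quasi-hereditary algebra which is Koszul and all of whose graded standard and costandard modules are linear has, by the general theory of \cite{CPS1a} (see also \cite{Maz}), a graded Kazhdan-Lusztig theory with length function the grading; and this grading length function agrees with the function $\ell$ of \S2.2 because of the parity congruence \eqref{length} together with the (known, from the LCF via \cite{AJS}) fact that $A_\Gamma$ already carries a classical Kazhdan-Lusztig theory relative to $\ell$ when $\Gamma\subseteq\Jan$. This gives part~(ii), and part~(iii) is then immediate.

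The step I expect to be the real obstacle is the first one: checking, in the precise graded-module form that is needed, that inside the Jantzen region the forced grading does collapse onto the radical grading --- equivalently, that $(\wgr A_\Gamma)_0$ is semisimple and that the modules $\rDelta(\lambda)$, $\rnabla(\mu)$ and $\wgr\Delta(\lambda)$ appearing in Theorems~\ref{firstmainconsequencethm5.6} and \ref{secondmainconsequencethm3.7} specialize, respectively, to $L(\lambda)$ in grade $0$, to $\gr\nabla_p(\mu)$, and to $\gr\Delta(\lambda)$. This is exactly where the hypothesis $\Gamma\subseteq\Jan$ is used in an \emph{essential} way: outside the Jantzen region $(\wgr A)_0$ fails to be semisimple, $\gr A$ fails to be Koszul, and only the weaker $Q$-statements remain available. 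Once this collapse is in hand, the rest is bookkeeping together with the cited homological facts about standard Koszul algebras and graded Kazhdan-Lusztig theories.
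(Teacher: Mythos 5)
Your proposal is correct and follows essentially the same route as the paper: the corollary is treated (see the Scholia in \S6, and the remark immediately preceding the statement) as the specialization of Theorems \ref{firstmainconsequencethm5.6} and \ref{secondmainconsequencethm3.7} under the Jantzen-region collapse $\wgr A_\Gamma\cong\gr A_\Gamma$, $\wgr\Delta(\gamma)\cong\gr\Delta(\gamma)$, with $\rDelta(\gamma)=\rnabla(\gamma)=L(\gamma)$ so that Q-Koszulity degenerates to Koszulity and the standard Q-Koszul conditions to linearity, the graded Kazhdan--Lusztig theory then coming from these vanishing statements together with the (LCF-derived) ungraded Kazhdan--Lusztig theory relative to $\ell$ as in \cite{CPS1}.
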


The ``graded Kazhdan-Lusztig theory" property implies that $\gr A$ is standard Koszul. See \cite{CPS1}.

\section{Some new properties of Koszul and Q-Koszul algebras}

Some of the results of this section are quite general, and the characteristic of the underlying algebraically
closed field $k$ may be arbitrary, unless a prime $p$ is mentioned (in which case, $p$ is the characteristic of
$k$). Our first result formalizes in the Q-Koszul algebra case a property observed for Koszul algebras
in \cite[Proof of Cor. 3.2]{ADL}. Suppose that $M$ is a non-negatively graded module for a Q-Koszul
algebra $B$ (as defined after Theorem \ref{firstmainconsequencethm5.6} above). Then we will say that $M$ is {\it Q-linear} provided that
\begin{equation}\label{Qlinearmodule}\forall n\in{\mathbb N},r\in{\mathbb Z},\gamma\in\Gamma,\quad
\grExt^n_B(M,\nabla_0(\gamma)\langle r\rangle)\not=0\implies n=r.\end{equation}
A non-positively graded $B$-module $M$ is called {\it Q-colinear} provided that 
\begin{equation}\label{Qcolinearmodule}\forall n\in{\mathbb N}, r\in{\mathbb Z},\gamma\in\Gamma,\quad
\grExt^n_B(\Delta^0(\gamma)\langle -r\rangle,M)\not=0\implies n=r.\end{equation}

\begin{prop}
\label{prop4.1} Let $B$ be a Q-Koszul algebra with weight poset $\Gamma$.
Let $M$ be a non-negatively graded Q-linear $B$-module. Assume for each $
i\geq0$, that $M_{i}$ (regarded as a $B_{0}\cong B/B_{\geq 1}$-module) has a $\Delta^{0}$-filtration. Then each 
$$M_{\geq i}\langle-i\rangle:=\left(\bigoplus_{j\geq i}M_{j}\right)\langle -i\rangle$$
is a Q-linear $B$-module.
\end{prop}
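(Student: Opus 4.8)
The plan is to prove this by induction on $i$, with the case $i=0$ being the hypothesis that $M$ is Q-linear (noting $M_{\geq 0}\langle 0\rangle = M$). For the inductive step, I would pass from $M_{\geq i}\langle -i\rangle$ to $M_{\geq i+1}\langle -(i+1)\rangle$ by examining the short exact sequence of graded $B$-modules
\begin{equation*}
0 \to M_{\geq i+1} \to M_{\geq i} \to M_i \to 0,
\end{equation*}
where $M_i$ sits in pure grade $i$ (it is killed by $B_{\geq 1}$, hence is a $B_0$-module placed in degree $i$). Shifting by $-i$ gives $0\to M_{\geq i+1}\langle -i\rangle \to M_{\geq i}\langle -i\rangle \to M_i\langle -i\rangle \to 0$ with $M_i\langle -i\rangle$ in pure grade $0$. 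Applying $\grExt^\bullet_B(-,\nabla_0(\gamma)\langle r\rangle)$ produces a long exact sequence relating the Ext-groups of $M_{\geq i}\langle -i\rangle$, of $M_{\geq i+1}\langle -i\rangle$, and of $M_i\langle -i\rangle$.

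The key input is to control the two ``known" terms in this long exact sequence. First, $M_{\geq i}\langle -i\rangle$ is Q-linear by the inductive hypothesis, so $\grExt^n_B(M_{\geq i}\langle -i\rangle, \nabla_0(\gamma)\langle r\rangle)=0$ unless $n=r$. Second, $M_i\langle -i\rangle$ is a grade-$0$ $B$-module with a $\Delta^0$-filtration (by hypothesis on the $M_i$); since $B$ is Q-Koszul, the defining property \eqref{QKoszul} gives $\grExt^n_B(\Delta^0(\lambda),\nabla_0(\mu)\langle r\rangle)\neq 0\implies n=r$, and this ``diagonal" vanishing passes to any module with a $\Delta^0$-filtration placed in pure grade $0$ by the usual long-exact-sequence dévissage over the filtration. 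So $\grExt^n_B(M_i\langle -i\rangle,\nabla_0(\gamma)\langle r\rangle)=0$ unless $n=r$ as well. Feeding these two vanishing statements into the long exact sequence, a standard chase shows $\grExt^n_B(M_{\geq i+1}\langle -i\rangle,\nabla_0(\gamma)\langle r\rangle)=0$ unless $n=r$ or possibly $n=r+1$ (the connecting map contributes a shift of one in cohomological degree). Finally I would re-shift: $M_{\geq i+1}\langle -(i+1)\rangle = \bigl(M_{\geq i+1}\langle -i\rangle\bigr)\langle -1\rangle$, and shifting the second argument accordingly converts the ``$n=r$ or $n=r+1$" into exactly the Q-linearity condition $n=r$ for $M_{\geq i+1}\langle -(i+1)\rangle$; the bookkeeping here is exactly the familiar fact (as in \cite[Proof of Cor. 3.2]{ADL}, \cite[Proof of Cor. 3.2]{ADL}) that a shifted submodule of a linear module, with linear quotient in the right degree, is again linear.

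The main obstacle I anticipate is not any deep structural fact but making the degree bookkeeping airtight: one must check that the connecting homomorphism in the long exact sequence really does force the only surviving off-diagonal contribution to be $n=r+1$ (and not $n=r-1$), which uses that $M_i$ lives in grade $i$ \emph{above} $M_{\geq i+1}$, i.e. the short exact sequence is oriented so that the quotient is the bottom layer. I would also need to confirm that ``$M_i$ has a $\Delta^0$-filtration as a $B_0$-module" genuinely transfers the diagonal Ext-vanishing from standard modules to $M_i$; this is routine but relies on $\grExt^n_B(\Delta^0(\lambda),\nabla_0(\mu)\langle r\rangle)$ vanishing for $n\neq r$ for all pairs, which is precisely property \eqref{QKoszul} of a Q-Koszul algebra, together with the observation that placing a $B_0$-module in pure grade $0$ and computing $\grExt_B$ against $\nabla_0(\mu)\langle r\rangle$ is compatible with short exact sequences of $B_0$-modules. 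Everything else is a formal induction.
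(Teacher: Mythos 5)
Your setup (induction on $i$, the short exact sequence $0\to M_{\geq i+1}\to M_{\geq i}\to M_i\to 0$, and the long exact sequence against $\nabla_0(\mu)\langle r\rangle$) is the same as the paper's, but the finishing step is where the argument breaks. The long exact sequence chase by itself only says: if $\grExt^n_B(M_{\geq i+1}\langle -i\rangle,\nabla_0(\mu)\langle r\rangle)\neq 0$, then either the class is restricted from $\grExt^n_B(M_{\geq i}\langle -i\rangle,\nabla_0(\mu)\langle r\rangle)$ (forcing $r=n$ by the inductive hypothesis), or it maps nontrivially under the connecting map into $\grExt^{n+1}_B(M_i\langle -i\rangle,\nabla_0(\mu)\langle r\rangle)$ (forcing $r=n+1$; note the direction is $r=n+1$, not $n=r+1$ as you wrote). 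So the chase leaves two allowed diagonals, $r=n$ and $r=n+1$, whereas Q-linearity of $M_{\geq i+1}\langle -i-1\rangle$ is precisely the single diagonal $r=n+1$ for $M_{\geq i+1}\langle -i\rangle$. Your proposed fix --- re-shifting by $\langle -1\rangle$ --- cannot repair this: a shift is a bijection on the grading index and merely translates the union of two diagonals to the union of two diagonals (you get $r\in\{n-1,n\}$ for $M_{\geq i+1}\langle -i-1\rangle$), which is strictly weaker than Q-linearity and also unusable as the inductive hypothesis at the next stage.

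The missing idea is to kill the $r=n$ branch, i.e.\ to prove that the restriction map $\Ext^n_B(M_{\geq i},\nabla_0(\mu))\to \Ext^n_B(M_{\geq i+1},\nabla_0(\mu))$ is zero (equivalently, $\Ext^n_B(M_i,\nabla_0(\mu))\to\Ext^n_B(M_{\geq i},\nabla_0(\mu))$ is surjective), so that the connecting map embeds $\Ext^n_B(M_{\geq i+1},\nabla_0(\mu))$ into a group concentrated on the correct diagonal. This is where the $\Delta^0$-filtration hypothesis must be used on the \emph{submodule} $M_{\geq i+1}$, not only on the quotient $M_i$: by induction the source of the restriction map is concentrated in the graded piece with shift $n+i$, while $M_{\geq i+1}$ has a filtration with sections $\Delta^{0}(\gamma)\langle s\rangle$, $s\geq i+1\geq 1$, so a nonzero $\grExt^n_B(M_{\geq i+1},\nabla_0(\mu)\langle n+i\rangle)$ would give $\grExt^n_B(\Delta^{0}(\gamma),\nabla_0(\mu)\langle n+i-s\rangle)\neq 0$ with $n+i-s<n$, contradicting Q-Koszulity; hence the target vanishes on that diagonal and the map is zero. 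Your use of the filtration only for $M_i$ (to get the diagonal property of the quotient term) is correct but does not supply this surjectivity, and the parity of the bookkeeping you flagged as your ``main obstacle'' is not the real issue --- the branch that must be eliminated is the one coming from $M_{\geq i}$, not the connecting-map branch.
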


\begin{proof}
We proceed by induction on $i$. Since $M_{\geq0}\langle0\rangle=M$, the
statement is true for $i=0$ because $M$ is assumed to be Q-linear. Now fix 
$i\geq0$ and assume that $M_{\geq i}\langle-i\rangle$ is Q-linear. We will
show that $M_{\geq i+1}\langle-i-1\rangle$ is Q-linear. The short exact
sequence $0\to M_{\geq i+1}\to M_{\geq i}\to M_{i}\to0$ of $B$-modules gives, for any
$\mu\in\Gamma$, 
a long exact sequence 
\begin{equation*}
\cdots\to{\text{\textrm{Ext}}}_{B}^{n}(M_{i},\nabla_{0}(\mu))\overset{\alpha 
}{\longrightarrow}{\text{\textrm{Ext}}}^{n}_{B}(M_{\geq i},\nabla_{0}(\mu)) 
\overset{\delta}{\longrightarrow}{\text{\textrm{Ext}}}^{n}_{B}(M_{\geq
i+1},\nabla_0(\mu))\longrightarrow\cdots 
\end{equation*}
We claim the mapping $\alpha$ is surjective, or equivalently $\delta=0$.
Assume not, so that 
\begin{equation*}
\text{\textrm{ext}}^{n}_{B}(M_{\geq i+1},\nabla_{0}(\mu)\langle n\rangle
)\not =0, 
\end{equation*}
for some $\mu\in\Gamma$. Because $M_{\geq i+1}$ has a $\Delta^{0}$-filtration,
it follows, for some $s\geq1$, that 
$\grExt_{B}(\Delta_{0}(\gamma)\langle s\rangle,\nabla_{0}(\mu)\langle n\rangle
)\not =0$. Thus, $\grExt^n_{B}(\Delta^{0}(\gamma),\nabla
_{0}(\mu)\langle n-s\rangle)\not = 0$, contradicting the assumption that $B$
is Q-Koszul.

We conclude that ${\text{\textrm{Ext}}}^{n}_{B}(M_{\geq i+1},\nabla_{0}
(\mu))\subseteq{\text{\textrm{Ext}}}^{n+1}_{B}(M_{\geq i},\nabla_{0}(\mu))$, for all $\mu\in\Gamma$.
Hence, for any integer $m\geq0$, if 
$$\text{\textrm{ext}}^{n}_{B}(M_{\geq
i+1}\langle-i-1\rangle,\nabla_{0}(\mu)\langle m\rangle)\not =0,$$
 then $\grExt^{n+1}_{B}(M_{\geq i}\langle-i\rangle, \nabla_{0} (\mu)\langle
m+1\rangle)\not =0$. Thus, $n=m$, as required.
\end{proof}

A similar result holds for a non-positively graded Q-colinear module. The same is true for Corollaries
\ref{cor4.2} and \ref{cor4.3} below, and their generalizations at the end of \S6. We leave further details to the reader.

Now we return to the situation of the representation theory of our group $G$. Let $\Gamma$ be a finite poset ideal of $p$-regular weights, and consider
the graded quasi-hereditary algebra $B:=\wgr A_{\Gamma }$. It has standard modules $\Delta^{B}(\gamma):=\wgr\Delta(\gamma)$, $\gamma\in\Gamma$. Its costandard modules
are denoted $\nabla_{B}(\gamma)$. In addition, the grade 0 component of $
B_{0}$ of $B$ is quasi-hereditary with weight poset $\Gamma$ and with
standard (resp., costandard) modules $\Delta^{0}(\gamma):=\rDelta(\gamma)$ (resp., 
$\nabla_{0}(\gamma):=\rnabla(\gamma)$), $\gamma\in\Gamma$. If we assume the LCF holds, then the
algebra $B$ is standard Q-Koszul. Put $\Delta^B_i(\gamma):=\Delta^B(\gamma)_{\geq i}$, for
each integer $i\geq 0$. 

\begin{cor}\label{cor4.2}
Assume that $p\geq 2h-2$ is odd and that the LCF holds. If $\gamma\in\Gamma$
and $i\geq0$, $\Delta_{i}^{B}(\gamma)\langle -i\rangle$ is Q-linear. (Here $B:=\wgr A_{\Gamma}$.)
\end{cor}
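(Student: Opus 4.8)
The plan is to deduce Corollary \ref{cor4.2} directly from Proposition \ref{prop4.1} by verifying that the graded module $M:=\Delta^B(\gamma)=\wgr\Delta(\gamma)$ meets its two hypotheses: that $M$ is Q-linear over $B=\wgr A_\Gamma$, and that each graded piece $M_i$ is a $B_0$-module admitting a $\Delta^0$-filtration. Once both are checked, Proposition \ref{prop4.1} gives that $M_{\geq i}\langle -i\rangle = \Delta^B_i(\gamma)\langle -i\rangle$ is Q-linear, which is exactly the assertion. So the work is entirely in the two verifications.

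\emph{Q-linearity of $\wgr\Delta(\gamma)$.} First I would invoke Theorem \ref{secondmainconsequencethm3.7} (i.e.\ \cite[Thm. 3.7]{PS13}), whose first displayed implication reads $\grExt^n_{\wgr A}(\wgr\Delta(\lambda),\rnabla(\mu)\langle r\rangle)\neq 0\implies r=n$. Since $\nabla_0(\mu)=\rnabla(\mu)$ by the identification recorded just before the corollary, and $\rnabla(\mu)$ carries pure grade $0$ as a $B$-module, this is precisely the defining condition \eqref{Qlinearmodule} for $M=\wgr\Delta(\gamma)$ to be Q-linear. Here we use that $p\geq 2h-2$ is odd and the LCF holds, which are exactly the standing hypotheses under which Theorem \ref{secondmainconsequencethm3.7} was stated; these are also the hypotheses of the corollary, so nothing extra is needed. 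Non-negative gradedness of $\wgr\Delta(\gamma)$ is built into the forced-grading construction \eqref{wgr}.

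\emph{Each $M_i$ has a $\Delta^0$-filtration.} This is the step I expect to be the main obstacle, since it is the one not handed to us verbatim by an earlier theorem. The graded piece $M_i = (\wrad^i\wfa)\Delta(\gamma)/(\wrad^{i+1}\wfa)\Delta(\gamma)$ (reduced mod $p$), viewed as a $B_0$-module, should be identified with a radical-layer of $\Delta(\gamma)$ as a module for the small quantum / restricted algebra, and the point is that each such layer has a $\rDelta=\Delta^0$-filtration. The natural route is Theorem \ref{Weyl}: $\Delta(\lambda)$ has a $G_1$-radical-series-compatible filtration whose sections are $\Delta^p(\gamma)$, and under the LCF $\Delta^p(\gamma)=\rDelta(\gamma)=\Delta^0(\gamma)$. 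One must match the $G_1$-radical filtration of Theorem \ref{Weyl} with the $\wrad$-filtration defining $\wgr$ — this is where the oddness of $p$ and the parity relation \eqref{length} enter, ensuring the forced grading and the radical grading line up so that layers of one refine (or equal) layers of the other — and then read off that each $M_i$ is $\Delta^0$-filtered. I would either cite the relevant compatibility already established in \cite{PS13} (the proof of Theorem \ref{secondmainconsequencethm3.7} surely uses exactly such a layer analysis) or spell out that $\wgr\Delta(\gamma)$ has a graded $\Delta^0$-filtration as a $B$-module, whence each homogeneous component $M_i$, being spanned by the grade-$i$ parts of the filtration sections $\Delta^0(\nu)\langle d_\nu\rangle$, inherits a $\Delta^0$-filtration over $B_0$.

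With both hypotheses of Proposition \ref{prop4.1} in hand for $M=\Delta^B(\gamma)$, the proposition yields that $M_{\geq i}\langle -i\rangle=\Delta^B_i(\gamma)\langle -i\rangle$ is Q-linear over $B=\wgr A_\Gamma$ for every $i\geq 0$, which is the statement of Corollary \ref{cor4.2}. The only genuinely non-formal ingredient is the filtration-compatibility in the second verification; everything else is bookkeeping with the definitions and a direct appeal to Theorems \ref{Weyl} and \ref{secondmainconsequencethm3.7}.
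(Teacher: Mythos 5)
Your proposal is correct and follows essentially the same route as the paper: verify the two hypotheses of Proposition \ref{prop4.1} for $M=\wgr\Delta(\gamma)$ — Q-linearity from the graded Ext-vanishing theorem (you cite Theorem \ref{secondmainconsequencethm3.7}, whose first implication is exactly the needed statement; the paper's proof points to Theorem \ref{firstmainconsequencethm5.6}) and the $\Delta^0$-filtration of each graded layer from Theorem \ref{Weyl} — and then apply the proposition. The filtration-compatibility you flag is precisely what the appeal to Theorem \ref{Weyl} (under the LCF, with $\Delta^p=\rDelta=\Delta^0$) is meant to supply, so no new argument is required.
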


\begin{proof}
By Theorem \ref{firstmainconsequencethm5.6}, $\Delta^{B}(\gamma)$ is a Q-Koszul module. By Theorem \ref
{Weyl}, each 
\begin{equation*}
\Delta^{B}_{i}(\gamma)/\Delta^{B}_{i+1}(\gamma) 
\end{equation*}
has a $\Delta^{0}$-filtration. Thus, the hypotheses of Proposition \ref
{prop4.1} hold, and the proof is complete.
\end{proof}
 
\begin{cor}\label{cor4.3}
Assume that $p\geq 2h-2$ is odd and that the LCF holds. Let $\Gamma$ be a
poset ideal of $p$-regular dominant weights which is contained in the
Jantzen region $\Jan$.\footnote{$\Jan$ contains all restricted weights if and only if $p\geq 2h-3$.} Then $B:=\gr A_{\Gamma}$ is a
Koszul algebra and, given any $\gamma\in\Gamma$ and $i\geq 0$, the module
$\Delta _{i}^{B}(\gamma)$ is linear for $B$. In particular, both $\gr\Delta(\gamma)$ and its maximal submodule (shifted by $\langle-1\rangle$)
are linear modules for $\gr A_\Gamma$.
\end{cor}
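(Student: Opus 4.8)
The plan is to obtain this result as a specialization of Corollary \ref{cor4.2} (and, behind it, of Proposition \ref{prop4.1}), the key point being that inside the Jantzen region the forced grading on $A_\Gamma$ coincides with its radical grading and makes $B$ a genuine Koszul algebra, at which stage the notion of a Q-linear $B$-module collapses to that of an ordinary linear module.

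First I would assemble the structural input. Since $\Gamma\subseteq\Jan$ and the LCF holds, the remark preceding Corollary \ref{corollary3.8} identifies $B:=\gr A_\Gamma$ with $\wgr A_\Gamma$ and $\gr\Delta(\gamma)$ with $\wgr\Delta(\gamma)=\Delta^B(\gamma)$, while Corollary \ref{corollary3.8} gives that $B$ is Koszul. Hence $B_0=A_\Gamma/\rad A_\Gamma$ is semisimple, so the standard and costandard modules of this (now semisimple) quasi-hereditary grade-$0$ algebra are simply $\Delta^0(\gamma)=\rDelta(\gamma)\cong L(\gamma)$ and $\nabla_0(\gamma)=\rnabla(\gamma)\cong L(\gamma)$, for each $\gamma\in\Gamma$. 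Two things follow. The defining property (\ref{Qlinearmodule}) for a non-negatively graded $B$-module $M$ to be Q-linear now reads $\grExt^n_B(M,L(\gamma)\langle r\rangle)\neq 0\implies n=r$, which is exactly the condition that $M$ have a linear minimal graded projective resolution over the Koszul algebra $B$; so ``Q-linear'' and ``linear'' coincide for $B$-modules here. And every $B_0$-module is semisimple, hence trivially has a $\Delta^0$-filtration (a composition series, each factor an $L(\gamma)=\Delta^0(\gamma)$), so the grade-by-grade filtration hypothesis in Proposition \ref{prop4.1} is automatically satisfied in this setting.

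With this in hand I would simply apply Corollary \ref{cor4.2} to $B=\wgr A_\Gamma$: under the running hypotheses ($p\geq 2h-2$ odd, LCF), for every $\gamma\in\Gamma$ and $i\geq 0$ the module $\Delta^B_i(\gamma)\langle -i\rangle$ is Q-linear over $B$, hence linear by the previous paragraph. Unwinding the shift $\langle -i\rangle$, this says $\grExt^n_B(\Delta^B_i(\gamma),L(\mu)\langle s\rangle)\neq 0\implies s=n+i$ for all $\mu\in\Gamma$, which is the assertion that $\Delta^B_i(\gamma)$ is linear for $B$. For the ``in particular'' clause, $i=0$ recovers the linearity of $\gr\Delta(\gamma)=\Delta^B_0(\gamma)$ already recorded in Corollary \ref{corollary3.8}, and $i=1$ yields the linearity of $\Delta^B_1(\gamma)\langle -1\rangle$. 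Here one checks $\Delta^B_1(\gamma)=\Delta^B(\gamma)_{\geq 1}=B_{\geq 1}\cdot\Delta^B(\gamma)=\rad\bigl(\gr\Delta(\gamma)\bigr)$, using that $\gr\Delta(\gamma)$ is generated in degree $0$ and $B=\gr A_\Gamma$ carries its radical grading; since the head of $\gr\Delta(\gamma)$ is the simple module $L(\gamma)$, $\Delta^B_1(\gamma)$ is precisely the unique maximal submodule of $\gr\Delta(\gamma)$, so the shifted maximal submodule $\Delta^B_1(\gamma)\langle -1\rangle$ is linear as claimed.

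I do not expect a genuine obstacle: the argument is essentially a bookkeeping reduction to Corollary \ref{cor4.2}. The two places that need a little care are (i) observing that Koszulity of $B$ forces $\Delta^0(\gamma)$ and $\nabla_0(\gamma)$ to be the irreducible modules, which is what makes the $\Delta^0$-filtration hypothesis of Proposition \ref{prop4.1} vacuous and identifies ``Q-linear'' with ``linear'' for $B$; and (ii) keeping track of the degree shift $\langle -i\rangle$ when passing between the normalized (degree-$0$-generated) and the natural (degree-$i$-generated) forms of the truncated standard module, together with the identification of the $i=1$ truncation with $\rad(\gr\Delta(\gamma))$.
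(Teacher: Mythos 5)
Your proposal is correct and follows essentially the same route the paper intends: inside the Jantzen region the identification $\wgr A_\Gamma\cong\gr A_\Gamma$ (with $\rDelta(\gamma)\cong\rnabla(\gamma)\cong L(\gamma)$, so $B_0$ is semisimple and Q-linearity reduces to linearity) combined with Corollary \ref{corollary3.8} turns Corollary \ref{cor4.2} directly into the stated conclusion. Your bookkeeping of the shifts $\langle -i\rangle$ and the identification $\Delta^B_1(\gamma)=\rad(\gr\Delta(\gamma))$ with the maximal submodule matches the intended reading of the statement.
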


Maximal submodules of standard modules are especially interesting for the
study of the associated irreducible modules. The above corollary shows that
their ext groups with coefficients in irreducible modules are
especially well-behaved. We can prove a similar property for ext groups of these
modules (and any term of their radical series) with coefficients in
costandard modules. It is useful to discuss this before stating the next
theorem.

Let us say that a graded module $M$ for a standard Koszul algebra $B$ (with weight poset
$\Gamma$) is
{\it strongly linear} if the following property holds:
\begin{equation}\label{strong} \forall \gamma\in\Gamma, n\in{\mathbb N}, r\in{\mathbb Z}, \quad
\grExt_{B}^n(M,\nabla
_{B}(\gamma)\langle r\rangle )\neq 0\implies n=r.\end{equation}
There is an evident dual
notion of a strongly colinear module. By definition, the  (purely graded) irreducible
modules for $B$ are always strongly linear and strongly colinear. 

If 
$\Omega $ is a coideal in $\Gamma$, the stronge linearity  property of any
module is preserved upon passage to (graded versions of ) the natural
highest weight category associated to $\Omega$, and a similar statement
holds for  the strong colinearity property. In more detail,  the passage is
obtained by an exact additive functor $j^*:B{\text{\rm -mod}}\to eBe{\text{\rm -mod}}$, $M\mapsto j^*M=eM$, obtained by multiplication by a grade 0 idempotent $e\in B$. The functor $j^*$ maps standard (resp.,
costandard) modules $\Delta^B(\gamma)$ (resp., $\nabla_B(\gamma)$) for $\gamma\in\Omega$ to the
corresponding standard and costandard modules in $eBe$-mod. In addtion, the functor $j^*$ admits a left exact right adjoint $j_*:=\Hom_B(eB,-)$ which carries any costandard
module $\nabla_{eBe}(\gamma)$, $\gamma\in\Omega$, in $eBe$-mod to the corresponding costandard module $\nabla_B(\gamma)$ in $B$-mod. Thus, for any $B$-module $E$ and $\gamma \in \Omega $, $j^*$ induces
an isomorphism 
\begin{equation*}
\Ext_{B}^{n}(E,\nabla _{B}(\gamma ))\overset\sim\rightarrow \Ext_{eBe}^{n}(eE,e\nabla _{B}(\gamma ))\cong 
\Ext_{eBe}^{n}(eE,\nabla _{eBe}(\gamma ))
\end{equation*}
  Dually, the strong colinearity property is similarly
preserved by $j^*$ (which admits a right exact adjoint $j_!$ taking costandard modules to costandard modules). As one consequence (using both strong linearity properties of irreducible
modules), we can deduce that standard and costandard modules $eBe$-mod are linear and colinear,
respectively, which implies that the algebra $eBe$ is standard Koszul, and, in particular, Koszul.\footnote{This Koszulity implication goes back to 
Irving \cite{Irving}, as discussed in \cite[p. 345]{PSZ2}. It may also deduced from graded Grothendieck
group arguments, as in \cite[\S3, appendix]{CPS1}. An ungraded analogue is given in 
\cite[Thm. 1]{ADL}.}  As another consequence of
the displayed isomorphism, we can deduce that,  
{\it for any strongly linear $B$-module $M$, the modules $M_{\geq
i}\langle -i\rangle$ are also strongly linear.}
This is seen by choosing, for a given $\gamma \in \Gamma$, a coideal $
\Omega $ with $\gamma $ minimal in $\Omega$. The minimality implies 
$\nabla_{eBe}(\gamma )$ is irreducible. Now the argument of Proposition \ref{prop4.1} can be
applied to $eBe$ for this fixed $\gamma $, using the modules $e(M_{\geq
i})=(eM)_{\geq i}$, to inductively deduce the strong linearity. The dual
property, for strongly colinear $B$-modules, may be deduced by a dual
argument.

In particular, Corollary \ref{cor4.3} holds if ``linear" is replaced by ``strongly linear." Explicitly,

\begin{prop}\label{stlin} Assume the hypotheses of Corollary \ref{cor4.3}. For $\gamma\in\Gamma$ and $i\geq 0$, 
 each $\Delta_i^B(\gamma)$ is strongly linear. A dual statement holds for costandard modules, using
 strong colinearity. \end{prop}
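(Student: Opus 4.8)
The plan is to run the same induction as in Proposition~\ref{prop4.1}, but now carried out inside $B$ itself (not an auxiliary $eBe$), with the role of $\nabla_0(\mu)$ played by the graded costandard module $\nabla_B(\gamma)$, and with the role of ``$B$ is Q-Koszul'' played by the hypothesis that $B$ is \emph{standard} Koszul (which holds under the hypotheses of Corollary~\ref{cor4.3}, since $\gr A_\Gamma$ has a graded Kazhdan--Lusztig theory and hence is standard Koszul by \cite{CPS1}). The base case is that $\Delta^B_0(\gamma)=\Delta^B(\gamma)=\gr\Delta(\gamma)$ is strongly linear; this is the content of the second displayed line of Theorem~\ref{secondmainconsequencethm3.7} (together with $\wgr A_\Gamma\cong\gr A_\Gamma$ in the Jantzen region), which says precisely that $\grExt^n_{\gr A}(\gr\Delta(\lambda),\nabla_{\gr A}(\mu)\langle r\rangle)\neq 0\implies r=n$. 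Note $\Delta^B(\gamma)$ is non-negatively graded with head in grade $0$.

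For the inductive step, fix $i\geq 0$, assume $\Delta^B_i(\gamma)\langle-i\rangle$ is strongly linear, and consider the short exact sequence $0\to \Delta^B_{i+1}(\gamma)\to \Delta^B_i(\gamma)\to \Delta^B_i(\gamma)/\Delta^B_{i+1}(\gamma)\to 0$ of graded $B$-modules. By Corollary~\ref{cor4.3} (or directly by Corollary~\ref{corollary3.8} and Theorem~\ref{Weyl}, via $\gr\Delta(\gamma)\cong\wgr\Delta(\gamma)$ and the compatibility of the $\Delta^p$-filtration with the radical series), each section $\Delta^B_i(\gamma)/\Delta^B_{i+1}(\gamma)$ is a direct sum of $\Delta^0(\delta)$'s placed in grade $i$; since $\Gamma\subseteq\Jan$ and the LCF holds, $\Delta^0(\delta)=\rDelta(\delta)=\Delta^p(\delta)$, which is irreducible in $B_0$-mod precisely when $\delta$ is restricted --- but in general $\Delta^0(\delta)$ is just the standard module of the semisimple-on-top algebra $B_0$. (In the Koszul case, $B_0$ is semisimple, so $\Delta^0(\delta)$ \emph{is} irreducible; this is what makes the argument go through cleanly.) Applying $\grExt^\bullet_B(-,\nabla_B(\mu)\langle m\rangle)$ gives a long exact sequence; the key claim is that the connecting map $\grExt^n_B(\Delta^B_{i+1}(\gamma),\nabla_B(\mu)\langle m\rangle)\to\grExt^{n+1}_B(\Delta^B_i(\gamma),\nabla_B(\mu)\langle m\rangle)$ is injective, i.e. that the preceding map $\grExt^n_B(\Delta^B_i(\gamma)/\Delta^B_{i+1}(\gamma),\nabla_B(\mu)\langle m\rangle)\to\grExt^n_B(\Delta^B_i(\gamma),\nabla_B(\mu)\langle m\rangle)$ vanishes on the relevant graded pieces. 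This vanishing follows because $\Delta^B_i(\gamma)/\Delta^B_{i+1}(\gamma)$ is a sum of copies of irreducibles $L(\delta)\cong\Delta^0(\delta)$ concentrated in grade $i$, so a nonzero class in $\grExt^n_B$ of it against $\nabla_B(\mu)\langle m\rangle$ would force $\grExt^n_B(\Delta^0(\delta)\langle i\rangle,\nabla_B(\mu)\langle m\rangle)\neq 0$, hence $\grExt^n_B(\Delta^0(\delta),\nabla_B(\mu)\langle m-i\rangle)\neq 0$, which by the first line of (\ref{conditions1}) (standard Koszulity of $B$) forces $n=m-i$; feeding this back, a degree/shift bookkeeping with the grading on $\Delta^B_i(\gamma)/\Delta^B_{i+1}(\gamma)$ being pure of degree $i$ and the head of $\Delta^B_i(\gamma)$ being in degree $i$ shows the composite $\grExt^n_B(\Delta^B_i(\gamma)/\Delta^B_{i+1}(\gamma),\nabla_B(\mu)\langle m\rangle)\to\grExt^n_B(\Delta^B_i(\gamma),\nabla_B(\mu)\langle m\rangle)$ is forced to be zero in exactly the bidegrees $(n,m)$ with $n=m$. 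Concluding as in Proposition~\ref{prop4.1}: if $\grExt^n_B(\Delta^B_{i+1}(\gamma)\langle -i-1\rangle,\nabla_B(\mu)\langle m\rangle)\neq 0$ then $\grExt^{n+1}_B(\Delta^B_i(\gamma)\langle -i\rangle,\nabla_B(\mu)\langle m+1\rangle)\neq 0$, so by the inductive hypothesis $n+1=m+1$, i.e. $n=m$, establishing strong linearity of $\Delta^B_{i+1}(\gamma)\langle -i-1\rangle$.

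The dual statement for costandard modules, using strong colinearity and $\nabla^B_i(\mu)$, is obtained by applying the canonical graded duality (which interchanges $\Delta^B(\gamma)\leftrightarrow\nabla_B(\gamma)$, $\Delta^0(\gamma)\leftrightarrow\nabla_0(\gamma)$, linear $\leftrightarrow$ colinear, and standard Koszul $\leftrightarrow$ standard Koszul) and invoking the first line of Theorem~\ref{secondmainconsequencethm3.7} as the dual base case; alternatively one repeats the argument verbatim with arrows reversed, exactly as indicated in the discussion preceding the proposition (``The dual property, for strongly colinear $B$-modules, may be deduced by a dual argument.'').

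I expect the main obstacle to be the bookkeeping in the vanishing-of-the-connecting-map step: one must be careful that $\Delta^B_i(\gamma)/\Delta^B_{i+1}(\gamma)$ really is a \emph{semisimple} $B_0$-module concentrated in a single grade $i$ (which needs $B_0$ semisimple, i.e. the Koszul --- not merely Q-Koszul --- hypothesis of Corollary~\ref{cor4.3}, so that the $\Delta^0(\delta)=\rDelta(\delta)$ appearing are irreducible), and that the shift conventions make the ``$n=r$'' condition propagate correctly through the long exact sequence rather than drifting by one. This is the only place where the argument genuinely uses that we are in the Jantzen region and not just assuming Q-Koszulity; everything else is a transcription of the proof of Proposition~\ref{prop4.1} with $\nabla_0$ replaced by $\nabla_B$ and the Q-Koszul inequality replaced by the first inequality of (\ref{conditions1}).
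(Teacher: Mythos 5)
Your route is genuinely different from the paper's, and in outline it is viable. The paper does not rerun the induction of Proposition \ref{prop4.1} inside $B$ with $\nabla_B(\mu)$ as coefficient; instead, for each fixed $\mu$ it chooses a coideal $\Omega\subseteq\Gamma$ with $\mu$ minimal, passes to $eBe$ via $j^*$ and the adjunction isomorphism $\Ext^n_B(E,\nabla_B(\mu))\cong\Ext^n_{eBe}(eE,\nabla_{eBe}(\mu))$, where $\nabla_{eBe}(\mu)$ is irreducible, and then applies Proposition \ref{prop4.1} verbatim in $eBe$. Your direct argument in $B$ trades this reduction for the observation that, since $\Gamma\subseteq\Jan$ and the LCF holds, $B_0$ is semisimple, so each layer $M_i:=\Delta^B_i(\gamma)/\Delta^B_{i+1}(\gamma)$ is a sum of irreducibles $L(\delta)\langle i\rangle$, and irreducibles are strongly linear by the \emph{second} line of (\ref{conditions1}) (equivalently the second line of Theorem \ref{secondmainconsequencethm3.7}) --- note you cite the first line, which concerns $\grExt^n_B(\Delta^B(\lambda),\nabla_0(\mu)\langle r\rangle)$ and is not what you need. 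Also, your base case is true but mis-attributed: strong linearity of $\Delta^B(\gamma)$ itself is immediate from graded quasi-heredity ($\grExt^n_B(\Delta^B(\gamma),\nabla_B(\mu)\langle r\rangle)=0$ for $n>0$, and the $n=0$ Hom sits in shift $0$); the cited line of Theorem \ref{secondmainconsequencethm3.7} is about $\rDelta(\gamma)$ and belongs in the inductive step.

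The inductive step as written has a real error in the long exact sequence bookkeeping. For $0\to\Delta^B_{i+1}(\gamma)\to\Delta^B_i(\gamma)\to M_i\to 0$, the connecting map goes $\grExt^n_B(\Delta^B_{i+1}(\gamma),-)\to\grExt^{n+1}_B(M_i,-)$, not to $\grExt^{n+1}_B(\Delta^B_i(\gamma),-)$, and its image is exactly the kernel of $\grExt^{n+1}_B(M_i,-)\to\grExt^{n+1}_B(\Delta^B_i(\gamma),-)$; so your final implication, that nonvanishing for $\Delta^B_{i+1}(\gamma)\langle-i-1\rangle$ in bidegree $(n,m)$ forces nonvanishing for $\Delta^B_i(\gamma)\langle-i\rangle$ in bidegree $(n+1,m+1)$, does not follow, and the inductive hypothesis cannot be invoked there. (Your ``key claim'' also conflates two maps: injectivity of the connecting map is equivalent to the vanishing of the restriction map $\grExt^n_B(\Delta^B_i(\gamma),-)\to\grExt^n_B(\Delta^B_{i+1}(\gamma),-)$, i.e.\ to \emph{surjectivity}, not vanishing, of the map out of $\grExt^n_B(M_i,-)$. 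The paper's own write-up of Proposition \ref{prop4.1} contains the same slip, so do not copy it.) The repair is short and uses only your ingredients: if $\grExt^n_B(\Delta^B_{i+1}(\gamma),\nabla_B(\mu)\langle r\rangle)\neq 0$, the exact sequence forces nonvanishing either of $\grExt^n_B(\Delta^B_i(\gamma),\nabla_B(\mu)\langle r\rangle)$, giving $r=n+i$ by induction, or of $\grExt^{n+1}_B(M_i,\nabla_B(\mu)\langle r\rangle)$, giving $r=n+1+i$ by strong linearity of the $L(\delta)\langle i\rangle$; and the first option is excluded because $\Delta^B_{i+1}(\gamma)$ is filtered by irreducibles placed in grades $s\geq i+1$, which forces $r\geq n+i+1$. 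With this correction your argument is sound, and the dual statement goes through as you indicate.
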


This strong linearity property, for maximal submodules of standard modules (and their
radical series, each appropriately shifted in grade) is new. Here is another new result for standard Koszul
algebras, applicable to maximal submodules of standard modules and their
radicals series on the ``strong linearity" side, and to dual notions for
costandard modules on the ``strong colinearity" side.

\begin{thm}
Suppose $B$ is a standard Koszul algebra. Let $M$
(resp., $N$) be a strongly linear (resp., strongly colinear) module for $B$. 
Then, for all integers $n$ and $r$, $\grExt_{B}^{n}(M,N\langle r\rangle )\not=0\implies n=r$.
\end{thm}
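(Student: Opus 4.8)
The plan is to prove the two one‑sided vanishings $\grExt_B^n(M,N\langle r\rangle)=0$ for $n>r$ and for $n<r$ separately; the first is routine, the second is the real content.

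\textbf{The bound $n>r$.} Since $M$ is strongly linear it is in particular linear, so its minimal graded projective resolution $P_\bullet\to M$ is linear, i.e.\ each $P_i$ is a direct sum of indecomposable projectives shifted by $\langle-i\rangle$; as $B$ is graded quasi‑hereditary, $P_i$ therefore carries a $\Delta^B$‑filtration all of whose sections have the form $\Delta^B(\gamma)\langle s\rangle$ with $s\ge i$. Computing $\grExt^\bullet_B(M,N\langle r\rangle)$ from $P_\bullet$, the term $\Hom_B(P_i,N\langle r\rangle)$ is built, by dévissage along this filtration, out of subquotients of the groups $\Hom_B(\Delta^B(\gamma)\langle s\rangle,N\langle r\rangle)=\grExt^0_B(\Delta^B(\gamma)\langle s-r\rangle,N)$, which vanish unless $s=r$ by the strong colinearity of $N$. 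Since no $\Delta^B$‑section of $P_i$ has degree $<i$, we get $\Hom_B(P_i,N\langle r\rangle)=0$ for $i>r$, whence $\grExt^n_B(M,N\langle r\rangle)=0$ for $n>r$.

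\textbf{The bound $n<r$.} I would induct on the injective dimension of $N$, which is finite because $B$ has finite global dimension. If $N$ is injective, strong colinearity forces all $\nabla_B$‑sections of $N$ to sit in degree $0$, so $N\cong\bigoplus\nabla_B(\gamma)$ (those $\gamma$ with $I^B(\gamma)=\nabla_B(\gamma)$); then $\grExt^{\ge1}_B(M,N\langle r\rangle)=0$, and $\Hom_B(M,N\langle r\rangle)=\bigoplus\Hom_B(M,\nabla_B(\gamma)\langle r\rangle)=0$ for $r\ne0$ by strong linearity of $M$, as required. For the inductive step, truncate the minimal injective resolution, $0\to N\to I^0\to N'\to 0$ with $I^0$ injective; one checks, using the essentiality of $N\hookrightarrow I^0$ and a grading estimate (the argument is parallel to the analysis preceding Proposition \ref{stlin}), that $N'\langle1\rangle$ is again strongly colinear, of smaller injective dimension. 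The long exact sequence for $\grExt^\bullet_B(M,-\langle r\rangle)$, together with $\grExt^{\ge1}_B(M,I^0\langle r\rangle)=0$, identifies $\grExt^{n+1}_B(M,N\langle r\rangle)$ with $\grExt^{n}_B(M,(N'\langle1\rangle)\langle r-1\rangle)$ for $n\ge1$ and exhibits $\grExt^1_B(M,N\langle r\rangle)$ as a quotient of $\Hom_B(M,N'\langle r\rangle)$, so the inductive hypothesis settles all $n\ge1$. The residual case is $n=0$: one must show $\Hom_B(M,N\langle r\rangle)=0$ for $r\ne0$. For $r<0$ this is immediate from the gradings ($M$ is generated in degree $0$ while $N\langle r\rangle$ lives in degrees $\le r<0$); for $r>0$ it is the crux.

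\textbf{The crux.} This $n=0$, $r>0$ assertion is precisely where strong linearity of $M$ and strong colinearity of $N$ must be used \emph{together} to force a lower bound on homological degree, which the one‑sided arguments above cannot give. The clean way to reach it is to pass to the Koszul dual $B^{!}$: because $B$ is standard Koszul, $B^{!}$ is again a (standard) Koszul, quasi‑hereditary algebra, and under Koszul duality the strongly linear $M$ and the strongly colinear $N$ correspond to $B^{!}$‑modules $\widetilde M$ and $\widetilde N$ carrying, respectively, a $\Delta^{B^{!}}$‑filtration and a $\nabla^{B^{!}}$‑filtration; keeping track of the grading‑versus‑homological‑degree shift, $\grExt^n_B(M,N\langle r\rangle)$ becomes $\grExt^{\,n-r}_{B^{!}}(\widetilde M,\widetilde N)$ up to an internal twist, which vanishes for $n-r<0$ since both arguments are honest modules, and for $n-r>0$ since $\grExt^{>0}_{B^{!}}$ between a $\Delta$‑filtered and a $\nabla$‑filtered module is zero; this simultaneously re‑proves both halves. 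The main obstacle is thus to pin down the precise compatibility of Koszul duality with the highest‑weight structures and with the notions of (strong) (co)linearity — the normalizations that turn ``strongly linear'' into ``$\Delta$‑filtered on the dual side'' — a bookkeeping‑heavy but essentially standard package, in the spirit of the Irving/ADL/CPS results already invoked in \S4 for the Koszulity of $eBe$.
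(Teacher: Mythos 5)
The decisive problem is the inductive step of your $n<r$ argument: the claim that the first cosyzygy $N'$ of a strongly colinear module $N$, shifted by $\langle 1\rangle$, is again strongly colinear. This fails in general, already for $N=L(\gamma)$ a pure-grade-$0$ simple module (which is strongly colinear by definition). Strong colinearity of $N'\langle 1\rangle$ in cohomological degree $0$ requires $\grHom_B(\Delta^B(\mu)\langle t\rangle, N')=0$ for all $t\neq -1$. But $\grHom_B(\Delta^B(\mu)\langle t\rangle, I(\gamma))\neq 0$ exactly when the injective envelope $I(\gamma)$ has a $\nabla_B$-section $\nabla_B(\mu)\langle t\rangle$, which by graded reciprocity happens whenever $[\Delta^B(\mu):L(\gamma)\langle -t\rangle]\neq 0$; for $t\leq -2$ such a map cannot have image inside $\soc I(\gamma)=L(\gamma)$ (its image has head $L(\mu)\langle t\rangle$), so it survives in $N'=I(\gamma)/L(\gamma)$. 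Hence as soon as some standard module contains $L(\gamma)$ as a composition factor in internal degree $\geq 2$ --- the typical situation, e.g.\ the regular block of the category $\sO$ of $\mathfrak{sl}_3$ with its Koszul grading --- the module $N'\langle 1\rangle$ fails the degree-$0$ test and your induction collapses. The statement you cite as a model (that $M_{\geq i}\langle -i\rangle$ stays strongly linear) is about truncations of $M$ itself, proved by passing to $eBe$, and does not transfer to (co)syzygies. Moreover the residual case $n=0$, $r>0$, which you correctly identify as the crux, is not proved at all: it is delegated to a Koszul-duality dictionary (``strongly linear corresponds to $\Delta^{B^{!}}$-filtered,'' with the asserted regrading of $\grExt$), and that normalization package is precisely what would have to be established; none of the references invoked in \S4 supplies it in the form you need. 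So what remains solid in your proposal is only the easy half: the $n>r$ vanishing via $\Delta^B$-filtered projective resolutions (where, incidentally, you do not need ``strongly linear $\implies$ linear'' --- which the paper derives as a \emph{consequence} of this theorem --- since minimality plus semisimplicity of $B_0$ already gives $P_i$ generated in degrees $\geq i$), together with the trivial $r<0$ and injective base cases.

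The paper's own argument is entirely different: it is a parity argument. Assuming $B$-mod has a Kazhdan-Lusztig theory with length function $\ell$ (and, in the special case it treats, that the constituents of the head $M_0$ share a parity, as do those of the socle $N_0$), it establishes surjectivity of $\Ext^n_B(M_0,\nabla_B(\gamma))\to\Ext^n_B(M,\nabla_B(\gamma))$ and its dual, deduces even-odd vanishing, places $M, M_0$ and $M_{\geq 1}$ (resp.\ $N, N_0$ and $N/N_0$) in filtered derived subcategories $\sE^L$, $\sE^L[1]$ (resp.\ $\sE^R$, $\sE^R[1]$) of opposite parities, concludes that $\Ext^n_B(M,N)$ and $\Ext^n_B(M_{\geq 1},N)$ (and likewise $\Ext^n_B(M_0,N)$ and $\Ext^n_B(M_0,N/N_0)$) cannot be simultaneously nonzero, and thereby reduces to the trivial case $M=M_0$, $N=N_0$; the fully general statement is referred to the $\Z/2$-based Kazhdan-Lusztig machinery of \cite{PSZ2}. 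Without either this parity mechanism or a genuinely proved Koszul-duality dictionary, the hard half of the theorem is missing from your proposal.
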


\begin{proof}
We give the proof only in a special case, which is likely to be more familiar. First, assume that $B$-mod has a Kazhdan-Lusztig theory
in the sense of \cite{CPS1}. If $\Gamma$ is the poset for $B$, this supposes there is a length function $\ell:\Gamma\to
\mathbb{Z}$, used mod 2 to assign parities to modules indexed by elements $\gamma\in\Gamma $.
More explicitly, it is required that 
$$\forall n\in{\mathbb N}, \gamma,\mu\in\Gamma,\quad \begin{cases}\Ext^n_B(\Delta^B(\gamma),L_B(\mu))\not=0\implies n\equiv \ell(\lambda)-\ell(\mu)\,\,\mod 2;\\
\Ext^n_B(L_B(\mu),\nabla_B(\mu))\not=0\implies n\equiv\ell(\lambda)-\ell(\mu)\,\,\mod 2.\end{cases}$$
    (In the
presence of the Koszulity property for $B,$ the existence of such a
Kazhdan-Lusztig theory implies $B$ is a standard Koszul algebra, and many of the known
examples arise this way. See \cite{CPS1}, especially the appendix to \S3 and the argument for Theorem 
2.4.)  Second, in addition to the Kazhdan-Lusztig theory, we
will assume an additional property of $M$ and $N$, namely, that all the irreducible
constituents of the head of $M$ (which may be identified with $M_0$) all
share a common parity (with regard to $\ell$), and that a similar parity sharing occurs for
irreducible constituents of the socle $N_0$ of $N.$ 

These additional conditions can all be 
avoided by using the
 somewhat more sophisticated notion of a ${\mathbb Z}/2$-based Kazhdan-Lusztig theory in \cite{PSZ2}. 

Returning to our chosen context, for $\gamma \in \Gamma 
$, observe that each map 
$$\Ext_{B}^{n}(M_{0},\nabla _{B}(\gamma
))\rightarrow\Ext_{B}^{n}(M,\nabla _{B}(\gamma ))$$ is
surjective. (Equivalently, the map $\Ext_{B}^{n}(M,\nabla
_{B}(\gamma ))\rightarrow\Ext_{B}^{n}(M_{\geq 1},\nabla
_{B}(\gamma ))$ is zero. But this can be deduced by passing to a suitable
algebra $eBe$ with $e\nabla _{B}(\gamma )$ irreducible, and arguing with
natural isomorphism induced by adjoint functors. See the argument in the
paragraph preceding the theorem, and the proof of Proposition \ref{prop4.1}.) This
gives the ungraded groups $\Ext_B^n(M,\nabla
_{B}(\gamma ))$ an even-odd vanishing property, the same as that possesed
by $M_{0}$ or any of its irreducible constituents. A similar even-odd
vanishing property is obtained dually for $N.$ In particular, this yields
the important conclusion (from the derived category arguments of \cite{CPS1}) that $M$ and $N$, respectively, belong to certain filtered derived
subcategories, each associated with a particular parity of length function. 
$M_{0}$ and $M$ belong to the same subcategory $\mathcal{E}^{L}$ or  $
\mathcal{E}^{L}[1]$ and $N_{0}$ belongs to the subcategory, $\mathcal{E}^{R}$
or  $\mathcal{E}^{R}[1],$as $N$. However, $M_{1}$ and $M_{\geq 1}$
belong to the subcategory $\mathcal{E}^{L}[1]$ or $\mathcal{E}^{L}$
associated with the opposite parity to that of $M_{0}$ and $M$.  (We know
from above $M_{\geq 1}\langle -1\rangle$ is strongly linear. Also, $\Ext^1$ nonvanishing
between irreducible modules forces them to have opposite parity. Dual
considerations apply for $N/N_0$ to give it a parity opposite to that of 
$N_0$.) We do not discuss in detail the meaning of these parity differences
other than to note they imply $\Ext_B^n(M,N)$ and $\Ext_{B}^{n}(M_{\geq
1},N)$ cannot be simultaneously nonzero. Also, $\Ext_{B}^{n}(M_{0},N)$ and $\Ext_{B}^{n}(M_{0},N/N_{0})$ cannot be simultaneously nonzero.  

 Next, we prove the theorem for the case $M=M_0$. The theorem
is certainly true in this case, if $N=N_0$. Suppose $\grExt_{B}^{n}(M_{0},N\langle r\rangle)\neq 0$
Then  $\Ext_{B}^{n}(M_{0},N)\neq 0$, so $\Ext_{B}^{n}(M_{0},N/N_{0})=0.$
Consequently, then natural map $\Ext_{B}^{n}(M_{0},N_{0})\rightarrow \Ext_{B}^{n}(M_{0},N)$ is surjective, inducing a surjection 
$\grExt_{B}^{n}(M_{0},N_{0}\langle r\rangle)\rightarrow \grExt_{B}^{n}(M_{0},N\langle r\rangle).$  Hence, it follows that
$\grExt_{B}^{n}(M_{0},N_{0}\langle r\rangle)\neq 0,$ and so $n=r$, in this case.  

 Similarly, the theorem for general $M$ follows from the $M=M_{0}$
case, using the fact that Ext$_{B}^{n}(M,N)$ and Ext$_{B}^{n}(M_{\geq 1 },N)$
cannot be simultaneously nonzero. 

This completes the proof for the case we have chosen. A
general proof along roughly similar line, though working with parity
considerations on ext groups, and appropriate categories  $\sE^{L'}$ and $\sE^{R'}$ may
be obtained using \cite{PSZ2}, but we omit further details.
\end{proof}

Several remarks are in order. First, it is interesting to note the above
theorem implies that "strongly linear" modules are also ``linear," with a
similar property for ``strongly colinear" modules. (That is, these modules are also colinear.)
Second, all of the above results for standard Koszul algebras appear to
generalize to the standard Q-Koszul case, though we have not checked all
details. Third, it is certainly not necessary to assume positive
characteristic in the results above that are stated using a condition on $p$,
and these results hold, mutatis mutandis, for the BGG categories 
$\mathcal O$. In fact, in that case, the algebra $B=\gr A$ is
quasi-hereditary, because it is isomorphic to $A$.

\section{A graded Ext result for $G_1T$} In this section, $\Gamma$ is a stable poset ideal of $p$-regular dominant weights. Suppose the
Kazhdan-Lusztig property (\ref{KLproperties}) holds for all $\gamma\in \Gamma$. Assume that $p>h$. Then, we can adopt an  
argument given in \cite{IKL} to show that if $\lambda,\mu\in \Gamma+pX$, then
\begin{equation}\label{firstcongruence}\Ext^n_{G_1T}(L(\lambda),L(\mu))\not=0\implies \ell(\lambda)-\ell(\mu)\equiv n\,\,\mod \, 2\end{equation}
In more detail, the argument for \cite[Thm. 5.6]{IKL} is an inductive argument on lengths of restricted weights, starting with weights in the lowest dominant alcove $C^+$. Each $W_p$-orbit (under the "dot" action) of $p$-regular weights must contain such a weight, as will any nonempty intersection of such an orbit of $\Gamma$. Then the inductive argument works entirely with restricted weights, increasing their lengths by 1 at each step of the argument. Every restricted weight is accessible in such a process. The stability assumption on $\Gamma$ guarantees that, whenever any one of its restricted weights is accessed by such a sequence, each element $\nu$ of the accessing sequence also belongs to $\Gamma$. This just gets us to the combinatorial set-up, but we can also show inductively that each $L(\nu)$ satisfies the necessary even-odd vanishing condition to define an element of the ``enriched" Grothendieck group used in the proof: This is true for $\nu\in C^+$ by \cite[Thm. 3.12.1]{IKL}. If true for one $L(\nu)$ in an ascending sequence, it will be true for the next, call it $L(\nu')$, if the latter is a direct summand of the ``middle'' of a module obtained from a standard wall-crossing procedure. The latter is completely compatible with its analog for the larger group $G$,
 see \cite[Thm. 5.2(b)]{IKL}, but it is easier for it to be completely reducible for $G_1T$ than for $G$. If we assumed $p\geq 2h-3$, we could argue that $L(\nu')$ was the direct summand of the ``middle" (which would even be completely reducible) from validity of the LCF for weights of $\Gamma$ in the Jantzen region.  We could then complete the induction and claim \cite[Thm. 5.7]{IKL} held for $\Gamma$, and consequently equation (\ref{firstcongruence}) above (arguing further as in \cite[Thm. 5.8]{IKL}.
 
    However, we will assume only that $p>h$, and argue differently to obtain the same complete reducibility at the $G_1T$ level. We broaden the induction, making use of a consequence of (\ref{firstcongruence}) in this section, namely (\ref{surj}) below. Let $\Gamma_0$ be the set of all weights in $\Gamma$ whose lengths are at most that of $\nu$ in the previous paragraph. We can assume that \cite[Thm. 5.7]{IKL} holds for all restricted irreducible modules for highest weights in $\Gamma_0$. It follows that \cite[Thm. 5.8]{IKL} and (\ref{firstcongruence}) hold for all $p$-translates of restricted weights in $\Gamma_0$, and further consequences noted in this section, such as (\ref{surj}). In particular, we can equate $\Ext^1$-calculations for ${\mathcal C}_k$ and ${\mathcal C}_K$ between irreducible modules with such highest weights. The character of the ``middle" is the same for ${\mathcal C}_k$ as that for its ${\mathcal C}_K$ analog. Also, since the LCF is assumed for $\Gamma$, the characters of irreducible ${\mathcal C}_K$ modules appearing in the ``middle" all reduce ``mod $p$" to irreducible ${\mathcal C}_k$ modules (even that of $L(\nu')$). Now for any ${\mathcal C}_K$ irreducible module $L(\omega)$ appearing in the ``middle", its multiplicity can be determined as the dimension of $\Ext^1_{{\mathcal C}_K}(L(\nu),L(\omega))$, or of the same $\Ext^1$ group with its two arguments reversed. For $\omega\not=\nu'$, this $\Ext^1$ group has the same dimension, by application of (\ref{surj}) for $\Gamma_0$, as that for ${\mathcal C}_k$. Consequently,  all irreducible ${\mathcal C}_k$ composition factors $L(\omega)$, $\omega\not= \nu'$, of the ``middle" appear with their full multiplicity in both its head and socle. For $\omega = \nu'$ the multiplicity of $L(\nu')$ is 1. It follows that the ``middle" is completely reducible, and the induction is complete.

\begin{lem} \label{evenodd} (Even-odd vanishing) Assume that $p>h$ and let $\Gamma$ be a stable poset ideal of $p$-regular dominant weights. Assume that the Kazhdan-Lusztig property holds for all $\gamma\in\Gamma$; see (\ref{KLproperties}). Then for restricted weights $\lambda,\mu\in \Gamma$ and $n\in\mathbb Z$,
$$ \Ext^n_{G_1}(L(\lambda),L(\mu))\not=0\implies \Ext^{n\pm 1}_{G_1}(L(\lambda),L(\mu))=0.$$
\end{lem}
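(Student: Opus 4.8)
The plan is to deduce Lemma~\ref{evenodd} from the $G_1T$-level even-odd vanishing statement \eqref{firstcongruence}, which the preceding discussion establishes for all $p$-translates of restricted weights in $\Gamma$ under the stated hypotheses ($p>h$, $\Gamma$ stable, Kazhdan--Lusztig property on $\Gamma$). First I would recall the standard relation between $\Ext$-groups for $G_1$ and for $G_1T$: since $G_1T/G_1\cong T$ acts semisimply and the relevant weights are restricted, one has a weight-space decomposition
\begin{equation*}
\Ext^n_{G_1}(L(\lambda),L(\mu))\cong\bigoplus_{\nu\in X(T)}\Ext^n_{G_1T}(L(\lambda),L(\mu)\otimes p\nu),
\end{equation*}
where $L(\mu)\otimes p\nu\cong L_k(\mu+p\nu)$ in the notation of \S2.3. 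Thus a nonzero $\Ext^n_{G_1}(L(\lambda),L(\mu))$ forces $\Ext^n_{G_1T}(L(\lambda),L_k(\mu+p\nu))\neq 0$ for some $\nu\in X(T)$, and since $L(\mu+p\nu)\cong L_k(\mu+p\nu)$ is the irreducible $G_1T$-module with highest weight $\mu+p\nu\in\Gamma+pX$, equation \eqref{firstcongruence} applies and gives $n\equiv\ell(\lambda)-\ell(\mu+p\nu)\pmod 2$.

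Second, I would invoke the length congruence \eqref{length} from \S2.2: for $\theta=p\nu$ with $\nu$ in the root lattice (and after reducing to $W_p\cdot\lambda$, so that the relevant $\nu$ indeed lies in $\Z R$, $\ell$ being defined only on $X_{\text{\rm reg}}(T)$ anyway), we have $\ell(\mu+p\nu)\equiv\ell(\mu)\pmod 2$. Combining, every nonzero $\Ext^n_{G_1}(L(\lambda),L(\mu))$ satisfies $n\equiv\ell(\lambda)-\ell(\mu)\pmod 2$. Since the parity $\ell(\lambda)-\ell(\mu)$ does not depend on $n$, the groups $\Ext^n_{G_1}$ and $\Ext^{n\pm 1}_{G_1}$ sit in opposite parity classes, so they cannot both be nonzero. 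That is exactly the assertion of the lemma.

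The main obstacle, and the only point needing genuine care rather than bookkeeping, is justifying that \eqref{firstcongruence} really does apply to all the weights $\mu+p\nu$ that arise: these range over $\Gamma+pX$, not just $\Gamma$, and one must be sure the inductive argument sketched before the lemma (the $G_1T$ analogue of \cite[Thm.~5.7--5.8]{IKL}, with the wall-crossing ``middle'' shown to be completely reducible using \eqref{surj} and the LCF on $\Gamma$) genuinely covers restricted $\lambda,\mu$ and their $p$-translates. Here the \emph{stability} of $\Gamma$ is essential: it guarantees that when a restricted weight of $\Gamma$ is reached by an ascending sequence of wall-crossings, every weight in that accessing sequence also lies in $\Gamma$, so the hypotheses propagate correctly through the induction. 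A secondary, purely notational point is that $\ell$ is only defined on $p$-regular weights, so one should restrict attention to $\nu\in\Z R$ (equivalently, pass to $W_p\cdot\lambda$ as \eqref{KLproperties} permits) before applying \eqref{length}; this is harmless since $L(\lambda)$ and $L(\mu)$ lie in the same block precisely when the nonvanishing of some $\Ext^n$ between them forces it.
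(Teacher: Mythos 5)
Your first step (decomposing $\Ext^n_{G_1}$ over the $p$-twists and applying \eqref{firstcongruence} to the resulting $G_1T$-groups) is exactly how the paper begins, but the second step has a genuine gap. Nonvanishing of $\Ext^n_{G_1T}(L(\lambda),L_k(\mu+p\nu))$ only forces $\mu+p\nu\in W_p\cdot\lambda$, i.e.\ $\lambda-\mu-p\nu\in{\mathbb Z}R$; it does \emph{not} force $\nu\in{\mathbb Z}R$. Indeed, if $\lambda\not\equiv\mu \pmod{{\mathbb Z}R}$ then no relevant $\nu$ can lie in ${\mathbb Z}R$, so your parenthetical ``after reducing to $W_p\cdot\lambda$, the relevant $\nu$ lies in ${\mathbb Z}R$'' is false, and \eqref{length} cannot be invoked to compare $\ell(\mu+p\nu)$ with $\ell(\mu)$. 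Consequently your intermediate claim that every nonzero $\Ext^n_{G_1}(L(\lambda),L(\mu))$ satisfies $n\equiv\ell(\lambda)-\ell(\mu)\pmod 2$ is not just unproven but can fail: already in rank one, $2\htt(\varpi)=(\varpi,\alpha^\vee)=1$ is odd, so translating by $p\varpi$ flips the parity of $\ell$, and e.g.\ for $SL_2$, $p=3$, one has $\Ext^1_{G_1}(k,L(\varpi))\neq 0$ while $\ell(0)-\ell(\varpi)=0$. The lemma is nevertheless true, but only the \emph{relative} parity statement survives, and establishing it is the actual content of the proof.

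The repair (which is the paper's argument) is to compare the two degrees rather than compute an absolute parity: if both $\Ext^n_{G_1}(L(\lambda),L(\mu))$ and $\Ext^{n+1}_{G_1}(L(\lambda),L(\mu))$ were nonzero, pick twists $\theta,\theta'\in X(T)$ with $n\equiv\ell(\lambda)-\ell(\mu+p\theta)$ and $n+1\equiv\ell(\lambda)-\ell(\mu+p\theta')\pmod 2$ via \eqref{firstcongruence}. Linkage gives $\lambda-\mu-p\theta\in{\mathbb Z}R$ and $\lambda-\mu-p\theta'\in{\mathbb Z}R$, hence $p(\theta-\theta')\in{\mathbb Z}R$; since $p>h$ is prime to the index of connection, $\theta-\theta'\in{\mathbb Z}R$, and only now does \eqref{length} apply, giving $\ell(\mu+p\theta)\equiv\ell(\mu+p\theta')\pmod 2$ and the contradiction $n\equiv n+1\pmod 2$. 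Note that the coprimality use of $p>h$ is essential here and is absent from your argument precisely because you assumed the twists were already in the root lattice.
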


\begin{proof} If $\Ext^n_{G_1}(L(\lambda),L(\mu))\not=0$, then for some $\theta\in X(T)$,
$\Ext^n_{G_1T}(L(\lambda),L(\mu+p\theta))\not=0$. Thus, $\ell(\lambda)-\ell(\mu+p\theta)\equiv n$ mod 2, using
(\ref{firstcongruence}).  Also, $\lambda$ and $\mu+p\theta$
are $W_p$-conjugate, so that $\lambda-\mu-p\theta$ lies in the root lattice ${\mathbb Z}R$.
For the same reason, if $\Ext^{n+1}_{G_1}(L(\lambda),L(\mu))\not=0$, then for some $\theta'\in X(T)$,
$\Ext^{n+1}_{G_1T}(L(\lambda),L(\mu+p\theta'))\not=0$. This implies that $\ell(\lambda)-\ell(\mu+p\theta')
\equiv n+1$ mod 2. Again, $\lambda-\mu-p\theta'\in{\mathbb Z}R$. Therefore,
$p(\theta-\theta')\in{\mathbb Z}R$.  Since $p>h$, $p$ is relatively prime to the index of connection of
$R$, so that $\theta-\theta'\in {\mathbb Z}R$, and, therefore, by (\ref{length}) above, $\ell(\mu+p\theta)
\equiv \ell(\mu+p\theta')$ mod 2. Putting things together, we get that $n+1\equiv n$ mod 2, which is
absurd. Thus, $\Ext^{n+1}_{G_1}(L(\lambda),L(\mu))=0$. The same argument shows that
$\Ext^{n-1}_{G_1}(L(\lambda),L(\mu))=0.$
 \end{proof}

Let $A$ be a positively graded algebra. For a graded $A$-module $N$ and an integer $r$, let $N\langle r\rangle$ be the shifted graded $A$-module, obtained by putting $N\langle r\rangle_s:=N_{s-r}$. If $M,N$ are graded $A$-modules, let $\grExt^n_A(M,N)$ be the $n$th Ext-group computed in the category of graded
$A$-modules.

\begin{thm}\label{maintheorem} Assume that $p>h$. Let $\Gamma$ be a stable poset ideal in $X_{\text{\rm reg}}(T)_+$. Assume that if $\gamma\in\Gamma$ is $p$-restricted,
then the LCF holds for $L(\gamma)$. 
 If $\lambda,\mu\in\Gamma$ and $r\in\mathbb Z$, then
$$\grExt^n_{u'}(L(\lambda),L(\mu)\langle r \rangle )\not=0\implies n=r.$$
\end{thm}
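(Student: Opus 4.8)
The plan is to transfer the Koszulity-type statement for the quantum algebra $u'_\zeta$ (Theorem \ref{thm2.1}), which is available unconditionally since $p>h$, down to characteristic $p$ via the $\sO$-form $\wfa = \wu'_\zeta$ and the integral grading of Theorem \ref{thm2.2}, and then to control the change from $\wfa/\pi\wfa = u'$ to the graded algebra $\gr u'$ using the even-odd vanishing input of Lemma \ref{evenodd}. First I would set up the graded algebras: $\wfa$ carries the positive, $X$-compatible grading of Theorem \ref{thm2.2} whose base change to $K$ is the Koszul grading on $u'_\zeta$; applying $-\otimes_\sO k$ gives a positive $X$-graded algebra structure on $u'$, namely $\gr u' = \bigoplus_n \wrad^n\wfa\,/\,\wrad^{n+1}\wfa \otimes_\sO k$ in the notation of (\ref{wgr}). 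The irreducible $u'$-modules $L(\mu)$ with $\mu$ restricted $p$-regular are, by the stability hypothesis, exactly those indexed by the restricted weights in $\Gamma$ together with (at most) finitely many others; each such $L(\mu)$ has a graded version concentrated in grade $0$, and the statement to be proved is that $\grExt^n_{\gr u'}(L(\lambda),L(\mu)\langle r\rangle)$ vanishes unless $n=r$.

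The key steps, in order: (1) Because $p>h$, Theorem \ref{thm2.1} gives that each regular block $\sB$ of $u_\zeta$ is Koszul, hence $u'_\zeta$ is Koszul; in particular $\Ext^n_{u'_\zeta}(L_K(\lambda),L_K(\mu))$ lives in a single internal degree $n$, i.e.\ the graded Ext over the Koszul-graded $u'_\zeta$ satisfies the conclusion. (2) Use the $\sO$-form: by Theorem \ref{thm2.2} the grading on $\wfa$ base changes to the Koszul grading on $u'_\zeta$, so for the minimal (or maximal) admissible lattices $\wL(\lambda)$ inside $L_K(\lambda)$ one gets graded $\wfa$-modules concentrated in grade $0$, and a universal-coefficients / base-change argument compares $\grExt_{\wfa}$ with $\grExt_{u'_\zeta}$ after $\otimes K$ and with $\grExt_{\gr u'}$ after $\otimes k$. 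The point is that $\grExt^n_{\gr u'}(L(\lambda),L(\mu)\langle r\rangle) \neq 0$ forces, via the $\pi$-adic filtration, a nonzero class in $\grExt^n_{u'_\zeta}(L_K(\lambda),L_K(\mu)\langle r\rangle)$ \emph{unless} there is $\pi$-torsion, i.e.\ unless the relevant Ext in characteristic $p$ is strictly larger than what survives to $K$. (3) The residual discrepancy is exactly measured by $\Ext^\bullet_{u'}(L(\lambda),L(\mu))$ versus $\Ext^\bullet_{u'_\zeta}(L_K(\lambda),L_K(\mu))$, and here is where the LCF for restricted $\gamma\in\Gamma$ and Lemma \ref{evenodd} enter: the LCF guarantees $\dim L(\mu) = \dim L_\zeta(\mu)$ for the relevant $\mu$, so the $\rDelta$'s coincide with the $\Delta^p$'s and reduction mod $p$ behaves well; and Lemma \ref{evenodd} gives the parity statement $\Ext^n_{G_1}(L(\lambda),L(\mu))\neq 0 \implies \Ext^{n\pm1}_{G_1}(L(\lambda),L(\mu))=0$, i.e.\ the characteristic-$p$ Ext-algebra has the same even-odd vanishing as a Koszul algebra. (4) Combine: the $\pi$-adic filtration gives $\grExt^n_{\gr u'}(L(\lambda),L(\mu)\langle r\rangle)\neq 0 \implies r\leq n$ always (the grade of an $n$-extension is at most $n$ because the grading is positive), and the reverse inequality $r\geq n$ comes from the comparison with the Koszul situation over $u'_\zeta$ combined with the even-odd vanishing of Lemma \ref{evenodd}, which prevents the "missing" internal degrees $r<n$ from occurring in characteristic $p$ beyond those already accounted for.

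\textbf{Main obstacle.} I expect the hard part to be Step (3)--(4): controlling the failure of flatness in the reduction mod $p$, i.e.\ ruling out $\pi$-torsion in $\Ext^\bullet_{\wfa}(\wL(\lambda),\wL(\mu))$ that would allow $\grExt^n_{\gr u'}$ to be nonzero in an internal degree $r<n$ not visible over $K$. The LCF hypothesis is used to ensure the relevant standard/costandard reductions $\rDelta(\gamma) = \Delta^p(\gamma)$ and $\rnabla(\gamma)=\nabla_p(\gamma)$ behave compatibly with the grading (so that one is genuinely comparing graded Ext of graded-$0$ modules on both sides), but the decisive extra leverage is Lemma \ref{evenodd}: since a Koszul algebra automatically has $\Ext^n$ concentrated in internal degree $n$ and in particular has even-odd vanishing, and since Lemma \ref{evenodd} establishes the same even-odd vanishing for $u'$ in characteristic $p$ directly (from the Kazhdan--Lusztig property, which follows from the restricted LCF on $\Gamma$ by the discussion preceding the lemma), the parity rigidity forces any nonzero $\grExt^n$ into a single internal degree; identifying that degree as $n$ then follows by matching with the quantum Koszul grading in the lowest-length case and propagating by the same wall-crossing/induction on length already used to prove Lemma \ref{evenodd}. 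One should also check the harmless point that the finitely many $p$-regular restricted weights possibly lying outside $\Gamma$ cause no trouble, e.g.\ by enlarging $\Gamma$ to its saturation (using stability) or by noting the assertion is purely about $u'$ and its fixed grading.
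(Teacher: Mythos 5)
Your overall route is indeed the paper's: quantum Koszulity of $u'_\zeta$ (Theorem \ref{thm2.1}), descent through the integral grading of Theorem \ref{thm2.2}, the LCF on restricted weights of $\Gamma$ to guarantee $L(\gamma)\cong k\otimes_\sO \widetilde L_\zeta(\gamma)$, and Lemma \ref{evenodd} as the even--odd input. The gap is in your steps (3)--(4), where the argument is actually supposed to close. The inequality you invoke for free --- that positivity of the grading gives $\grExt^n_{u'}(L(\lambda),L(\mu)\langle r\rangle)\neq 0\Rightarrow r\le n$ --- is unjustified and does not hold here: the grade-$0$ component of $u'$ in the Theorem \ref{thm2.2} grading is not semisimple in general (this is exactly why $u'$ is only Q-Koszul rather than Koszul), so the radical of $u'$ contains grade-$0$ elements and the $n$-th term of a minimal graded resolution of $L(\lambda)$ need not be generated in grades $\ge n$; moreover, even when the grade-$0$ part is semisimple, the inequality one gets from positivity is $r\ge n$, not $r\le n$. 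Likewise, your proposed way of pinning down the internal degree ("match with the quantum Koszul grading in the lowest-length case and propagate by wall-crossing/induction on length") is not a workable substitute: in the paper the length/wall-crossing induction serves only to establish the Kazhdan--Lusztig parity property behind Lemma \ref{evenodd} (and that induction is itself intertwined with (\ref{surj})); it is not used, and would not suffice, to identify internal degrees of graded Ext groups.

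What actually closes the argument --- and what you correctly flagged as the main obstacle ($\pi$-torsion) but did not resolve --- is a universal-coefficient step driven by the parity lemma. For restricted $\lambda,\mu\in\Gamma$ (and by stability plus $L(\lambda)|_{u'}\cong L(\lambda_0)^{\oplus\dim L(\lambda_1)}$ this is the only case needed, so no weights outside $\Gamma$ ever arise), apply $\Ext^\bullet_{\widetilde u'_\zeta}(\wL_\zeta(\lambda),-)$ to $0\to\wL_\zeta(\mu)\xrightarrow{\ \pi\ }\wL_\zeta(\mu)\to L(\mu)\to 0$. If $\Ext^n_{u'}(L(\lambda),L(\mu))\neq 0$, then Lemma \ref{evenodd} gives $\Ext^{n\pm1}_{u'}(L(\lambda),L(\mu))=0$; the long exact sequence together with Nakayama then forces $\Ext^{n+1}_{\widetilde u'_\zeta}(\wL_\zeta(\lambda),\wL_\zeta(\mu))=0$, yields the surjectivity statement (\ref{surj}), and the vanishing in degree $n-1$ makes $\Ext^{n}_{\widetilde u'_\zeta}(\wL_\zeta(\lambda),\wL_\zeta(\mu))$ $\sO$-free, so it base changes both to $\Ext^n_{u'}(L(\lambda),L(\mu))$ over $k$ and to $\Ext^n_{u'_\zeta}(L_K(\lambda),L_K(\mu))$ over $K$. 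Since the grading on $u'$ is the base change of the grading on $\widetilde u'_\zeta$, these isomorphisms respect the graded decompositions, so a nonzero $\grExt^n_{u'}(L(\lambda),L(\mu)\langle r\rangle)$ lifts to a nonzero graded Ext over $\sO$ and hence over $K$, where Koszulity of $u'_\zeta$ gives $n=r$. Without this freeness/surjectivity mechanism your comparison in step (2) remains an "unless there is $\pi$-torsion" statement, and the parity remark alone does not convert it into the equality $n=r$.
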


\begin{proof} The hypothesis implies that $\rDelta(\gamma)\cong \rnabla(\gamma)=L(\gamma)$, for all $p$-restricted dominant
weights $\gamma\in\Gamma$ (or, more generally, for all $\gamma\in\Gamma$). Thus, if $\gamma\in\Gamma$
is restricted, $L(\gamma)\cong k\otimes_\sO \widetilde L_\zeta(\gamma)$ as discussed right before
the statement of the theorem.

Let $\lambda,\mu\in\Gamma$ be $p$-restricted. Form the short exact sequence 
$$0\to \wL_\zeta(\mu)
\overset\pi\longrightarrow \widetilde L_\zeta(\mu)\longrightarrow L(\mu)\to 0$$
of $\widetilde u'$-modules.
Write $L=L(\lambda), L'=L(\mu)$, etc. and form the long exact sequence of $\Ext$-groups
$$\cdots\to \Ext^{n-1}_{\widetilde u_\zeta}(\widetilde L,L')\to \Ext^n_{\widetilde u_\zeta}(\widetilde L,\widetilde L')
\overset\pi\to\Ext^n_{\widetilde u'}(\widetilde L,\widetilde L')\to\Ext^n_{\widetilde u'}(\widetilde L,L')\to \Ext^{n+1}_{\widetilde u'}(\widetilde L,\widetilde L')\to \cdots
$$
Observe that $\Ext^\bullet_{\widetilde u'}(\wL,L')\cong\Ext^\bullet_{u'}(L,L')$. 

Now assume that $\grExt^n_{u'}(L,L')\not=0$. Then, by Lemma \ref{evenodd}, $\Ext^{n+1}_{u'}(L,L')=0$ and
Nakayama's lemma, and the long exact sequence above force $\Ext^{n+1}_{\widetilde u}(\wL,\wL')=0$. 
Thus, 
\begin{equation}\label{surj}\Ext^n_{\widetilde u'}(\wL,\wL')/\pi\Ext^n_{\widetilde u'}(\wL,\wL')\cong\Ext^n_{u'}(L,L'),\quad\forall n\geq 0.\end{equation}
In addition, $\Ext^{n-1}_{u'}(\wL,L')\cong\Ext^{n-1}_{u'}(L,L')=0$, so that $\Ext^n_{\widetilde u'}(\wL,\wL')$
is free of rank equal to the dimension of $\Ext^n_{u'}(L,L')$ or of $\Ext^n_{\widetilde u'_K}(L_K,L'_K)$. In particular,
$\Ext^n_{\widetilde u'_K}(L_K,L'_K)\cong\Ext^n_{\widetilde u'}(\wL,\wL)_K$.

On the other hand, $u'$ inherits the structure of a positively graded algebra from the grading on
$\widetilde u'$. We have 
$$\begin{cases}\Ext^n_{\widetilde u'}(\wL,\wL')\cong\bigoplus_{r}\grExt^n_{\widetilde u'}(\wL,\wL'\langle r\rangle);\\
\Ext^n_{u'}(L,L')\cong\bigoplus_r\grExt^n_{u'}(L,L'\langle r\rangle).\end{cases}$$
It follows that the isomorphism (\ref{surj}) induces an isomorphism 
$$\grExt^n_{\widetilde u'}(\wL,\wL'\langle r\rangle )/\pi\, \grExt^n_{\widetilde u'}(\wL,\wL'\langle r\rangle )  \cong\grExt^n_{u'}(L,L'\langle r\rangle).$$ Hence, 
$\grExt^n_{\widetilde u'}(\wL,\wL'\langle r\rangle )\not=0$, as is $\grExt^n_{\widetilde u'_K}(\wL_K,\wL'_K\langle r\rangle )\not=0$. Therefore,
because $\widetilde u'_K$ is a Koszul algebra, $n=r$, completing the proof.
\end{proof}

\begin{rem}\label{remark5.3} Consider the modules $Z_K(\lambda)$ and $Z'_K(\lambda)$ in the quantum category ${\mathcal C}_K$
defined in \cite[\S2.11]{AJS}
$\lambda\in X$. By \cite[\S\S8.8--8.12]{AJS}, these modules (for $p$-regular weights $\lambda$) are
$\mathbb Z$-graded modules, denoted $\widetilde Z_K(\lambda)$ and $\widetilde Z'_K(\lambda)$, for the Koszul algebra $u'_\zeta$. In fact, by \cite[Prop. 18.19(b)]{AJS},
these graded modules are linear.  It can be shown that the modules $\widetilde Z_K(\lambda)$ and $\widetilde Z'_K(\lambda)$ admit graded $\sO$-forms $\widetilde Z_\sO(\lambda)$ and $\widetilde Z'_\sO(\lambda)$. Hence, base changing to
the field $k$, we see that the classical modules $Z_k(\lambda)$ and $Z_k'(\lambda)$ have induced
gradings. Then, it can be shown that, if $\lambda\in X(T)_+$ and $\mu\in\Gamma$, then 
$\grExt^n_{u'}(Z_k(\lambda), L(\mu)\langle r\rangle)\not=0$ implies $n=r$. A similar result holds for the
$Z'_k(\lambda)$.  \end{rem}

\section{Some relative results}We begin with the following general result. It does not require any
assumption of the LCF on $\Gamma$. We will work with the quasi-hereditary algebra $\wgr A_\Gamma$,
which has weight poset $\Gamma$, standard modules $\Delta_{\wgr A_\Gamma}(\gamma)$, and
costandard modules $\nabla_{\wgr A_\Gamma}(\gamma)$, $\gamma\in\Gamma$. Also,
$\Delta_{\wgr A_\Gamma}(\gamma)=\wgr\Delta(\gamma)$ and there is a dual construction (in the same
spirit) for
$\nabla_{\wgr A_\Gamma}(\gamma)$; see \cite[(4.0.2)]{PS11}, where $\nabla_{\wgr A_\Gamma}(\gamma)
=\wgr^\diamond \nabla(\gamma)$.

\begin{thm}\label{thirdmainconsequencethm6.5} \cite[Thm. 6.5]{PS13} Assume that $p\geq 2h-2$ is
an odd prime, and let $\Gamma$ be a finite poset ideal of $p$-regular weights.

 (a) For $\lambda,\mu\in\Gamma$ and any integer $n\geq 0$, there are natural vector space isomorphisms
\begin{equation}\label{equiv2}\begin{aligned}
\Ext^n_{\wgr A_\Gamma}(\Delta_{\wgr A_\Gamma}(\lambda),\rnabla(\mu))&\cong\Ext^n_{A_\Gamma}(\Delta(\lambda),\rnabla(\mu))\\
&\cong\Ext^n_G(\Delta(\lambda),\rnabla(\mu))\end{aligned}\end{equation}
and
\begin{equation}\label{equiv3}\begin{aligned}
\Ext^n_{\wgr A_\Gamma}(\rDelta(\lambda),\nabla_{\wgr A_\Gamma}(\mu))&\cong\Ext^n_{A_\Gamma}(\rDelta(\lambda),\nabla(\mu))\\
&\cong\Ext^n_G(\rDelta(\lambda),\nabla(\mu)).\end{aligned}\end{equation}

(b) 
 For any integer $n\geq 0$, there are natural vector space isomorphisms
\begin{equation}\label{equiv1}\begin{aligned}
\Ext^n_{\wgr A_\Gamma}(\rDelta(\lambda),\rnabla(\mu)& )\cong\Ext^n_{A_\Gamma}(\rDelta(\lambda),\rnabla(\mu))\\
&\cong\Ext^n_G(\rDelta(\lambda),\rnabla(\mu)).\end{aligned}\end{equation} 
for $\lambda,\mu\in\Gamma$.
\end{thm}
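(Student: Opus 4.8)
The plan is to prove Theorem \ref{thirdmainconsequencethm6.5} by reducing each of the three claimed isomorphisms to standard facts about quasi-hereditary algebras, highest weight categories, and the forced grading. Since the theorem is stated as \cite[Thm. 6.5]{PS13}, the proof is essentially a recollection, but here is how I would organize it. The key structural input is that $A_\Gamma$-mod $\cong \sC[\Gamma]$ is a highest weight category with standard modules $\Delta(\gamma)$ and that $\rDelta(\gamma)$, $\rnabla(\gamma)$ are the standard and costandard modules of the grade-$0$ subalgebra $B_0 = (\wgr A_\Gamma)_0$. I would also use Theorem \ref{Weyl}: each $\Delta(\gamma)$ has a $\Delta^p$-filtration, hence (under the LCF, which is in force here via the standing hypotheses, $p\geq 2h-2$ odd) a $\rDelta$-filtration compatible with the $G_1$-radical series, which is exactly the filtration giving the forced grading $\wgr \Delta(\gamma)$.

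\textbf{Step 1: the comparison between $A_\Gamma$ and $G$.} The lower isomorphisms in \eqref{equiv2}, \eqref{equiv3}, \eqref{equiv1} (the ones of the form $\Ext^n_{A_\Gamma}(-,-)\cong\Ext^n_G(-,-)$) follow from the fact that $\Gamma$ is a poset ideal: $\sC[\Gamma]$ is a Serre subcategory of $\sC = G$-mod which is closed under extensions, and all the modules appearing ($\Delta(\lambda)$, $\rDelta(\lambda)$, $\rnabla(\mu)$, $\nabla(\mu)$) lie in $\sC[\Gamma]$ since their composition factors are $L(\gamma)$ with $\gamma\in\Gamma$ (using that $\Gamma$ is an ideal, or stability, to control the constituents of $\rDelta$ and $\rnabla$). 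Extension groups in a Serre subcategory closed under extensions agree with those in the ambient category, so this half is immediate.

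\textbf{Step 2: the comparison between $\wgr A_\Gamma$ and $A_\Gamma$.} This is the substantive part. The idea is that passing to the associated graded for the radical-type filtration defining $\wgr$ does not change Ext between a standard module and a costandard module, because the relevant spectral sequence (or filtration-by-filtration argument) degenerates. For \eqref{equiv2}: since $\Delta(\lambda)$ has a $\rDelta$-filtration compatible with the $G_1$-radical filtration, $\wgr\Delta(\lambda) = \Delta_{\wgr A_\Gamma}(\lambda)$ is filtered by shifts of $\rDelta(\gamma)$'s placed in pure grades, and $\rnabla(\mu)$ sits in pure grade $0$. One then computes $\Ext^n_{\wgr A_\Gamma}(\Delta_{\wgr A_\Gamma}(\lambda),\rnabla(\mu))$ via the filtration, reducing to $\grExt^n_{\wgr A_\Gamma}(\rDelta(\gamma),\rnabla(\mu)\langle r\rangle)$, which by Theorem \ref{firstmainconsequencethm5.6} is nonzero only when $r = n$; summing over $r$ recovers exactly $\Ext^n_{A_\Gamma}(\Delta(\lambda),\rnabla(\mu))$ through the standard ungrading identity \eqref{ext} and a dimension/rank count matching the ungraded filtration of $\Delta(\lambda)$ by $\rDelta(\gamma)$'s. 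The dual filtration $\nabla_{\wgr A_\Gamma}(\mu) = \wgr^\diamond\nabla(\mu)$ gives \eqref{equiv3} by the symmetric argument, using the second half of Theorem \ref{secondmainconsequencethm3.7}. For \eqref{equiv1}, both arguments $\rDelta(\lambda)$ and $\rnabla(\mu)$ are in pure grade $0$, and one uses Theorem \ref{firstmainconsequencethm5.6} directly together with the fact (from the general theory of forced gradings on quasi-hereditary algebras, \cite{PS10}) that $\Ext$ between grade-$0$ modules over $\wgr A_\Gamma$ and over $A_\Gamma$ agree when the grading is "tight" in the sense measured by the Q-Koszul estimate.

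\textbf{Main obstacle.} The delicate point is Step 2: verifying that no "grade-collapsing" contributions to Ext are lost or gained when passing between $A_\Gamma$ and $\wgr A_\Gamma$. A priori, $\wgr$ only gives inequalities on Ext (a spectral sequence of the filtration), and one needs the Q-Koszul vanishing results (Theorems \ref{firstmainconsequencethm5.6} and \ref{secondmainconsequencethm3.7}) to force degeneration. Concretely, one must check that the filtration of $\Delta(\lambda)$ by $\Delta^p(\gamma)=\rDelta(\gamma)$'s is strict for the radical filtration — i.e. that $\wgr\Delta(\lambda)$ really is $\rDelta$-filtered with the sections in the expected single grades — which is precisely the compatibility clause in Theorem \ref{Weyl}, and then that the "diagonal" nonvanishing $r=n$ from Theorem \ref{firstmainconsequencethm5.6} is enough to see the $\grExt$ long exact sequences of the filtration break into short exact pieces. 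I would handle this by an induction on the length of the $\rDelta$-filtration of $\wgr\Delta(\lambda)$, at each stage invoking Theorem \ref{firstmainconsequencethm5.6} to kill the connecting maps, exactly as in the proof of Proposition \ref{prop4.1}. Everything else is bookkeeping with \eqref{ext} and the identification of standard/costandard modules over $\wgr A_\Gamma$ and $B_0$.
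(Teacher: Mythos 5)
There is a genuine gap, and it is about hypotheses. The paper states Theorem \ref{thirdmainconsequencethm6.5} (quoted from \cite[Thm. 6.5]{PS13}) explicitly \emph{without} any LCF assumption --- the sentence introducing it says ``It does not require any assumption of the LCF on $\Gamma$,'' and only $p\geq 2h-2$ odd and $\Gamma$ a finite ideal of $p$-regular weights are assumed. Your argument, however, leans throughout on results that live under the standing LCF hypothesis of \S3: Theorem \ref{Weyl} (the $\Delta^p$-filtration of $\Delta(\lambda)$ compatible with the $G_1$-radical series, i.e. the $p$-filtration theorem of \cite{PS11}, which needs the LCF), and Theorems \ref{firstmainconsequencethm5.6} and \ref{secondmainconsequencethm3.7} (the Q-Koszul diagonal vanishing, also proved in \cite{PS13} under the LCF). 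Beyond being unavailable under the stated hypotheses, this is circular relative to the purpose of \S6: Theorem \ref{thirdmainconsequencethm6.5} is precisely the LCF-free input used in Theorem \ref{theorem6.2} to \emph{relativize} Theorems \ref{firstmainconsequencethm5.6} and \ref{secondmainconsequencethm3.7}, so a proof of it that invokes their full-LCF versions defeats the program. In \cite{PS13} the unconditional nature of these isomorphisms comes from the quantum/integral side (the graded integral quasi-hereditary algebra $\wgr\wA_\Gamma$ of \cite{PS10} and base change, where the relevant character-formula input is Tanisaki's theorem at a $p$th root of unity, valid for all $p>h$), not from the modular LCF.

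Separately, even granting the LCF, your Step 2 does not actually produce the claimed isomorphism. The induction you sketch (filtration of $\wgr\Delta(\lambda)$ by shifted $\rDelta$'s, killing connecting maps via the diagonal vanishing) only computes $\grExt^\bullet_{\wgr A_\Gamma}$; it never builds the comparison map to $\Ext^\bullet_{A_\Gamma}$, which is an Ext group over a \emph{different} algebra. Passing between a filtered algebra and its associated graded algebra is exactly the hard point: in general one only has a spectral sequence, and ``a dimension/rank count matching the ungraded filtration'' is not an argument --- the dimensions of $\Ext_{A_\Gamma}(\Delta(\lambda),\rnabla(\mu))$ are not known a priori and cannot be read off from the graded side without the very comparison you are trying to prove. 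This bridge is the substantive content of \cite[Thm. 6.5]{PS13}, and it is supplied there by the integral deformation machinery rather than by the Q-Koszul vanishing statements.
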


Now let $\Gamma$ be a finite stable poset ideal of $p$-regular weights. Let $a(\Gamma)$ be the
number of $p$-alcoves $C$ which intersect $\Gamma$ non-trivially.  By the argument for \cite[Prop. 10.3]{PS9}, $A_\Gamma$
has global dimension $\leq 2a(\Gamma)$.  

\begin{thm}\label{theorem6.2} Assume that the LCF holds on $\Gamma$. Assume that $p>6a(\Gamma)+3h-4$. For $\gamma,\nu\in\Gamma$, $n\in\mathbb N$,
$m\in\mathbb Z$, we have
$$\begin{cases}(1) \,\,\grExt^n_{\wgr A_\Gamma}(\rDelta(\gamma),\rnabla(\nu)\langle m\rangle )\not=0\implies n=m;\\
(2)\,\,\grExt^n_{\wgr A_\Gamma}(\wgr\Delta(\gamma),\rnabla(\nu)\langle m\rangle)\not=0\implies n=m.
\\
(3) \,\,\grExt^n_{\wgr A_\Gamma}(\rDelta(\gamma),\nabla_{\wgr A_\Gamma}(\nu)\langle m\rangle)\not=0\implies n=m.
\end{cases}$$
\end{thm}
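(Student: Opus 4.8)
The plan is to deduce Theorem \ref{theorem6.2} from the ``absolute'' Koszulity results of \S3--\S4 (Theorems \ref{firstmainconsequencethm5.6} and \ref{secondmainconsequencethm3.7}, which give exactly statements (1)--(3) but over a larger poset $\Gamma'$ where the LCF holds on all restricted weights) together with a localization argument that cuts the big poset down to $\Gamma$. First I would enlarge $\Gamma$ to a stable poset ideal $\Gamma'$ of $p$-regular dominant weights which contains all $p$-regular restricted weights, chosen as small as the alcove-counting estimate allows: because the LCF is assumed to hold on $\Gamma$, and because $\Gamma$ is stable, the new restricted weights brought in by $\Gamma'$ will lie in the Jantzen region provided $p$ is large enough relative to $a(\Gamma)$ (this is where the hypothesis $p>6a(\Gamma)+3h-4$ is consumed: one needs the global-dimension bound $\mathrm{gd}\,A_\Gamma\le 2a(\Gamma)$ to control how far outside $\Gamma$ one must reach, then a Jantzen-region bound of the shape $(\lambda+\rho,\alpha_0^\vee)\le p(p-h+2)$ to force the LCF on those extra weights). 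On $\Gamma'$ the LCF holds for all restricted weights, so Theorems \ref{firstmainconsequencethm5.6} and \ref{secondmainconsequencethm3.7} apply verbatim to $B':=\wgr A_{\Gamma'}$, giving the three vanishing statements there.

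Next I would pass from $B'$ back to $B:=\wgr A_\Gamma$ by the idempotent-truncation functor $j^*=e(-)$ discussed after Proposition \ref{stlin}: since $\Gamma$ is a poset ideal in $\Gamma'$, there is a grade-0 idempotent $e\in B'$ with $eB'e\cong \wgr A_\Gamma=B$ (here one uses that the forced grading is compatible with idempotent truncation — $\wgr(eA_{\Gamma'}e)\cong e(\wgr A_{\Gamma'})e$, which is implicit in the constructions of \cite{PS10,PS13}). The functor $j^*$ is exact, sends $\rDelta(\gamma)\mapsto\rDelta(\gamma)$, $\rnabla(\nu)\mapsto\rnabla(\nu)$, $\wgr\Delta(\gamma)\mapsto\wgr\Delta(\gamma)$, and (via the adjoint $j_*$) $\nabla_{\wgr A_\Gamma}(\nu)\mapsto$ the restriction of $\nabla_{\wgr A_{\Gamma'}}(\nu)$, all for $\gamma,\nu\in\Gamma$. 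For $\nabla_0$-coefficients the adjunction argument from the paragraph before Proposition \ref{stlin} gives honest isomorphisms $\grExt^n_{B}(j^*X,j^*\nabla)\cong\grExt^n_{B'}(X,\nabla)$ when $\nabla$ is costandard (one localizes further at a coideal making the relevant costandard irreducible, exactly as in the displayed isomorphism there); hence the vanishing on $B'$ descends to the vanishing on $B$ in cases (1), (2) — and dually in (3), using the right-exact adjoint $j_!$ on the standard side. So each of (1)--(3) for $B=\wgr A_\Gamma$ follows from the corresponding statement for $B'=\wgr A_{\Gamma'}$.

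The main obstacle I expect is not the homological algebra — that is a clean localization plus adjunction argument — but the bookkeeping that produces the explicit numerical bound $p>6a(\Gamma)+3h-4$. One has to verify two things simultaneously: that $\Gamma$ can be enlarged to a stable $\Gamma'$ on which the LCF genuinely holds for every restricted weight (not merely every weight of $\Gamma$), and that $\Gamma'$ can be chosen so the resulting algebra still has manageable global dimension so that the Ext-computations in \S3 remain valid. The coefficient $6$ strongly suggests that $\Gamma'$ is obtained in a bounded number of ``alcove-layer'' steps, each step adding at most $O(a(\Gamma))$ alcoves while the Jantzen-region inequality, combined with $p>h$ and the global-dimension estimate $\le 2a(\Gamma)$, is invoked a few times; carefully tracking these constants, and in particular confirming that every newly added restricted weight $\lambda$ satisfies $(\lambda+\rho,\alpha_0^\vee)\le p(p-h+2)$ under the stated hypothesis, is the delicate point. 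Once that combinatorial input is in hand, the rest of the proof is the formal transfer of Theorems \ref{firstmainconsequencethm5.6} and \ref{secondmainconsequencethm3.7} along $j^*$, and cases (1), (2), (3) come out in one stroke.
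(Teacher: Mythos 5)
Your proposal does not work, and the failure is at its very first step. You enlarge $\Gamma$ to a stable ideal $\Gamma'$ containing all $p$-regular restricted weights and assert that the hypothesis $p>6a(\Gamma)+3h-4$, together with the LCF on $\Gamma$, forces the LCF on the newly added restricted weights because they lie in the Jantzen region. Membership in the Jantzen region does not yield the LCF: the LCF on the Jantzen region is exactly Lusztig's conjecture, which is not available for primes in the modest range $p>6a(\Gamma)+3h-4$ (it is only known for enormous, inexplicit $p$, and is even false for some moderately large $p$), and the assumption ``LCF holds on $\Gamma$'' gives no information whatsoever about restricted weights outside $\Gamma$. Consequently you cannot invoke Theorems \ref{firstmainconsequencethm5.6} and \ref{secondmainconsequencethm3.7} for $B'=\wgr A_{\Gamma'}$, since their standing hypothesis in \S3 is precisely that the LCF holds for \emph{all} restricted weights --- the ``external to $\Gamma$'' hypothesis that Theorem \ref{theorem6.2} is designed to eliminate. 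Your argument thus silently reintroduces the global hypothesis and proves only (a truncation of) the absolute statement, not the relative one. A secondary point: even the descent step is miswired, since for a poset \emph{ideal} $\Gamma\subset\Gamma'$ the subcategory is realized by the quotient $B'/B'eB'$ (a Serre subcategory with full Ext-groups), not by the idempotent truncation $eB'e$, which corresponds to a \emph{coideal}; and the compatibility of $\wgr$ with this truncation would itself need proof.

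The role of the bound $p>6a(\Gamma)+3h-4$ in the paper is entirely different from the role you assign it, and the actual proof runs through machinery your proposal never touches. One first uses Theorem \ref{thirdmainconsequencethm6.5} (no LCF needed) to identify $\Ext^n_{\wgr A_\Gamma}(\rDelta(\gamma),\rnabla(\nu))$ with $\Ext^n_G(\rDelta(\gamma),\rnabla(\nu))$, and the global dimension bound $\le 2a(\Gamma)$ to reduce to $n\le 2a(\Gamma)$. Then the $(3t+2h-3)$-smallness bound of \cite[Cor.~3.6]{PSS} for $\Ext^t_{G_1}(L(\gamma_0),L(\nu_0))^{[-1]}$, with $t\le 2a(\Gamma)$ and $p>6a(\Gamma)+3h-4$, puts the dominant weights of these modules in the bottom alcove, so they have $\nabla$-filtrations; the Hochschild--Serre spectral sequence collapses and the $G$-level Ext group injects into $\Ext^n_{u'}(\rDelta(\gamma),\rnabla(\nu))$, compatibly with gradings. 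The conclusion $n=m$ then comes from Theorem \ref{maintheorem}, the graded even/odd statement over $u'$, which is proved under only the relative hypothesis (LCF for restricted weights \emph{in} $\Gamma$) by lifting to the $\sO$-form $\widetilde u'_\zeta$ and using Koszulity of $u'_\zeta$. Theorem \ref{maintheorem} is the engine of the relativization; any correct proof of Theorem \ref{theorem6.2} under the stated hypotheses has to route the nonvanishing through the $u'$-level graded statement (or something equivalent), not through the absolute \S3 results over an enlarged poset.
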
  

\begin{proof} We first prove (1). Assume that $6a(\Gamma)+3h-4$. Write $A=A_\Gamma$. By Theorem \ref{thirdmainconsequencethm6.5}, 
 \begin{equation}\label{global}\Ext^n_{\wgr A}(\rDelta(\gamma),\rnabla(\nu))\cong\Ext^n_A(\rDelta(\gamma),\rnabla(\nu)).\end{equation}
 As noted above, $A$ has global dimension at most $2a(\Gamma)$. Thus, the terms in (\ref{global}) vanish
 if $n> 2a(\Gamma)$. 
 
 On the other hand,   there is a Hochschild-Serre spectral sequence
 \begin{equation}\label{spectralsequence}
 E_1^{s,t}=H^s(G, \Ext^t_{G_1}(\rDelta(\gamma),\rnabla(\nu)^{[-1]})\Longrightarrow
 \Ext^{s+t}_G(\rDelta(\gamma),\rnabla(\nu)).\end{equation}
 We call a weight $\lambda\in X(T)$, $b$-small provided that $|(\lambda,\alpha^\vee)|\leq b$ for all
 positive roots $\alpha$. If $M$ is a finite dimensional rational $G$-module, then it is called $b$-small
 provided that its weights are all $b$-small. Write $\gamma=\gamma_0+p\gamma_1$ and $\nu=\nu_0+p\nu_1$, with $\gamma_0,\nu_0\in X_1(T)$ and $\gamma_1,\nu_1\in X(T)_+$. Then 
$$ \Ext^t_{G_1}(\rDelta(\gamma),\rnabla(\nu))^{[-1]}\cong\Ext^t_{G_1}(L(\gamma_0),L(\nu_0))^{[-1]}\otimes\nabla(\gamma_1^{\star})\otimes\nabla(\nu_1).$$
Here $\gamma_1^\star=-w_0\gamma_1$ is the image of $\gamma_1$ under the opposition involution.
Now \cite[Cor. 3.6]{PSS} implies that, if $t$ is any non-negative integer,  the modules $\Ext^t_{G_1}(L(\gamma_0),L(\nu_0))^{[-1]}$ are
$(3t+2h-3)$-small.  Therefore, if $t\leq 2a(\Gamma)$ and $p>6a(\Gamma)+3h-4$, the dominant weights $\xi$ in $\Ext^t_{G_1}(L(\gamma_0), L(\nu_0))^{[-1]}$ lie in the bottom $p$-alcove $C^+$. Hence, in this case, $\Ext^t_{G_1}(L(\gamma_0),L(\nu_0))^{[-1]}$ has a $\nabla$-filtration. Thus, $\Ext^t_{G_1}(\rDelta(\lambda),\rnabla(\nu))^{[-1]}$ has a $\nabla$-filtration. Therefore, as long as $n:=s+t\leq 2a(\Gamma)$, the hypotheses guarantee
that the spectral sequence (\ref{spectralsequence}) collapses giving that
 \begin{equation}\label{collapse}\Ext^n_G(\rDelta(\gamma),\rnabla(\nu))\cong\Ext^n_{G_1}(\rDelta(\gamma),\rnabla(\nu))^G.\end{equation}
 This is because, if $M$ is a rational $G$-module with a $\nabla$-filtration, then $H^m(G,M)=0$ for all
 $m>0$.
 On the other hand, if $n\geq 2a(\Gamma)$, then, as pointed out in the previous paragraph, we
 have $\Ext^n_G(\rDelta(\gamma),\rnabla(\nu))=0$. 
 
 Thus, if $\Ext^n_{\wgr A}(\rDelta(\gamma),\rnabla(\nu))\not=0$, we obtain that 
 $$\Ext^n_{\wgr A}(\rDelta(\gamma),\rnabla(\nu))\cong\Ext^n_A(\rDelta(\gamma),\rnabla(\nu))$$ injects into $\Ext^n_u(\rDelta(\gamma),
 \rnabla(\nu))$. Returning to the level of graded modules, it follows easily that 
 $$\grExt^n_{\wgr A}(\rDelta(\gamma),\rnabla(\nu)\langle r\rangle)\subseteq\grExt^n_{u'}(\rDelta(\gamma),
 \nabla(\nu)\langle r\rangle), \quad\forall n.$$ 
 Then (1) follows from Theorem \ref{maintheorem}.

Finally, we sketch the proof of (2), leaving the dual proof of case (3) to the reader. Because of the condition imposed on $p$, Theorem \ref{thirdmainconsequencethm6.5} implies that we can assume that $n\leq 2a(\Gamma)$ and
then $\Ext^n_{G_1}(\rDelta(\nu),\rnabla(\mu))^{[-1]}$ has a $\nabla$-filtration.  In addition, the rational
$G$-module $\Ext^n_{G_1}(\Delta(\nu),\rnabla(\mu))^{[-1]}$ has a 
$\nabla$-filtration. With this condition, the reader may check that the proof of Theorem \ref{filtration6.2} (namely, \cite[Thm. \ref{cor4.2}]{PS13}) remains valid. Hence, the natural map (\ref{Jantzentheorem}) is surjective. Therefore, passing to $\grExt_{u'}^n$, we obtain that, if $\grExt^n_{\wgr A_\Gamma}(\wgr\Delta(\gamma),\rnabla(\nu)\langle m\rangle)\not=0$, then $\grExt^n_{u'}(\rDelta(\gamma),\rnabla(\nu)\langle m\rangle)\not=0$. Hence, 
as before, $n=m$.
 \end{proof}

\noindent{\it Scholia}.   In this section and the previous one we have proved results which
enable the ``relativization" of hypotheses on the underlying regular
dominant weight posets $\Gamma$ of most results in \S\S3,4. By
``relativization" we mean replacing any hypothesis that ``The Lusztig
character \ref{cor4.2}ula holds" with $\Gamma$ is stable and the Lusztig character formula holds for
$\Gamma$ (see (\ref{LCF}) for terminology).   We explain how this works for each of the
results in \S\S3,4:

Theorem \ref{Weyl} is already relativized in \cite[Thm. 7.1]{PS13}. In fact, the
formulation does not even require that  that $\Gamma$ be stable; and the LCF is
effectively required only on the poset of non-maximal elements of $\Gamma$. Here it is required that 
$p\geq 2h-2$ is odd.
Theorem \ref{theorem5.3} is not yet fully relativized. Nevertheless, its conclusions hold
without any LCF hypothesis, if $p$ is sufficiently large relative the Ext
degree. This $\Ext^n$ group (in the statement of the theorem)  is generally
interesting only when the cohomological degree $n\leq {\text{\rm gl.dim.}}\,A_\Gamma$. See the proof of Theorem \ref{theorem6.2}. If we assume the LCF
holds for $\Gamma$, this global dimension is at most $2a(\Gamma)$, where $a(\Gamma)$
is the number of alcoves intersecting $\Gamma$ nontrivially. For cohomological degree $n$ at
most this value, the conclusion of the Theroem \ref{theorem5.3} holds if $p>6a(\Gamma)+3h-4$.
We regard such primes $p$ as ``fairly large", but not huge.
Theorem \ref{firstmainconsequencethm5.6} is relativized for $p$ fairly large. Explicitly, $p>6a(\Gamma)+3h-4$
as above. This is established in Theorem \ref{theorem6.2}.
Theorem \ref{filtration6.2} is not yet fully relativized. Nevertheless,  its  $\nabla$-filtration
conclusions hold for $\Ext^n$ with $n\leq 2a(\Gamma)$, provided  $p$ is fairly
large, as above.  Again, if the LCF holds for  $\Gamma$, this includes all cohomological
degrees $n \leq {\text{\rm gl. dim.}} A_\Gamma$. The surjectivity
assertions of the theorem can be proved, under these assumptions on $\Gamma$
and $p$, as in the argument for Theorem \ref{theorem6.2} in \cite{PS13}. 
Theorem \ref{secondmainconsequencethm3.7} is relativized, as proved in Theorem \ref{theorem6.2} in this paper.
Corollary \ref{corollary3.8}, as a special case of Theorem \ref{secondmainconsequencethm3.7}, is also relativized. 
Corollaries \ref{cor4.2} and \ref{cor4.3} are relativized, using Theorem \ref{secondmainconsequencethm3.7} and Corollary
\ref{corollary3.8}
discussed above.
Similarly Proposition \ref{stlin} is relativized. That is, in these results in
\S4, the assumption that the LCF holds may be replaced with the
assumption that $\Gamma$ is stable and the LCF holds on $\Gamma$, as discussed
above. 
All other results in \S4 are stated in an abstract context, and so have
no need of relativization. This concludes our discussion.

\section{Open questions}Many of the results in our ``forced grading program" go back to the paper \cite{PS9},
entitled ``A new approach to the Koszul property using graded subalgebras."  One of the subalgebras in the title is the $p$-regular part $u'_\zeta$ of the small quantum group (associated to $G$).  If $p>h$,  it may
be deduced from \cite[\S\S\S17--18]{AJS} that $u'_\zeta$ is a Koszul algebra.  This fact needed for this deduction is the
validity of the LCF for $u'_\zeta$ when $p>h$.  This is now known for $p>h$ \cite{T},
and the LCF holds also for $u_\zeta$.  Nevertheless, the corresponding Koszulity of $u_\zeta$ is not
known.  

\begin{ques}\label{question1} Assume $p>h$. Is it true that the small quantum enveloping algebra $u_\zeta$ at a
$p$th root of unity is a Koszul algebra?\end{ques}

Curiously,
such an extension of the work of \cite{AJS} already exists in positive characteristic \cite{Riche} (for ``really"
large $p$). But there is no extension 
so far in the (presumably easier) quantum case. 

One can also hope that a similar result holds at
the integral level. 

\begin{ques}\label{question2} Assuming the answer to Question \ref{question1} is positive, does the graded algebra
$u_\zeta$ admit a compatible $\sO$-form as in Theorem \ref{thm2.2} (or in \cite[\S8]{PS11})? \end{ques}

It seems likely that, at least in classical types, these questions might also have positive answers for
some small prime cases, i.~e., $p\leq h$. Positive answers to these questions would likely lead to
obtaining graded, integral quasi-hereditary algebras as in \cite{PS10}, and it could also lead to the results in
\cite{PS13}, as well as to showing the result in \S4 above are  valid in the singular case. 

Finally, we raise

\begin{ques} Do the modules $Z_K(\lambda)$ discussed in
Remark \ref{remark5.3} satisfy the linearity (or Koszul) condition when $\lambda$ is allowed to be
singular? A dual property for the $Z'_K(\lambda)$ should also hold. (This question also appears to be open for the algebraic group schemes $G_1T$ in positive
characteristic, even for very large $p$.) \end{ques}

\section{Appendix: Comparison of gradings}The aim of this appendix is to flesh out part of the argument
for \cite[Prop. 18.21]{AJS} dealing with compatibilities of some $\mathbb Z$-gradings discussed there with 
natural weight gradings. We give two arguments. First, our own, is given after a general discussion of the
context and issues, and deals fairly directly with the weight gradings involved. The second argument, which we
give 
briefly, is our interpretation of the argument sketched in \cite[18.21]{AJS} itself.

 Let $Y$ be an abelian group. The appendix \cite[Appendix E]{AJS} defines 
a $Y$-category to be an additive category $\mathcal C$ equipped with shift functors $\langle \xi\rangle:{\mathcal C}\to{\mathcal C}$, one for
each $\xi\in Y$. Certain natural conditions must be satisfied. In a classical sense, there is also the notion of a $Y$-graded
algebra $A$ and $Y$-graded modules for it, which \cite[\S E.3]{AJS} notes gives rise to a $Y$-category (with obvious shift functors). Moreover,
\cite[E.3]{AJS} defines the notion of a $Y$-generator for an abelian $Y$-category $\mathcal C$ and shows, for
any projective $Y$-generator $P$ that $\End_{\mathcal C}^\sharp(P)$ is a $Y$-graded algebra
(and $E:=\End_{\mathcal C}^\sharp(P)^\op$ is $Y$-graded in the same way).\footnote{ Here
$\Hom_{\mathcal C}^\sharp(M,N):=\bigoplus_{\xi\in Y}\Hom_{\mathcal C}(M,N\langle \xi\rangle)$,  for any pair
 of objects $M,N\in{\mathcal C}$ .   } The original category
$\mathcal C$ is equivalent to the module category $E$-grmod of $Y$-graded $E$-modules. In case $\mathcal C$ is the category of $Y$-graded
modules for a $Y$-graded algebra $A$, then $\Hom_{\mathcal C}^\sharp(M,N)\cong\Hom_A(M,N)$ (ungraded
$\Hom_A$). 

Two important examples, introduced early in \cite{AJS},  of categories graded by an abelian group are the categories
we call $\ssC_k$ (namely, the category of finite dimensional rational $G_1T$-modules) and its quantum analogue, which we call $\ssC_K$; see \cite[\S2.4]{AJS}.\footnote{ Both these categories are denoted
${\mathcal C}_k$ in \cite{AJS}, distinguished only by a ``Case 1'' ($G_1T$ case) or ``Case 2" (quantum analogue case) context.} The abelian group generally used is $Y:=p{\mathbb Z}R$, especially in studying blocks, though
the larger group $pX$ can sometimes be used, where $X=X(T)$
as in \S1 of this paper.  If $M$ is any object in ${\mathcal C}_k$, then $M\otimes p\xi$ is also
in ${\mathcal C}_k$. Setting $M\langle p\xi\rangle:=M\otimes p\xi$ gives ${\mathcal C}_k$ the
structure of a $pX$-category.  These functors do not generally preserve blocks unless $p\xi\in Y$. However, any
block is a $Y$-category.
 
 Of course, $M$ has a classical weight space decomposition---namely, $M=\bigoplus_{\omega\in X}M_\omega$. However, this decomposition of $M$ does not correspond to the weight space decomposition, using a
 graded endomorphism algebra as in the first paragraph above:   Let $P\in{\mathcal C}_k$ be such that
 $P|_u\cong u$ (as left $u$-modules) in Case 1 and $P|_{u_\zeta}\cong u_\zeta$ in Case 2.  For instance,
take $P:=\bigoplus_{\lambda\in X_1(T)} \Phi_k(\lambda)$, where the construction $\Phi_k(-)$ is described in  \cite[2.6 (3)]{AJS}. In Case 1, $\Phi_k(\lambda):=(\ind_T^{G_1T}-\lambda)^*$ is the module obtained by coinducing from $T$ to $G_1T$  the one-dimensional $T$-module defined by $\lambda$, and there is an analogous
 construction in Case 2.  (Here $X_1(T)$ could be replaced by any collection $X_1'$ of coset representatives of $pX$ in $X$.)
We have
 $\Hom_{{\mathcal C}_k}(\Phi_k(\lambda), M)\cong M_{\lambda}$, $\lambda\in X_1(T)$. 
 The module $P$ is a projective $pX$-generator for ${\mathcal C}_k$ in the sense of paragraph 1 above, giving an equivalence of ${\mathcal C}_k$ with the category of $pX$-graded modules for a $pX$-graded
 algebra $E=\End^\sharp_{{\mathcal C}_k}(P)^\op$. The equivalence is given by $M\mapsto \Hom_{{\mathcal C}_k}^\sharp(P,M)$. The latter module is isomorphic to $M$ as a vector space and has decomposition
 $M=\bigoplus_{\theta\in X}M(p\theta)$ into $pX$-grades. We find that, in terms of the original weight
 space decomposition of $M$, that 
   $M(p\theta)=\bigoplus _{\lambda\in X_1(T)}M_{\lambda+p\theta}$. Notice that the $X$-weight spaces do {\it not} correspond exactly to the $pX$-weight
spaces (which are made up of a sum of many of the former weight spaces).  This suggests we cannot just use the results
on graded categories in \cite[Appendix E]{AJS} to obtain $X$-weight space compatibility with the $\mathbb Z$-gradings in 
\cite[\S\S17,18]{AJS}.\footnote{One comes closer by thinking, in Case 1, of fully embedding the category of 
$G_1T$-modules into the category of rational modules for the semidirect product $G_1.T:=G_1\rtimes T$ 
(using the group scheme surjection $G_1. T\twoheadrightarrow G_1T$). Modules for $G_1. T$ are just
$X$-graded modules for the $X$-graded algebra $u$. In this category, it is possible to make sense of a
tensor product $M\otimes\xi$ for any $\xi\in X$, thus fully embedding the $Y$-category ${\mathcal C}_k$
into an $X$-category in a natural way. See \cite[18.20]{AJS} which takes a similar approach for blocks.}

However, the compatibility exists just the same, at least for modules $M$ whose composition factors have $p$-regular highest weights. Before discussing this, we give more details on how the above formalism works for blocks. Let
${\mathcal C}_k(\Omega)$ be the block of ${\mathcal C}_k$ associated to a $W_p$-orbit $\Omega$ of $p$-regular
weights. (We will not use the $p$-regularity in this paragraph.) All the weights of any $M$ in ${\mathcal C}_k(\Omega)$, $\Omega=W_p\cdot\omega$, belong to a single coset of $\omega+{\mathbb Z}R$ of the root lattice ${\mathbb Z}R$. Working with any prime $p$ relatively prime to $[X:{\mathbb Z}R]$ (such as any prime
$p>h$), choose a set of coset representatives $X_1'$ for $pX$ in X such that $X_1'\subseteq \omega+{\mathbb Z}R$. Construct $P'=\bigoplus_{\lambda\in X'_1} \Phi_k(\lambda)$, analogous to the module $P$
above. Let $P'_\Omega$ be the projection of $P'$ onto the block ${\mathcal C}_k(\Omega)$. Then $P'_\Omega$ is a $Y$-generator for the $Y$-category ${\mathcal C}_k(\Omega)$. Also, $P'_\Omega=
\bigoplus_{\lambda\in X'_1}\Phi_\Omega(\lambda)$, where $\Phi_\Omega(\lambda)$ is the projection of $\Phi_k(\lambda)$ onto ${\mathcal C}_k(\Omega)$. Let  $Q_k(\nu)$, be the projective cover of the irreducible module 
$L_k(\nu)$, $\nu\in\Omega$; see  \cite[\S4.15]{AJS}. The module $\Phi_\Omega(\lambda)$ is just the direct sum of copies of the $Q_k(\nu)$, $\nu\in\Omega$, each appearing with the same multiplicity (possibly zero) that $Q_k(\nu)$ appears as a summand of $\Phi_k(\lambda)$.  For any module $M$ in ${\mathcal C}_k(\Omega)$ and
any $\lambda\in X'_1$, we 
have
\begin{equation}\label{formula}
\Hom_{{\mathcal C}_k(\Omega)}(\Phi_\Omega(\lambda),M)\cong\Hom_{{\mathcal C}_k}(\Phi_k(\lambda),M)\cong M_\lambda.\end{equation}
We can recover any weight space $M_\nu$ of $M$ by writing $\nu=\lambda+p\theta$, for some
$\lambda\in X_1'$ and some $\theta\in{\mathbb Z}R$. (The fact that this can be done depends heavily
on the construction of $X'_1$
described above: choose $\lambda\in X'_1$ so that $\nu\in\lambda+ pX$. Since $X'_1$ and $\nu$ belong
to the same coset of ${\mathbb Z}R$, we have $\lambda-\nu\in pX\cap {\mathbb Z}R=p{\mathbb Z}R$ 
here.) Then
\begin{equation}\label{formula2}
\Hom_{{\mathcal C}_k(\Omega)}(\Phi_\Omega(\lambda),M\langle -p\theta\rangle)\cong M_{\lambda+p\theta}=M_\nu.\end{equation}
That is, we have completely recovered the $X$-grading of $M$ using the $Y$-category ${\mathcal C}_k(\Omega)$.  

Next, consider the issue of passing from $Y$-compatibility of a $\mathbb Z$-graded version of ${\mathcal C}_k(\Omega)$ to $X$-compatibility. \cite[\S18]{AJS} introduced various 
objects in ${\mathcal C}_k(\Omega)$ which have ``graded forms," putting them in an 
associated $Y\times{\mathbb Z}$-category $\widetilde{\mathcal C}_k(\Omega)$. Among these
are the projective covers $Q_k(\lambda)$ of the irreducible module 
$L_k(\lambda)$, $\lambda\in\Omega$; see  \cite[\S18.16]{AJS}. This gives a $Y\times\mathbb Z$-graded 
version $\widetilde Q_k(\lambda)$ in $\widetilde{\mathcal C}_k(\Omega)$ of $Q_k(\lambda)$.  For $\lambda\in X'_1$, define $\widetilde\Phi_\Omega(\lambda)
=\bigoplus_{\nu\in\Omega} \widetilde Q_k(\nu)^{\oplus[\Phi_k(\lambda):Q_k(\nu)]}$. Thus, $\Phi_\Omega(\lambda)$ is obtained from $\widetilde\Phi_\Omega(\lambda)$ in $\widetilde{\mathcal C_k}(\Omega)$
by forgetting the $\mathbb Z$-grading.  We still have the isomorphism (\ref{formula}) and (\ref{formula2}),
when $\Phi_\Omega(\lambda)$ is replaced with any isomorphism copy, such as that obtained by forgetting
the $\mathbb Z$-grading on $\widetilde\Phi_\Omega(\lambda)$. 

Now let $M$ in (\ref{formula}) and (\ref{formula2}) be any object in ${\mathcal C}_k(\Omega)$ obtained
by forgetting the $\mathbb Z$-grading on an object in $\widetilde M\in\widetilde{\mathcal C}_k(\Omega)$. Then
the $\nu$-weight space of $M$ decomposes as a direct sum 
$$\bigoplus_{n\in\mathbb Z}\Hom_{\widetilde{\mathcal C}_k(\Omega)}(\widetilde\Phi_\Omega(\lambda),
\widetilde M\langle (-p\theta,-n)\rangle).$$
This establishes the needed $X$-compatibility from the $Y$-compatibility.

This passage from $Y$-compatibility to $X$-compatibility may be used in the discussion immediately
above \cite[Prop. 18.21]{AJS} to complete the proof of that proposition and its quantum analogue.  Essentially,
we agree that the algebra $\End^\sharp_{{\mathcal C}}(P)^\op$ discussed there is both $Y\times \mathbb Z$-graded
and $X$-graded. However, it seems to us that additional detail is required to make it $X\times\mathbb Z$-graded, and a version of this has been supplied above. A second way to do this, perhaps implicit in \cite{AJS}, is to use the direct sum decomposition \cite[18.20]{AJS} to define an $X\times\mathbb Z$-categroy, using shifts provided between summands. This process gives an $X\times\mathbb Z$-grading on $\End^\sharp_{\mathbb Z}(P)^\op$, and also leads to the desired $X$-compability.

\end{document}